\documentclass[openany, 12pt]{amsart} 

\usepackage{geometry} 
\usepackage{etoolbox}

\patchcmd{\tocchapter}{#1}{\MakeUppercase{#1}}{}{}

\makeatletter
\patchcmd{\@tocline}
  {\hfil}
  {\leaders\hbox{\,.\,}\hfil}
  {}{}

\newcommand{\R}{\mathbb{R}}  
\newcommand{\Z}{\mathbb{Z}}
\newcommand{\N}{\mathbb{N}}
\newcommand{\Q}{\mathbb{Q}}
\newcommand{\C}{\mathbb{C}}

\newcommand{\F}{\mathbb{F}}

\usepackage{mathrsfs}
\usepackage{hyperref}
\hypersetup{
    colorlinks=false,
    linkcolor=black,
    filecolor=blue,      
    urlcolor=blue,
    citecolor=black
    }
\DeclareFontFamily{U}{mathx}{}
\DeclareFontShape{U}{mathx}{m}{n}{<-> mathx10}{}
\DeclareSymbolFont{mathx}{U}{mathx}{m}{n}
\DeclareMathAccent{\widehat}{0}{mathx}{"70}
\DeclareMathAccent{\widecheck}{0}{mathx}{"71}
\usepackage[table, dvipsnames]{xcolor}
\usepackage{booktabs} 
\usepackage{array} 
\usepackage{paralist} 
\usepackage{verbatim} 
\usepackage{graphicx} 
\usepackage{tikz-cd}
\usepackage{siunitx}
\usepackage{amsfonts}
\usepackage{amssymb}
\usepackage{enumitem}
\usepackage{moreenum}
\usepackage{mathtools}
\usepackage{upgreek}
\usepackage{physics}
\usepackage{amssymb,amscd,amsmath,amsthm,url}
\usepackage{thmtools}
\usepackage{caption}
\usepackage{subcaption}
\usepackage{esint}
\usepackage{sseq}
\usepackage[nameinlink]{cleveref}
\usepackage{needspace}

\numberwithin{equation}{section}

\usepackage{cancel}

\newcommand{\pd}{\partial}
\newcommand{\can}{\mathrm{can}}
\newcommand{\pre}{\mathrm{pre}}
\newcommand{\w}{\omega}
\newcommand{\std}{\mathrm{std}}

\newcommand{\SL}{\textup{SL}}

\newcommand{\im}{\operatorname{im}}

\setlist[itemize]{leftmargin=*}

\renewcommand{\tilde}{\widetilde}
\renewcommand{\hat}{\widehat}
\usepackage{stmaryrd}

\newcommand{\ECH}{\mathrm{ECH}}

\numberwithin{equation}{section}

\newtheorem{theorem}{Theorem}[section]
\newtheorem{proposition}[theorem]{Proposition}
\newtheorem{corollary}[theorem]{Corollary}
\newtheorem{lemma}[theorem]{Lemma}

\theoremstyle{definition}
\newtheorem{definition}[theorem]{Definition}
\newtheorem{remark}[theorem]{Remark}
\newtheorem{example}[theorem]{Example}
\newtheorem{question}[theorem]{Question}
\newtheorem{assumption}[theorem]{Assumption}

\usepackage{adjustbox}

\usepackage{csquotes}

\definecolor{mypink2}{RGB}{255, 186, 239}
\definecolor{myyellow}{RGB}{255, 227, 89}
\definecolor{mygreen}{RGB}{178, 252, 141}

\makeatletter
\def\l@subsection{\@tocline{2}{0pt}{2.5pc}{5pc}{}}
\makeatother

\begin{document}
\title{Infinite ECH Capacities and Anosov Flows}

\author{Gabriel Beiner}
\address{Gabriel Beiner, 935 Evans Hall, University of California, Berkeley}
\email{gabriel.beiner@berkeley.edu}
\begin{abstract}
	This article relates the theory of embedded contact homology (ECH) with the dynamics of Anosov flows. We show that in many cases the ECH capacities of a symplectic 4-manifold are infinite, including cotangent disk bundles over closed oriented surfaces of genus at least two. We prove that ECH obstructs Reeb Anosov and Hamiltonian Anosov flows, addressing the four-dimensional case of a question posed by Herman in 1998. Further, we obtain Floer-theoretic obstructions to a 3-manifold admitting any Anosov flow. As an application, we give new constraints on the existence of embedded Lagrangians of genus at least two in symplectic 4-manifolds.  In an appendix, some related results in all dimensions are proved for capacities constructed from rational symplectic field theory.
\end{abstract}
\maketitle

{ \setcounter{tocdepth}{2}
\tableofcontents  }

\section{Introduction}
Given a four-dimensional Liouville domain $(X,\w)$, Hutchings \cite{HutQ} introduced a sequence of associated symplectic invariants, called the \emph{ECH capacities}:
\[0 < c_1^{\ECH}(X,\w) \leq c_2^{\ECH}(X,\w)\leq c_3^{\ECH}(X,\w)\leq \cdots \leq +\infty.\]
These capacities are defined using spectral invariants obtained from the embedded contact homology (ECH) of the boundary of $X$. That is, we associate to a non-degenerate contact 3-manifold $(Y,\lambda)$ its \emph{ECH spectrum}:
\[0 < c_1(Y,\lambda) \leq c_2(Y,\lambda)\leq c_3(Y,\lambda) \leq \cdots \leq +\infty,\]
given as the minimal action needed to represent certain classes in $ECH(Y,\lambda)$; the ECH capacities of $(X,\w)$ are defined to agree with the ECH spectrum of the contact-type boundary of $X$. They are ``capacities" in the sense that each invariant  $c_k^{\mathrm{ECH}}$ scales linearly with rescaling the symplectic form, and if there exitsts a symplectic embedding $\varphi: (X,\w) \hookrightarrow (X',\w')$, then $c^\ECH_k(X,\w)\leq c_k^{\ECH}(X',\w')$ for all $k$.

The ECH capacities give many powerful insights into four-dimensional symplectic embedding problems \cite{CCGFHR, CG}. The capacities (when finite), and a more general family of ECH spectral invariants, are also known to satisfy a ``Weyl law" \cite{Weyl} which is instrumental in the proofs of groundbreaking results on the dynamics of three-dimensional Reeb flows (e.g. \cite{CGH, Iri, CGP}).  

This paper concerns some relations between ECH capacities and the theory of Anosov flows. More precisely, we will show that finiteness of the ECH capacities obstructs the existence of Anosov flows in various settings. This will in turn allow us to determine that the ECH capacities are infinite in many new cases.

\subsection{Statement of main results}\label{ssec:mainresult}
In order to study symplectic embeddings and dynamics in dimension four, it is an important problem to compute the ECH capacities for a given Liouville domain. Examples where the capacities have been determined include convex and concave toric domains \cite{HutQ, CCGFHR}, and the unit cotangent disk bundles of $S^2$ \cite{FR, FRV}, $\R P^2$ \cite{FR}, $T^2$ \cite{HutQ}, and the Klein bottle \cite{MR} with respect to standard metrics. In general, it may be difficult to even determine if the ECH capacities are finite---this has been an open question for the cotangent disk bundles of all higher genus surfaces. 
Our first main result concerns a simple condition under which all ECH capacities must be infinite; in particular, it implies the capacities are infinite for all cotangent disk bundles over oriented surfaces of genus at least two. We begin by recalling some standard terminology.

\begin{definition}
Consider closed (possibly disconnected) contact manifolds $(Y_+, \lambda_+)$ and $(Y_-,\lambda_-)$ and a symplectic manifold $(X,\w)$ with oriented boundary $\pd X = Y_+\sqcup -Y_-$.

\begin{itemize}
	\item $(X,\w)$ is a \emph{strong symplectic cobordism} from $(Y_+,\lambda_+)$ to $(Y_-,\lambda_-)$ if $\w$ agrees with $d\lambda_{\pm}$ on $Y_{\pm}$.
	\item $(X,\w)$ is a \emph{weakly exact symplectic cobordism} from $(Y_+,\lambda_+)$ to $(Y_-,\lambda_-)$ if it is a strong symplectic cobordism and $\w$ is exact.
	\item $(X,\w)$ is an \emph{exact symplectic cobordism}  if $\w$ has a global primitive $\lambda$, called a \emph{Liouville form}, which restricts to $\lambda_\pm$ on $Y_\pm$.
	 \item  $(X,\w)$ is a \emph{weak symplectic cobordism} from $(Y_+, \xi_+)$ to $(Y_-,\xi_-)$ if $\w|_{\xi_{\pm}}> 0$. 
	 \end{itemize}
In each case, we call $Y_+$ (respectively $Y_-$) the \emph{convex} (resp.\!\! \emph{concave}) boundary of $X$. When a cobordism of one of the above types has empty concave boundary, it is called a \emph{filling}. A (weakly) exact filling is also called a \emph{(weak) Liouville domain}. When a strong cobordism has empty convex boundary, we call it a \emph{strong symplectic cap}. When we speak of strong or  exact cobordisms for  $(Y_\pm,\xi_\pm)$, we mean there are contact forms $\lambda_\pm$ for $\xi_\pm$ for which the corresponding cobordism is strong or  exact.
\end{definition}

\subsubsection{Capacities of domains with disconnected boundary}
Our first theorem concerns the ECH capacities of a Liouville domain with disconnected boundary.

\begin{theorem}
\label{thm:inf_spec_invts}
Suppose $(X,\w)$ is a 4-dimensional connected weak Liouville domain with disconnected boundary. Then the ECH spectrum of each boundary component and the ECH capacities of $(X,\w)$  are infinite.
\end{theorem}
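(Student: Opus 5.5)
Write $\partial X = Y_1 \sqcup Y_2 \sqcup \dots \sqcup Y_m$ with $m \ge 2$, and fix one component $Y = Y_1$ with contact form $\lambda$. The plan is to show that the ECH contact class of $Y$, and more generally every class in $U$-towers detected by the spectrum, vanishes, or at least that no nonzero class is reached by the relevant $U$-maps. The ECH capacities are then all $+\infty$ by convention.

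\textbf{Step 1: view $X$ as a weakly exact cobordism.} Regard $X$ as a weakly exact cobordism from $(Y_1,\lambda)$ to the empty set, or, after reversing the roles of the other components, as a cobordism from $Y$ to $\emptyset$ whose "other boundary" is also convex. The key input is that a connected domain with disconnected convex boundary gives a cobordism map
\[
\Phi: \ECH(Y_1 \sqcup \dots \sqcup Y_m) \cong \bigotimes_i \ECH(Y_i) \to \ECH(\emptyset) = \F.
\]
This map sends the tensor product of the contact classes $\otimes_i c(Y_i)$ to $1$. Here I use Hutchings–Taubes/Seiberg–Witten cobordism maps, which are also defined for weak fillings after perturbing to strong near the boundary, and the fact that ECH of a disjoint union is the tensor product.

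\textbf{Step 2: use the $U$-maps.} A finite spectral number $c_k(Y,\lambda)$ requires a class $\sigma \in \ECH(Y)$ with $U^k\sigma = [\emptyset]$. Because $X$ is connected, the $U$-maps associated to points in different boundary components are intertwined by the cobordism map. The $U$ map of a cobordism depends only on the chosen point up to homotopy, and $X$ is connected, so
\[
\Phi\circ(U_{Y_1}\otimes 1) = \Phi\circ(1\otimes U_{Y_2}).
\]
Now take $\sigma$ with $U_{Y_1}^k\sigma = [\emptyset_1]$ and apply $\Phi$ to $\sigma \otimes [\emptyset_2]\otimes\cdots$. This gives
\[
1 = \Phi(U^k_{Y_1}\sigma\otimes[\emptyset]\otimes\cdots) = \Phi(\sigma\otimes U^k_{Y_2}[\emptyset_2]\otimes\cdots) = 0,
\]
since $U[\emptyset]=0$ for $k \ge 1$. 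This is a contradiction, so no such $\sigma$ exists. For $k=0$ the same argument applies once $c_0=0$ is excluded by convention.

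\textbf{Step 3: conclude.} It follows that $c_k(Y_i,\lambda_i)=\infty$ for all $k\ge1$ and all $i$. The ECH capacities of $X$ are defined from the full boundary, where the same identity applies to $\partial X$ itself, so they are infinite as well.

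\textbf{Main obstacle.} The hardest part is rigorously justifying Step 1 and the $U$-map compatibility for weak and merely weakly exact fillings, including the case where the boundary components have nontrivial $b_1$ and twisted coefficients. I would do this through the isomorphism between ECH and Seiberg–Witten Floer cohomology, where the cobordism maps and the point-class action are standard. I would also use the fact that a weak filling can be deformed to a strong one near the boundary when $\omega$ is exact.
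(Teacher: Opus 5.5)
Your argument is correct, and it takes a genuinely different route from the paper's.

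\textbf{The paper's route.} The paper proves the more general \Cref{thm:strong_inf} quantitatively. Assuming $c_1(Y_1,\lambda_1)=L<\infty$, it uses a collar of $Y_2$ only as room to embed an irrational ellipsoid $E(2L,2L+\varepsilon)$. The filtered map $\Phi^L(X_R\setminus E,\w,0)$ intertwines $U_{Y_1}$ with $U_E$, which forces $c(\pd E,\xi_{\std})\in\im(U_E^L)$ on $ECH^L(\pd E)\cong\F_2$. This is impossible for action reasons.

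\textbf{Your route.} You use the second component algebraically instead. The point constraint can be moved from $Y_1$ to $Y_2$ through the connected cobordism, and $U_{Y_2}$ kills $\sigma\otimes c(Y_2,\xi_2)\otimes\cdots$ because $U c(Y_2,\xi_2)=0$. This avoids the filtered cobordism maps and the ellipsoid, and works entirely in unfiltered ECH, so independence of the contact form is automatic. When the concave end $Y_-$ is nonempty, it gives \Cref{thm:strong_inf} in one line: $c(Y_-,\xi_-)=\Phi(c(Y_+,\xi_+))=0$ whenever $c(Y_1,\xi_1)\in\im U$. The paper's version only ever intertwines one convex component with one concave component. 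In exchange, it exhibits the obstruction as a holomorphic curve violating the action filtration, which is the picture it reuses for RSFT.

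\textbf{Step to tighten.} Your identity $\Phi\circ(U_{Y_1}\otimes 1)=\Phi\circ(1\otimes U_{Y_2})$ is asserted for a map to $ECH(\emptyset)$. The compatibility the paper records pairs a convex component with a concave one, so delete a small Darboux ball $B$ and use the connected cobordism $W=X\setminus B$. Then $\Phi_W\circ U_{Y_1}=U_{\pd B}\circ\Phi_W=\Phi_W\circ U_{Y_2}$, which gives
\[
0\neq c(S^3,\xi_{\std})=\Phi_W\big(U_{Y_1}(\sigma\otimes c(Y_2,\xi_2)\otimes\cdots)\big)=\Phi_W\big(\sigma\otimes U c(Y_2,\xi_2)\otimes\cdots\big)=0.
\]
Here the contradiction genuinely comes from $Y_2$: unfiltered, $c(S^3,\xi_{\std})$ does lie in $\im U$. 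That is exactly why the paper, using only $U_{Y_1}$ and $U_E$, needs the filtration.

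\textbf{Minor slips.}
\begin{itemize}
\item The contact class of $Y_1$ does not vanish; it is nonzero by fillability. What you prove is that it is not in $\im U$.
\item $k=1$ suffices, since $c_k\geq c_1$.
\item A weak Liouville domain is already a strong filling with exact form, so no weak-to-strong deformation is needed.
\item For the capacities of $X$, a preimage of $[\emptyset]$ in $ECH(\pd X)$ need not be a pure tensor. Conclude instead via $c_k(\pd X)\geq c_k(Y_1,\lambda_1)$ from \eqref{eq:specdisj}.
\end{itemize}
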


We note that \Cref{thm:inf_spec_invts} will follow purely from  formal properties of ECH. Nevertheless, it can be applied to yield a number of results exposing connections between Anosov dynamics and symplectic and contact geometry.

\subsubsection{Anosov Liouville domains}
The main examples of Liouville domains with disconnected boundary have the following form. We say that a closed $3$-manifold $Y$ admits a \emph{Liouville pair} $(\alpha_+, \alpha_-)$ if $\alpha_+$ and $\alpha_-$ are respectively positive and negative contact forms on $Y$ and 
\[\lambda = e^{-t}\alpha_- + e^{t}\alpha_+\]
is a Liouville form on $\R_t\times Y$. For $N$ large enough, $[-N,N]\times Y$ is a Liouville filling of $(Y,  \ker \alpha_+) \sqcup (-Y,  \ker \alpha_-)$. \Cref{thm:inf_spec_invts} implies the ECH spectra of the contact structures defined by either element of a Liouville pair are infinite.

Many examples of three-dimensional Liouville pairs come from the theory of Anosov flows. A closed oriented 3-manifold $Y$ admits an \emph{Anosov flow} $\psi^t$ if there is a pair of one-dimensional ``stable" and ``unstable" foliations on $Y$ transverse to the flow whose tangent vectors exponentially contract and expand under $\psi^t$; a precise definition is given in \Cref{ssec:Ano}. We will always consider our flows to be smooth. We will also almost always assume our Anosov flows are oriented.
\begin{definition}
	We say an Anosov flow on $Y$ is \emph{oriented Anosov} if its stable and unstable foliations are orientable and given orientations compatible with that of $Y$.
\end{definition}
Given an oriented Anosov flow on $Y$, Eliashberg--Thurston \cite{ET} and Mitsumatsu \cite{Mit} show that $Y$ admits a pair of contact structures $(\xi_+ = \ker \alpha_+, \xi_- = \ker \alpha_-)$ whose transverse intersection $\xi_+ \pitchfork\xi_-$ spans the flow, and so that $(\alpha_+,\alpha_-)$ is a  Liouville pair. Following \cite{CLMM}, we call the corresponding domain $I\times Y$, which gives an exact filling of $(Y,\xi_+)\sqcup (-Y,\xi_-)$, an \emph{Anosov Liouville domain}. 

While the general construction of Anosov Liouville domains is due to Mitsumatsu \cite{Mit},  some explicit examples of these domains were described earlier by McDuff \cite{McD} and Geiges \cite{Gei}. The details of these constructions are given in \Cref{sec:defs}. For any contact structure on the boundary of an Anosov Liouville domain, \Cref{thm:inf_spec_invts} will imply its ECH spectrum is infinite (see \Cref{cor:inf_spec}).

	For a genus $g$ surface $\Sigma_g$, the standard symplectic form $\w_\can=\dd\lambda_\can$ on $T^\ast\Sigma_g$ determines, for any metric, a Liouville structure on the associated cotangent disk bundle $D^\ast\Sigma_g$. The boundary of this domain is the cotangent sphere bundle $S^\ast\Sigma_g$ with its standard contact structure $(S^\ast\Sigma_g, \xi_\can)$. Whenever the surface $\Sigma_g$ is hyperbolic, McDuff's Anosov Liouville domain construction \cite{McD} gives a Liouville domain $I\times S^\ast\Sigma_g$ with boundary $(S^\ast \Sigma_g, \lambda_\can)\sqcup (-S^\ast\Sigma_g, \lambda_\pre)$, where $\lambda_\pre$ is the prequantization contact form on the circle bundle. We deduce the following fact from \Cref{thm:inf_spec_invts}.

\begin{theorem}\label{thm:cot_inf}
The ECH capacities of a cotangent disk bundle $(D^\ast \Sigma_g, \w_\can)$  over a surface of genus at least two are infinite.
\end{theorem}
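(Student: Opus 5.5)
The plan is to reduce to \Cref{thm:inf_spec_invts} by way of McDuff's Anosov Liouville domain together with the invariance of ECH capacities under the choice of metric. First fix a hyperbolic metric on $\Sigma_g$, which exists since $g\geq 2$. By McDuff's construction, recalled above, there is an exact symplectic cobordism $I\times S^\ast\Sigma_g$ whose boundary is $(S^\ast \Sigma_g, \lambda_\can)\sqcup (-S^\ast\Sigma_g, \lambda_\pre)$. This is a connected Liouville domain with disconnected boundary, so \Cref{thm:inf_spec_invts} applies: the ECH spectrum of the boundary component $(S^\ast\Sigma_g,\lambda_\can)$ is identically $+\infty$.

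Since the ECH capacities of $(D^\ast\Sigma_g,\w_\can)$ are defined to agree with the ECH spectrum of its contact-type boundary $(S^\ast\Sigma_g,\lambda_\can)$, every $c_k^{\ECH}(D^\ast\Sigma_g)$ is infinite for the hyperbolic metric. The one point needing care is that both contact manifolds carry the same contact form $\lambda_\can$ for the same metric. In McDuff's construction the convex end should be literally the unit cotangent bundle with the restriction of $\lambda_\can$, up to rescaling. If instead it only matches up to a positive function, we use the fact that the ECH spectrum of $(Y,f\lambda)$ is infinite if and only if that of $(Y,\lambda)$ is, since $c_k$ is monotone under $f\lambda$ versus $\lambda$ and scales.

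For an arbitrary metric $h$ we compare with the hyperbolic metric $h_0$. Since $\Sigma_g$ is compact, there is a constant $C>0$ with $C^{-1}h_0\le h\le C h_0$, so the unit codisk bundles nest:
\[ D^\ast_{h_0}\Sigma_g \subset D^\ast_{C'h}\Sigma_g , \]
where $D^\ast_{C'h}$ denotes a fiberwise rescaling of $D^\ast_h$ by a constant. Fiberwise scaling by $c$ multiplies $\w_\can$ by $c$, so conformality and monotonicity of the capacities give
\[ \infty = c_k^{\ECH}(D^\ast_{h_0}\Sigma_g)\le c\, c_k^{\ECH}(D^\ast_h\Sigma_g). \]
Hence the capacities are infinite for every metric.

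The main obstacle is bookkeeping rather than substance: identifying the convex boundary of McDuff's domain with the canonical unit cotangent bundle form, including any conformal factor, and checking that "ECH capacities" is taken via the Liouville domain's boundary spectrum so that \Cref{thm:inf_spec_invts} transfers directly.
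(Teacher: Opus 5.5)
Your proposal is correct and follows the paper's argument. McDuff's Anosov Liouville domain $I\times S^\ast\Sigma_g$ together with \Cref{thm:inf_spec_invts} gives $c_k(S^\ast\Sigma_g,\lambda_\can)=\infty$, and your nesting-and-rescaling step for arbitrary metrics is exactly the paper's subsequent remark on metric independence; your concern about the precise contact form on the convex end is harmless, since finiteness of the ECH spectrum depends only on the contact structure up to isotopy.
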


\begin{proof}
	The ECH capacity $c_k^{\ECH}(D^\ast \Sigma_g, \w_\mathrm{can})$ is defined to agree with the $k$th ECH spectral invariant $c_k(S^\ast\Sigma_g, \lambda_\mathrm{can})$ of the boundary. McDuff's construction of an exact filling of $(S^\ast\Sigma_g,\lambda_\can)\sqcup (-S^\ast\Sigma_g, \lambda_{\pre})$ together with \Cref{thm:inf_spec_invts} implies $c_k(S^\ast\Sigma_g, \lambda_\mathrm{can})=\infty$. 
\end{proof}

\begin{remark}
	The statement that the ECH capacities of a cotangent disk bundle $D^\ast \Sigma_g$ are finite or infinite is independent of the choice of Riemannian or Finsler metric used to define the unit disk. Indeed, any disk bundle can be rescaled to fit in any other, so the capacities must be infinite for any tubular neighbourhood of the zero-section in $T^\ast\Sigma_g$ considered as a symplectic sub-domain. 
\end{remark}

\subsubsection{Anosov contact manifolds}
Even given an explicit Anosov flow, it may be difficult to tell which contact structures bound an Anosov Liouville domain. We study another family of contact structures coming from Anosov dynamics which always have infinite ECH spectrum.

\begin{definition}
	An \emph{Anosov contact 3-manifold} is a closed contact 3-manifold $(Y,\xi)$ admitting a contact form whose Reeb flow is oriented Anosov.
\end{definition}

The basic example of an Anosov contact manifold is the cotangent circle bundle $(S^\ast \Sigma_g, \xi_\can)$ of an oriented hyperbolic surface. More Anosov contact manifolds can be constructed explicitly thanks to a surgery technique of Foulon--Hasselblatt \cite{FH}. Broad existence results and construction methods also follow from the recent work of Marty \cite{Marty}. Several authors have used Floer theory and symplectic field theory to obstruct and describe these Reeb Anosov flows \cite{FHV, MP, CP}. Hozoori \cite{Hoz20} asked how the existence of a Reeb Anosov flow for a contact structure restricts its contact topology; the following result constitutes some progress in this direction. The statement involves a distinguished element $c(Y,\xi) \in ECH(Y,\xi)$ called the \emph{contact invariant}; its definition is given in \Cref{ssec:ECHprops}.

\begin{theorem}
\label{thm:AnoReeb}
	Suppose $(Y,\xi)$ is an Anosov contact 3-manifold. Then it has non-vanishing contact invariant, but the ECH spectrum of $Y$ is infinite for any contact form for $\xi$ and the ECH capacities of any weakly exact filling of $(Y,\xi)$ are infinite.
	\end{theorem}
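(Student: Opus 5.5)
The plan is to reduce the statement to \Cref{thm:inf_spec_invts} by placing $(Y,\xi)$ as one boundary component of a Liouville domain with disconnected boundary. Let $\alpha$ be a contact form for $\xi$ whose Reeb flow $R_\alpha$ is oriented Anosov. The first step is to build a Liouville pair $(\alpha_+,\alpha_-)$ on $Y$ with $\ker\alpha_+=\xi$. For this, use the Anosov splitting $TY=\langle R_\alpha\rangle\oplus E^s\oplus E^u$. For a Reeb flow the stable and unstable bundles lie in $\xi=\ker\alpha$, since $\alpha$ is preserved by the flow while vectors in $E^s\oplus E^u$ shrink under forward or backward iteration. Set $\alpha_-:=\alpha_u-\alpha_s$ (up to sign conventions), where $\alpha_s,\alpha_u$ are $1$-forms vanishing on $\langle R\rangle\oplus E^u$ and on $\langle R\rangle\oplus E^s$ respectively. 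Mitsumatsu's construction (as in \cite{Mit}, with smoothing as in \cite{Hoz20}) shows that, after averaging along the flow, the forms $\alpha_u\pm\alpha_s$ are a positive and a negative contact form with $d(e^{t}\alpha_+ + e^{-t}\alpha_-)$ symplectic. The key point I would check is that one may take $\alpha_+$ to be the Reeb form $\alpha$ itself, or at least a form isotopic through contact forms to $\xi$. Concretely, I would verify the Liouville condition $\alpha_+\wedge d\alpha_+ + \alpha_-\wedge d\alpha_- +(\alpha_+\wedge d\alpha_-+\alpha_-\wedge d\alpha_+)$-type positivity for the pair $(\alpha, \alpha_u-\alpha_s)$, using $d\alpha(E^s,E^u)\neq 0$ and $\mathcal L_R\alpha_-$ controlled by the expansion rates. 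If it fails literally, I would use Gray stability: the pair $(\alpha_u+\alpha_s,\alpha_u-\alpha_s)$ has $\ker(\alpha_u+\alpha_s)$ homotopic through contact structures to $\xi$, since both contain the stable/unstable mixing directions positively. This step is the main obstacle: ensuring the positive contact structure of the Anosov Liouville pair is exactly $\xi$ and not merely some contact structure supporting the flow.

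Granting this, $[-N,N]\times Y$ is an exact filling of $(Y,\xi)\sqcup(-Y,\xi_-)$, which is connected with disconnected boundary. Then \Cref{thm:inf_spec_invts} gives that the ECH spectrum of $(Y,\xi)$ is infinite for the contact form induced on the boundary. To pass to an arbitrary contact form $\lambda=f\alpha_+$, I would use that the ECH spectrum scales and is monotone: $c_k(Y,f\alpha_+)\ge (\min f)\, c_k(Y,\alpha_+)=\infty$, by monotonicity under the exact cobordism given by a region in the symplectization.

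For a weakly exact filling $W$ of $(Y,\xi)$, glue $W$ to the concave-free side? No; instead observe that $c_k^{\ECH}(W)$ is defined as $c_k$ of the boundary with the ECH class being the image of the empty-set generator, so the definition reduces to the boundary spectrum computed above. Alternatively, glue $W$ along $Y$ to the Anosov Liouville domain's convex end is impossible since both are fillings. So I would rely directly on the fact that ECH capacities of weakly exact fillings are defined via the boundary spectrum, as the paper states, and use the previous paragraph.

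For nonvanishing of the contact invariant, I would note that $(Y,\xi)$ is exactly fillable by the Anosov Liouville domain's boundary component. More precisely, cap off $-Y$ is not available, but strong fillability of one component of a disconnected filling is enough: the cobordism map induced by the Liouville domain sends $c(Y,\xi)\otimes c(-Y,\xi_-)$ to the generator $1$ of ECH of the empty set. Hence $c(Y,\xi)\neq0$.
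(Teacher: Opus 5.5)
The gap is the step you flag as the main obstacle. Nothing in the proposal shows that $\xi$, even up to isotopy, is the positive end of a Liouville pair or of any exact or strong co-filling, and neither suggested fix works as stated.

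\emph{The direct verification.} Take the pair $(\alpha,\beta)$ with $\beta=\alpha_u+\alpha_s$ (the negative form in the paper's conventions, so $\beta(R)=0$). Expanding $\big(d(e^t\alpha+e^{-t}\beta)\big)^2$ on the frame $(R,e_s,e_u)$ shows that the Liouville condition is the pointwise inequality
\[
d\beta(e_s,e_u)+2\sqrt{d\alpha(e_s,e_u)\,(r_u-r_s)}>0, \qquad d\beta(e_s,e_u)=-\beta([e_s,e_u]).
\]
The bracket term is not controlled by the expansion rates or by $d\alpha(e_s,e_u)>0$. It happens to vanish for geodesic flows of hyperbolic surfaces, but you give no argument bounding it in general.

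\emph{The Gray-stability alternative.} This is only a heuristic. Two plane fields sharing a line with $\xi$ says nothing about contactness along an interpolation: both $\langle R,e_u+e_s\rangle$ and $\langle R,e_u-e_s\rangle$ meet $\xi$ in a line, yet they have opposite signs. Even along the natural path $\beta_\tau=(1-\tau)\alpha+\tau(\alpha_u-\alpha_s)$ from $\xi$ to the positive structure, one computes
\[
\beta_\tau\wedge d\beta_\tau(R,e_s,e_u)=(1-\tau)^2d\alpha(e_s,e_u)+\tau(1-\tau)(d\alpha_u-d\alpha_s)(e_s,e_u)+\tau^2(r_u-r_s).
\]
Its positivity needs the same kind of bracket control.

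\emph{The missing idea.} You do not need a Liouville pair with $\xi$ on top; the paper's route is as follows.
\begin{enumerate}
\item Take the truncated symplectization $([0,1]\times Y,d(e^t\lambda))$ of the Anosov Reeb form. Its top boundary is $(Y,\xi)$ on the nose.
\item For the bottom, tilt $\xi_-=\langle R,e_u-e_s\rangle$ to $\langle R+\varepsilon(e_s+e_u),e_u-e_s\rangle$. This is still a negative contact structure for small $\varepsilon$ and is now transverse to $R$; smooth it to $\eta_-$.
\item Since $\iota_R d\lambda=0$, one gets $d\lambda(R+\varepsilon(e_s+e_u),e_u-e_s)=2\varepsilon\,d\lambda(e_s,e_u)>0$. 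So $\{0\}\times Y$ is weakly convex for $(-Y,\eta_-)$. This uses only positivity of $d\lambda$ on one plane field, with no bracket terms.
\item Because the form is exact, Eliashberg's deformation \cite[Prop.~4.1]{Eli} makes the bottom end strongly convex as well. Then \Cref{thm:strong_inf} gives that the spectrum is infinite.
\item Capping $(-Y,\eta_-)$ with a strong cap yields a strong filling of $(Y,\xi)$, hence $c(Y,\xi)\neq 0$.
\end{enumerate}

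Once such a co-filling exists, the rest of your argument is fine. This covers independence of the choice of contact form and the fact that capacities of weakly exact fillings equal the boundary spectrum. It also covers your TQFT argument that the filling's cobordism map sends $c(Y,\xi)\otimes c(-Y,\cdot)$ to $1$, forcing $c(Y,\xi)\neq 0$.
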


	 ECH also possesses an endomorphism $U\!\!: ECH(Y,\xi)\to ECH(Y,\xi)$ called the \emph{$U$ map}, defined in \Cref{ssec:ECHprops}. Finiteness of the first element of the ECH spectrum corresponds exactly to the contact invariant lying in the image of the $U$ map. Thus, \Cref{thm:AnoReeb} can be reinterpreted as giving restrictions on when a contact structure $(Y,\xi)$ can admit a Reeb Anosov flow, in terms of the structure of ECH as an $\F_2[U]$-module.
	\begin{corollary}\label{cor:AnoReeb}
		A contact 3-manifold $(Y,\xi)$ can be Anosov only if $c(Y,\xi)$ is non-zero and not in the image of the $U$ map on $ECH(Y,\xi,0)$.
	\end{corollary}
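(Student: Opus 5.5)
The corollary should follow from \Cref{thm:AnoReeb} once we record the standard link between the first ECH spectral invariant and the $U$ map. Recall the definition: for a nondegenerate contact form $\lambda$ for $\xi$, $c_k(Y,\lambda)$ is the infimum of $L$ such that there is a class $\sigma \in ECH^L(Y,\lambda,0)$ with $U^k\sigma = c(Y,\xi)$ under the map to $ECH(Y,\xi,0)$. For degenerate forms it is extended by a limit of nondegenerate approximations. In particular, the definition forces $c_k(Y,\lambda)=\infty$ for all $k\geq 0$ when $c(Y,\xi)=0$, and by convention $c_0=0$ when $c(Y,\xi)\neq 0$.

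The plan is to argue by contrapositive. Suppose $(Y,\xi)$ is Anosov. The first half of \Cref{thm:AnoReeb} says $c(Y,\xi)\neq 0$ directly. For the second half, suppose toward a contradiction that $c(Y,\xi)=U\sigma$ for some $\sigma \in ECH(Y,\xi,0)$. Fix any nondegenerate contact form $\lambda$ for $\xi$. Since $ECH(Y,\xi,0)$ is the direct limit of the filtered groups $ECH^L(Y,\lambda,0)$ as $L\to\infty$, the class $\sigma$ is represented by a finite linear combination of orbit sets in $ECC(Y,\lambda)$. Hence $\sigma$ lies in the image of $ECH^L(Y,\lambda,0)$ for $L$ equal to the maximum action appearing in that cycle. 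It follows that $c_1(Y,\lambda)\leq L<\infty$.

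This contradicts \Cref{thm:AnoReeb}, which asserts the ECH spectrum is infinite for every contact form for $\xi$, so in particular $c_1(Y,\lambda)=\infty$. Therefore $c(Y,\xi)\notin \im U$.

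The only point needing care is using a nondegenerate form (or, if one prefers, any form, since \Cref{thm:AnoReeb} is stated for all forms) and the fact that every class in the direct limit comes from some finite filtration level. Both are formal properties of ECH. I expect no genuine obstacle: the content lies entirely in \Cref{thm:AnoReeb}, and this corollary is a reformulation of ``$c_1<\infty \iff c(Y,\xi)\in\im U$'' in the situation where $c(Y,\xi)\neq 0$.
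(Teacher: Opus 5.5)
Your argument is correct and is the same as the paper's: the corollary is read off from \Cref{thm:AnoReeb}, which gives $c(Y,\xi)\neq 0$ and $c_1(Y,\lambda)=\infty$ for every contact form. This is combined with the fact that, when the contact invariant is nonzero, $c_1(Y,\lambda)<\infty$ exactly when $c(Y,\xi)\in\im U$, which you justify via $ECH=\lim_{L}ECH^L$. One small quibble: your aside that the definition ``forces'' $c_k=\infty$ when $c(Y,\xi)=0$ is really a convention (taking $\sigma=0$ would otherwise satisfy it), but the aside is never used since $c(Y,\xi)\neq 0$ here.
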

As further motivation for \Cref{cor:AnoReeb}, a central problem in the modern theory of Anosov flows (which dates back at least to Smale \cite{Sma}) is their classification on a given 3-manifold up to \emph{orbit equivalence} (see \Cref{ssec:Ano} for a definition). One important class of Anosov flows are the skew $\R$-covered flows. Recent work of Barthelm\'{e}--Mann and Marty \cite{BarM,Marty} implies the classification of oriented skew $\R$-covered Anosov flows is equivalent to the classification of Anosov contact structures up to contactomorphism. By \Cref{cor:AnoReeb}, it suffices to restrict to contact structures with a very specific property of their contact invariant. 

\subsubsection{Hamiltonian Anosov flows}
Given a symplectic manifold $(X^{2n},\w)$ and a smooth closed hypersurface $Y^{2n-1} \subset X$, the one-dimensional distribution $\ker(\w|_{Y})\subset TY$ integrates to a \emph{characteristic foliation} of $Y$. This foliation has the following dynamical interpretation: given a smooth function $H\!\!: X\to \R$ with $Y$ as a regular level set, the flow of the associated Hamiltonian vector field $X_H$ is tangent to $Y$ and along $Y$ its flow lines are integral curves of the characteristic foliation. Thus, it makes sense to talk about the \emph{characteristic flow} of $Y$ (at least up to reparametrization). In his 1998 ICM address, Michael Herman posed the following question.
\begin{question}[Herman, 1998 ICM \cite{Her}]\label{q:Herm}
	Given a smooth compact Hamiltonian hypersurface $Y^{2n-1} \subset \R^{2n}$, can its characteristic Hamiltonian flow be Anosov?
\end{question} 

One motivation for studying Anosov Hamiltonian flows is a classical theorem of Newhouse \cite{New}, which implies that given a regular Hamiltonian hypersurface $H^{-1}(c)$ in a symplectic 4-manifold, in a $C^2$-open neighbourhood of smooth functions $\tilde{H}$ near $H$, a $C^2$-dense open set of hypersurfaces $\tilde{H}^{-1}(c)$ either has an elliptic orbit or its flow is Anosov. Hence Anosov Hamiltonian flows exactly obstruct the $C^2$-genericity of elliptic periodic orbits.

To the author's knowledge, Herman's question has not yet been addressed in the literature. Here we concern ourselves with the case $n=2$, since Anosov flows are best understood and known to exist plentifully in dimension three. First, we give an elementary construction of many such hypersurfaces.

\begin{proposition}\label{prop:AnoHamconstruction}
	Every symplectic 4-manifold admits infinitely many pairwise non-homeomorphic Hamiltonian hypersurfaces with an Anosov characteristic flow.
\end{proposition}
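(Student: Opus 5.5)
The plan is to build these hypersurfaces locally, inside a Darboux ball, using Anosov Liouville domains. By Darboux's theorem every symplectic 4-manifold $(X,\w)$ contains a symplectically embedded ball $(B^4(r),\w_\std)$ for some $r>0$. It therefore suffices to produce infinitely many pairwise non-homeomorphic closed 3-manifolds $Y$, each with a symplectic embedding of a neighbourhood of $\{0\}\times Y$ in some Anosov Liouville domain $(I\times Y, d\lambda)$ into a small ball. Such a domain embeds in a ball by a Weinstein-neighbourhood argument. Indeed, $d\lambda$ is exact on $I\times Y$, and the characteristic foliation of $\{0\}\times Y$ is spanned by the kernel of $d\lambda|_{\{0\}\times Y} = d(\alpha_-+\alpha_+)$. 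The first step is to check, from the Eliashberg--Thurston/Mitsumatsu construction recalled in \Cref{sec:defs}, that this kernel is exactly the line field $\xi_+\cap\xi_-$. That line field spans the Anosov flow. So the hypersurface $\{0\}\times Y$ in the Anosov Liouville domain has Anosov characteristic flow, up to reparametrization. Reparametrization preserves the Anosov property up to orbit equivalence, and that is all we need here.

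Next, embed a thin neighbourhood $U=(-\epsilon,\epsilon)\times Y$ into $\R^4$. I would take the 2-form $\sigma=d(\alpha_-+\alpha_+)|_Y$, which is an exact closed 2-form of rank 2 on $Y$. By the coisotropic (Gotay) neighbourhood theorem, $U$ is determined up to symplectomorphism near $Y$ by $(Y,\sigma)$. I would then look for an embedding $f\colon Y\to\R^4$ with $f^\ast\w_\std=\sigma$. Since $\sigma$ is exact with primitive $\beta=\alpha_-+\alpha_+$, this is an isotropic-type embedding problem, and the $h$-principle for embeddings with prescribed exact 2-form (Gromov; cf. the existence of ``$\sigma$-embeddings'' of codimension one into $\R^{2n}$ when $\sigma$ is exact and has maximal rank) applies. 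Its formal obstruction is the existence of a monomorphism $TY\to\R^4$ inducing a bundle map compatible with $\sigma$. Because $Y$ is parallelizable, I expect this to be easy to arrange.

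Assuming this, $f(Y)$ is a hypersurface in $\R^4$ whose characteristic foliation is $\ker\sigma$, which is the Anosov flow. Scaling by $\R^4\ni z\mapsto cz$ multiplies $\w_\std$ by $c^2$, but it preserves the characteristic foliation, so we can shrink $f(Y)$ into the Darboux ball. Finally, the hypersurface is a regular level set of a Hamiltonian, since any closed separating hypersurface in a ball is cut out by a function with that level set regular.

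For infinitely many topologically distinct examples, take $Y=S^\ast\Sigma_g$ for $g\ge 2$ with the geodesic flow, which is oriented Anosov. These have pairwise distinct fundamental groups, for instance distinguished by $H_1$ of rank $2g$. Alternatively one could use McDuff's explicit construction, avoiding general theory.

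The main obstacle is the embedding step. Applying the $h$-principle correctly requires care: it must verify that the formal condition holds and that the resulting embedding is closed and genuinely regular. If this fails, the fallback is a more hands-on construction, such as embedding McDuff's domain into $T^\ast\Sigma_g$ and then using a Lagrangian-type embedding of a neighbourhood into $\R^4$. That route is unavailable for closed $\Sigma_g$ with $g\geq 2$ and standard $\R^4$, so the $h$-principle route is preferred.
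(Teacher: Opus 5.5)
Your plan is aimed at something the paper proves impossible. Any hypersurface $f(Y)$ of the kind you want (say $S^\ast\Sigma_g$ with $g\geq 2$ carrying the geodesic flow, or any $Y$ whose characteristic flow reparametrizes an oriented Anosov flow) sits in a Darboux ball as a compact regular level set. A ball is a Liouville domain with $c_1^{\ECH}<\infty$. Reparametrization does not change the weak stable and unstable foliations, so it does not change their orientability. Hence \Cref{thm:AnoHam} rules out every hypersurface your construction is meant to produce. Separately, by \Cref{thm:inf_spec_invts} and monotonicity, an Anosov Liouville domain never embeds in a ball either.

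So the $h$-principle step cannot be repaired. The formal problem is solvable for parallelizable $Y$, and genuine solutions do not exist. This shows there is no $h$-principle for codimension-one embeddings with prescribed pullback of $\w_\std$. Gromov's isosymplectic embedding theorems only operate in higher codimension; for hypersurfaces the characteristic foliation is rigid.

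Your first step is also wrong. At $t=0$ the Liouville form restricts to $\alpha_++\alpha_-=2\alpha_u$. Since $\alpha_u(X)=0$, one has $\iota_X\dd\alpha_u=\mathcal{L}_X\alpha_u=r_u\alpha_u\neq 0$, so the flow direction $\xi_+\cap\xi_-$ is not in the kernel. In Geiges' example, $\dd(\alpha_++\alpha_-)=2\mu e^{\mu z}\,\dd z\wedge \dd v_+$. Its kernel is the linear $v_-$-direction in the torus fibres, not the suspension flow.

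The missing idea is to give up orientability. Your fallback, a Weinstein neighbourhood of a Lagrangian, is the right mechanism. You were correct that it fails for orientable surfaces of genus at least two, but it works for non-orientable ones. The paper proceeds as follows:
\begin{enumerate}
\item Take Givental's Lagrangian embeddings $L_n\subset\R^4$ of non-orientable surfaces with $\chi(L_n)=-4n$, $n\geq 1$, and give each a hyperbolic metric $g_n$.
\item Identify a neighbourhood of $L_n$ with $(D^\ast L_n,\varepsilon\w_\can)$.
\item Choose $H_n$ equal to $\|v\|^2_{g_n}$ there and at least $1$ elsewhere.
\end{enumerate}
Then $H_n^{-1}(1/2)$ is the unit cotangent bundle of $L_n$. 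Its characteristic flow is a reparametrized geodesic flow of a hyperbolic surface: Anosov, but necessarily non-oriented, consistent with \Cref{thm:AnoHam}. These hypersurfaces are pairwise non-homeomorphic. Your reduction to a Darboux ball and your level-set argument then finish the proof as you intended.
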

\begin{corollary}
	In any symplectic 4-manifold, the collection of Hamiltonian hypersurfaces with an elliptic orbit of their characteristic foliation is not $C^2$-Baire generic in the space of regular level sets of smooth Hamiltonians.
\end{corollary}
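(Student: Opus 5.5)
The statement to prove is the last Corollary: hypersurfaces with an elliptic orbit are not $C^2$-Baire generic. The plan is to combine \Cref{prop:AnoHamconstruction} with structural stability of Anosov flows. Fix a symplectic 4-manifold $(X,\w)$. By \Cref{prop:AnoHamconstruction} there is a smooth $H\colon X\to\R$ with regular level set $Y=H^{-1}(c)$ whose characteristic flow is Anosov. If $Y$ is noncompact we restrict attention to a compact component; the construction produces closed hypersurfaces, and that is what I assume.

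\textbf{Step 1 (openness).} Anosov flows form a $C^1$-open set of nonvanishing vector fields on a closed 3-manifold. For $\tilde H$ $C^2$-close to $H$, the level set $\tilde H^{-1}(c)$ near $Y$ is a regular level set, and via a normal-graph diffeomorphism it is $C^1$-close to $Y$ (in fact $C^2$-close). Transport $X_{\tilde H}|_{\tilde Y}$ back to $Y$ along this diffeomorphism. Because $X_{\tilde H}$ involves one derivative of $\tilde H$, the pulled-back vector field is $C^1$-close to $X_H|_Y$. This is exactly why the $C^2$ topology on Hamiltonians is the right one. Hence the characteristic flow of $\tilde Y$ is Anosov.

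\textbf{Step 2 (no elliptic orbits).} Every periodic orbit of an Anosov flow is hyperbolic: its linearized return map on a transverse slice splits as $E^s\oplus E^u$ with eigenvalues of modulus $\neq 1$. So none of these orbits is elliptic. Consequently there is a nonempty $C^2$-open set $\mathcal U$ of Hamiltonians, or equivalently of regular level sets near $Y$, none of which has an elliptic orbit.

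\textbf{Step 3 (conclusion).} A Baire-generic (residual) set is dense, so it must meet every nonempty open set. The set of level sets with an elliptic orbit misses $\mathcal U$, so it is not residual.

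The main obstacle is making Step 1 precise in the ambient space. One must specify the space of regular level sets with its $C^2$ topology, and check that its restriction near $Y$ is a Baire space (an open subset of $C^2$ functions, hence completely metrizable) containing the nonempty open set $\mathcal U$. One must also choose the identification $\tilde Y\cong Y$ uniformly, for instance via a tubular neighbourhood and the implicit function theorem applied to $\tilde H$ along normal fibres. This gives $C^2$ control of the embedding and therefore $C^1$ control of the transported vector field.
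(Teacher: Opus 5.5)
Your argument is correct and is essentially the one the paper intends. The paper states the corollary without a separate proof, as an immediate consequence of three facts: \Cref{prop:AnoHamconstruction} (which supplies compact level sets $Y_n=H_n^{-1}(1/2)$ carrying a non-oriented Anosov geodesic flow), the $C^1$-openness of Anosov flows under $C^2$-perturbation of the Hamiltonian, and the hyperbolicity of every periodic orbit of an Anosov flow. Your extra care in working in a genuine Baire space of $C^2$ functions goes beyond what the paper addresses, but it is consistent with it.
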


 All the hypersurfaces we will construct carry a non-orientable Anosov flow, and  so it is reasonable to modify Herman's question by demanding that the characteristic flow be oriented Anosov. In a spirit analogous to \Cref{thm:AnoReeb}, one can give a general obstruction to oriented Anosov Hamiltonian flows.

\begin{theorem}\label{thm:AnoHam}
Suppose that $(X,\w)$ is a four-dimensional weak Liouville domain with $c_1^{\ECH}(X,\w)<\infty$. Let $H:X\to \R$ be a smooth Hamiltonian and $M= H^{-1}(c)$ a compact regular level set. Then the characteristic Hamiltonian flow on $M$ cannot be oriented Anosov. 
\end{theorem}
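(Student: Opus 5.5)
Argue by contradiction: suppose the characteristic flow on $M=H^{-1}(c)$ is oriented Anosov, and show this produces an exact symplectic embedding into $X$ of an Anosov Liouville domain, or of a Liouville domain with disconnected boundary. Monotonicity of ECH capacities under such embeddings, combined with \Cref{thm:inf_spec_invts}, then gives a contradiction.

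The first step is to understand the symplectic geometry near $M$. Since $M$ is a regular level set, a neighbourhood is $(-\epsilon,\epsilon)_s\times M$, with $\w = \w_M + ds\wedge \beta$ for some $1$-form $\beta$ (after a Moser-type normalization), where $\w_M=\w|_M$ and $\ker \w_M$ is spanned by the Anosov vector field $V$. The oriented Anosov flow gives, by Eliashberg--Thurston and Mitsumatsu, a Liouville pair $(\alpha_+,\alpha_-)$ on $M$ with $\alpha_\pm(V)=0$. The plan is to choose the pair so that $d\alpha_\pm$ is compatible with $\w_M$. Then I would build a domain $I\times M$, with the form $d(e^{-t}\alpha_-+e^t\alpha_+)$ suitably shrunk, and embed it into the collar $(-\epsilon,\epsilon)\times M$ as a symplectic submanifold of codimension zero with disconnected boundary.

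Exactness of $\w$ on $X$ (weak Liouville domain) should let me adjust the embedding so that it is weakly exact. It should suffice to obtain a connected weakly exact domain $W\subset X$ whose boundary has two components, each with contact form, and whose symplectic form agrees with $\w$. By \Cref{thm:inf_spec_invts}, $W$ has all ECH capacities infinite. Monotonicity under symplectic embedding, which I expect holds for weakly exact embeddings of weak Liouville domains as in the paper's setup, gives $c_1^{\ECH}(X,\w)\ge c_1^{\ECH}(W)=\infty$. This contradicts the hypothesis.

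The main obstacle is the embedding step: making $\w$ near $M$ look like an Anosov Liouville structure. The difficulty is that $\w_M$ is a closed $2$-form with kernel $V$ which need not be exact on $M$, so we cannot simply take $\w_M=d\alpha$. My first attempt would use the Anosov splitting. Since $\w_M$ is $V$-invariant, it is a smooth invariant area form on $E^s\oplus E^u$, which makes the flow volume preserving. Using the stable and unstable bundles, I would define $\alpha_\pm$ as $\pm$ combinations of dual forms to $E^u,E^s$, in the style of Hozoori's characterization of Anosov flows via Liouville pairs. Then $\alpha_+\wedge d\alpha_+>0$ and $\alpha_-\wedge d\alpha_-<0$, and both forms vanish on $V$. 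The remaining issue is the cohomology class $[\w_M]$. If it is nonzero, I would instead replace the domain by the region $\{c-\delta\le H\le c+\delta\}$ itself: its two boundary components are nearby level sets, and I would try to show they are contact type using the Liouville vector field perturbed along the Anosov direction. I expect this cohomological issue, together with checking that the ECH-capacity monotonicity argument works in that setting, to be where the real work lies.
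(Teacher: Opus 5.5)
There is a genuine gap, and it is in the step you flagged. It cannot be repaired. No weak Liouville domain $W\subset X$ with disconnected boundary can be built as a neighbourhood of $M$ (an embedded $I\times M$ in the collar, or the shell $\{c-\delta\le H\le c+\delta\}$), however $\alpha_\pm$ are chosen.

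Since $\partial X$ is connected (by \Cref{thm:inf_spec_invts}), $M$ bounds a compact region $D$ on the side away from $\partial X$. So the inner boundary $N$ of $W$ bounds a compact region $C\subset D$ lying outside $W$. Write $\Omega=d\Lambda$ on $X$, and $\Omega|_N=d\alpha$ with $\alpha\wedge d\alpha>0$ for the boundary orientation of $W$. Then $\Lambda|_N-\alpha$ is closed, $\partial C=-N$, and Stokes gives
\[0<\int_C\Omega\wedge\Omega=\int_{\partial C}\Lambda\wedge d\Lambda=-\int_N\alpha\wedge d\alpha<0 .\]
That is, $C$ would be an exact symplectic cap, which is impossible.

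Your construction uses only data near $M$. It would therefore also apply to the unit cotangent bundle of a hyperbolic surface inside a larger cotangent disk bundle, whose characteristic flow is oriented Anosov; the computation above shows no such $W$ exists there. Separately, your cohomological worry is moot: $\w_M$ is the pullback of the exact form $\Omega$, so it is always exact.

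The missing idea is to keep $\partial X=(Y,\xi)$ itself as one convex end, and to produce the second convex end only weakly, leaving $X$ afterwards.

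Orient $M$ so that $\partial_t$ points out of $D$, and use the collar model $\w+d(t\lambda)$. Take the negative Anosov contact structure $\ker(\alpha_u+\alpha_s)$ and tilt it to $\tilde\xi_-=\langle v+\delta(e_s+e_u),e_u-e_s\rangle$, which is transverse to $v$; let $\eta_-$ be a smoothing of it. Since $\w(e_s,e_u)>0$,
\[(\w-\varepsilon\, d\lambda)\big(v+\delta(e_s+e_u),\,e_u-e_s\big)=2\delta\,\w(e_s,e_u)+O(\varepsilon)>0\]
for $\varepsilon\ll\delta$. Hence $(X\setminus D)\cup([-\varepsilon,0]\times M)$ has strongly convex end $(Y,\xi)$ and weakly convex end $(-M,\eta_-)$.

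Exactness of $\w$ near that end lets you apply Eliashberg's deformation to make it strongly convex. This changes the form near that end, so the result no longer sits inside $X$, which is consistent with the obstruction above. It is nevertheless a strong co-filling of $(Y,\xi)$. Then \Cref{thm:strong_inf}, not monotonicity, contradicts $c_1(Y,\xi)=c_1^{\ECH}(X,\w)<\infty$.
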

Since compact domains in $\R^4$ have finite first ECH capacity, and every hypersurface in $\R^4$ is a level set of some function, this answers the oriented refinement of \Cref{q:Herm} in the negative.

\Cref{thm:AnoHam} can also be used to prove the generic existence of elliptic orbits on certain Hamiltonian hypersurfaces in Liouville domains with $c_1^{\ECH}<\infty$  (see \Cref{cor:elliptic}).

\subsubsection{Strong co-fillings and Anosov flows} We now observe a Floer-theoretic obstruction to symplectic co-fillings, and by proxy, Anosov flows.
\begin{definition}
	A connected contact manifold $(Y,\xi)$ is called \emph{strongly co-fillable} if there exists a contact manifold $(Y',\xi')$, non-empty but possibly disconnected, so that $(Y,\xi)\sqcup (Y',\xi')$ admits a connected strong filling.
\end{definition} 
\begin{remark}
Note that being strongly co-fillable is a more restrictive condition than being strongly fillable in dimension three, since any strong co-filling of $(Y,\xi)$ can have its other boundary component(s) capped off \cite{EH} to produce a strong filling of $(Y,\xi)$. The tight contact structure on $S^3$ gives an example of a strongly fillable but not strongly co-fillable	contact manifold, as was first shown by McDuff \cite{McD}.

To avoid confusion, we note there is also an older notion of \emph{strong semi-fillability} of $(Y,\xi)$, which is usually taken to mean $(Y,\xi)$ is one component of the boundary of a strong filling (that may or may not have disconnected boundary). The existence of symplectic caps \cite{EH} implies strong semi-fillability and strong fillability are equivalent in dimension three.
\end{remark}

Mitsumatsu \cite[p.\!\!\! 1420]{Mit} asks when a $3$-manifold can or cannot be realized as a boundary component of a symplectic manifold with disconnected convex boundary. Some previous obstructions to contact structures being co-fillable were found by Wendl \cite{Wen} using the notion of partially planar domains. We now give a Floer-theoretic obstruction to a 3-manifold admitting any strongly co-fillable contact structure. The Anosov Liouville construction then implies this is also an obstruction to Anosov flows. We phrase the result in terms of monopole Floer cohomology rather than ECH to emphasize its topological nature, independent of a choice of contact structure. Given an oriented  2-plane field $\xi$ on a closed 3-manifold $Y$ (for example a contact structure), one can associate to it a spin$^c$-structure $\mathfrak{s}_\xi$ (see \cite[\S 8.1]{Hut10}).

\begin{theorem}\label{cor:Umap}
	Suppose $Y$ is a closed oriented three-manifold so that the $U$ map on the `from' variant of monopole Floer cohomology for the spin$^c$-structure $\mathfrak{s}$
	\[U : \widehat{HM}^\ast(Y,\mathfrak{s})\to \widehat{HM}^\ast(Y,\mathfrak{s}) \quad \text{ satisfies $\ker(U) \subset \im(U)$. }\]
	Then any contact structure $(Y,\xi)$ on $Y$ with $\mathfrak{s}_\xi = \mathfrak{s}$ cannot be strongly co-fillable. In particular, if this holds for every spin$^c$-structure $\mathfrak{s}$ with $c_1(\mathfrak{s})=0$, then  $Y$ cannot carry an oriented Anosov flow.
\end{theorem}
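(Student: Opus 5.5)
We argue by contradiction, reducing to the mechanism behind \Cref{thm:inf_spec_invts}. Suppose $(Y,\xi)$ has $\mathfrak{s}_\xi=\mathfrak{s}$ and is strongly co-fillable, so there is a connected strong filling $(X,\w)$ of $(Y,\xi)\sqcup(Y',\xi')$ with $Y'\neq\emptyset$. Using the isomorphism of Taubes between ECH and monopole Floer cohomology, $ECH_\ast(Y,\xi,\Gamma)\cong \widehat{HM}^{-\ast}(Y,\mathfrak{s}_\xi+\mathrm{PD}(\Gamma))$, which intertwines the $U$ maps and sends the contact invariant $c(Y,\xi)\in ECH(Y,\xi,0)$ to the corresponding class in $\widehat{HM}^\ast(Y,\mathfrak{s})$, we work entirely on the ECH side. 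The class $c(Y,\xi)$ is represented by the empty orbit set and satisfies $U c(Y,\xi)=0$ (the empty set has nothing below it in the filtration). Hence $c(Y,\xi)\in\ker U\subset \im U$ by hypothesis.

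The next step is to show $c(Y,\xi)\neq 0$ and $c(Y,\xi)\notin\im U$, giving the contradiction. For nonvanishing, cap off $Y'$ with a strong symplectic cap \cite{EH} to obtain a strong filling of $(Y,\xi)$; the cobordism map from the filling sends $c(Y,\xi)$ to the generator $1$ of the (Seiberg--Witten) invariant of the closed/capped manifold in the appropriate sense, so $c\neq0$, as is standard for strongly fillable contact structures. For the key step, apply the cobordism map $\Phi$ induced by $X$ viewed as a strong cobordism from $(Y,\xi)$ to $(-Y',\xi')$ turned around—more precisely, regard $X$ as a cobordism from $Y$ to the empty set after attaching a cap to $Y'$, or regard $X$ minus a collar as a cobordism from $Y$ to $-Y'$. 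The argument used for \Cref{thm:inf_spec_invts} shows that disconnectedness of the convex boundary forces $c_1(Y,\lambda)=\infty$ for every contact form, i.e.\ $c(Y,\xi)\notin \im U$ (finiteness of $c_1$ being equivalent to $c(Y,\xi)$ lying in the image of $U$, as noted after \Cref{thm:AnoReeb}). I expect to invoke that theorem directly here; it applies to strong fillings once the weak Liouville hypothesis is replaced by strong, which I would check by examining whether its proof uses exactness only to control the cobordism maps' compatibility with $U$ (the strong case using Hutchings--Taubes/Kronheimer--Mrowka cobordism maps with twisted coefficients if needed). This adaptation is the main obstacle: if \Cref{thm:inf_spec_invts} genuinely needs exactness, I would instead argue directly in monopole Floer theory, using that the mixed map of a connected cobordism with two incoming ends factors through $U$-equivariance on each end, forcing $c(Y,\xi)$ to pair trivially with $\im U$'s preimages, and deduce $c\notin\im U$.

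Combining, $c(Y,\xi)\in\ker U\setminus \im U$, contradicting $\ker U\subset\im U$. For the final claim, an oriented Anosov flow on $Y$ yields by Mitsumatsu \cite{Mit} an Anosov Liouville domain $I\times Y$, an exact (hence strong) filling of $(Y,\xi_+)\sqcup(-Y,\xi_-)$, so $(Y,\xi_+)$ is strongly co-fillable. Since $\xi_+$ and $\xi_-$ are transverse with intersection spanning the flow, $\xi_+$ is homotopic as a plane field to the stable/unstable plane field, whose Euler class vanishes on a manifold with Anosov flow (the orientable line bundles $E^s,E^u$ trivialize it), so $c_1(\mathfrak{s}_{\xi_+})=c_1(\xi_+)=0$. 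The hypothesis for that $\mathfrak{s}$ then contradicts the first part.
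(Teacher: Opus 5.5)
Your argument is essentially the paper's proof. Via Taubes' isomorphism, $c(Y,\xi)\in\ker U\subset\im U$, so $c_1(Y,\lambda)<\infty$, which is incompatible with a strong co-filling; for the Anosov claim, the Anosov Liouville domain supplies such a co-filling with $c_1(\xi_+)=0$, since the flow direction is a nonvanishing section of $\xi_+$. The step you flagged as the main obstacle is exactly the paper's \Cref{thm:strong_inf}, but exactness there controls compatibility with the action filtration, not $U$-equivariance: the paper handles strong cobordisms by restricting Hutchings' cobordism map to the class $0\in H_2(X,\partial X)$, for which Stokes gives $\int_{\mathcal{C}}\w=\mathcal{A}(\alpha)-\mathcal{A}(\beta)\geq 0$, so neither twisted coefficients nor your (as stated, non-functional) unfiltered monopole fallback is needed.
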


\begin{example}Note the hypotheses of this theorem are satisfied for every spin$^c$-structure whenever the $U$ map is surjective, which is true for example on $S^3$,  $T^3$, and $S^2\times S^1$. More generally, it is true for any 3-manifold which admits a metric of positive scalar curvature \cite[Prop.\!\! 36.1.3]{KM}; in this case \Cref{cor:Umap} recovers a result of Lisca \cite{Lis}. 

The hypotheses also hold for any \emph{$L$-space}. Recall an $L$-space $Y$ is a rational homology sphere so that the reduced monopole Floer cohomology
\[HM^\ast(Y) := \im(j^\ast : \widehat{HM}^\ast(Y) \to \widecheck{HM}^\ast(Y)) \quad \text{vanishes.}\]
This condition implies  there is a surjective map of $\F_2[U]$-modules
\[p^\ast : \overline{HM}^\ast(Y, \mathfrak{s}) \to \widehat{HM}^\ast(Y,\mathfrak{s})\]
for each spin$^c$-structure $\mathfrak{s}$ on $Y$. Since $Y$ is a rational homology sphere, $c_1(\mathfrak{s})$ is torsion and \cite[Prop. 35.3.1]{KM} shows that 
\[\overline{HM}^\ast(Y,\mathfrak{s}) \cong \F_2[U^{-1}, U] \quad \text{as $\F_2[U]$-modules,}\] 
hence the $U$ map is surjective on $\widehat{HM}^\ast(Y,\mathfrak{s})$.
\end{example}

The obstruction to Anosov flows given in \Cref{cor:Umap} has been obtained independently by Hedden--Raoux--Van Horn-Morris \cite{HRVHM} in terms of Heegaard Floer homology. Using this obstruction, they find the first examples of hyperbolic 3-manifolds $Y$ with $b_1(Y)>0$ admitting no oriented Anosov flows (c.f. the discussion in \cite[p.\! 363]{Yaz}). Note, by Gabai's theorem, such examples have many taut foliations. 

Hedden--Raoux--Van Horn-Morris' work builds on a result of Lin \cite{Lin}, who makes use of the TQFT properties of monopole Floer theory to give an obstruction to co-orientable taut foliations on rational homology spheres. This is an enhancement of older work of Ozsv\'{a}th  and Szab\'{o} \cite{OS}, who prove that an $L$-space $Y$ admits no co-orientable contact structure with a weak co-filling of the form $I\times Y$ (and therefore, by Eliashberg--Thurston \cite{ET}, no co-orientable taut foliation). This result was part of the genesis of the \emph{$L$-space conjecture} (c.f. \cite{BGW,Juh}). 

Since Anosov flows give rise to taut foliations, Ozsv\'{a}th--Szab\'{o} and Lin's conclusions are stronger but require the extra assumption that $Y$ is a rational homology sphere. We note for example that their results do not apply to $T^3$; and indeed, $T^3$ admits a taut foliation and hence a weak co-filling, but it is well known (and our result gives another proof) that it cannot have a strong co-filling or Anosov flow.  One can recover Lin's result from \Cref{cor:Umap} by noting that when $Y$ is a rational homology sphere, every weak filling $I\times Y$ can be deformed to a strong filling \cite{Eli}.

\begin{remark}	The assumption that our Anosov flows are orientable is essential throughout. The geodesic flow on the cotangent circle bundle of a hyperbolic non-orientable surface is a non-orientable Reeb Anosov flow. By \Cref{prop:non-orient}, it has finite ECH spectrum, which contradicts \Cref{thm:AnoReeb} if one does not require orientability. 

  As noted by Baldwin--Sivek--Zung \cite[p.\!\!\!\!\! 2]{BSZ}, $56/3$-surgery on the pretzel knot $P(-2,3,7)$ gives rise to a hyperbolic $L$-space admitting an Anosov flow which is not transversely orientable. Because this is an $L$-space, it provides a counterexample to \Cref{cor:Umap} if one drops the oriented assumption. 
\end{remark}

\subsubsection{High-genus Lagrangians} Given a symplectic 4-manifold $(X,\w)$ and a closed embedded Lagrangian surface $L\subset X$, the Weinstein tubular neighbourhood theorem guarantees a neighbourhood of $L$ in $X$ is symplectomorphic to $(D^\ast L,\w_\can)$ for a small radius. Monotonicity of the ECH capacities under symplectic embeddings then implies 
\[c_k^{\ECH}(D^\ast L,\w_{\can})\leq c_k^{\ECH}(X,\w).\] 
Note that, unlike for say the case of Gutt--Hutchings capacities \cite[Rmk.\!\! 1.25]{GH}, we do not require that $D^\ast L \subset X$ is a Liouville embedding, or that $L$ is an exact Lagrangian for monotonicity to hold. We thus deduce the following corollary of \Cref{thm:cot_inf}.

\begin{corollary}
\label{cor:Lags}
	Suppose $(X,\w)$ is a symplectic 4-manifold with $c_1^\ECH(X,\w)<\infty$. Then $(X,\w)$ does not contain any closed embedded oriented Lagrangian submanifold of genus two or greater. 
\end{corollary}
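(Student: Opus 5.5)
We argue by contradiction, combining the Weinstein neighbourhood theorem, monotonicity of ECH capacities, and \Cref{thm:cot_inf}, exactly as in the discussion preceding the statement. Suppose $L \subset X$ is a closed embedded oriented Lagrangian surface of genus $g \geq 2$.

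\textbf{Step 1: a local model.} By the Weinstein tubular neighbourhood theorem, some neighbourhood $U$ of $L$ in $X$ is symplectomorphic to a small-radius cotangent disk bundle $(D^\ast_\epsilon L, \w_\can)$, for any chosen metric on $L$. We thus obtain a symplectic embedding
\[
\varphi : (D^\ast_\epsilon L, \w_\can) \hookrightarrow (X,\w).
\]

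\textbf{Step 2: monotonicity.} The ECH capacities are monotone under symplectic embeddings of four-dimensional domains, with no exactness requirement on the embedding and no requirement that $L$ be exact. Applied to $\varphi$, this gives
\[
c_1^{\ECH}(D^\ast_\epsilon L, \w_\can) \leq c_1^{\ECH}(X,\w) < \infty .
\]

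\textbf{Step 3: contradiction.} Since $L$ is an oriented surface of genus $g \geq 2$, \Cref{thm:cot_inf} says all ECH capacities of its cotangent disk bundle are infinite. By the remark following that theorem, this holds for any radius and any metric, since disk bundles can be rescaled into one another and capacities scale linearly. In particular $c_1^{\ECH}(D^\ast_\epsilon L,\w_\can) = \infty$, which contradicts Step 2.

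The only point needing care is the monotonicity step when $X$ is not itself a Liouville domain, for instance if $X$ is noncompact or $\w$ is not exact. Here I would use Hutchings' general definition of $c_k^{\ECH}(X,\w)$ for arbitrary symplectic 4-manifolds, namely the supremum of the capacities of Liouville domains symplectically embedded in $X$. Under that definition the inequality in Step 2 is immediate, because $D^\ast_\epsilon L$ is itself a Liouville domain embedded in $X$. With that convention fixed, the corollary follows directly.
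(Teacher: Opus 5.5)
Your proposal is correct and follows essentially the same route as the paper: apply the Weinstein neighbourhood theorem to embed a small cotangent disk bundle $D^\ast_\epsilon L$ into $X$, use monotonicity of ECH capacities (which needs no exactness of $L$ or Liouville condition on the embedding), and conclude via \Cref{thm:cot_inf} and the subsequent rescaling remark. Your handling of general $X$ through the supremum-over-embedded-weak-Liouville-domains definition matches the paper's convention.
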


While this provides an obstruction to the existence of high-genus Lagrangians in many cases, for existing examples where the ECH capacities are finite there is usually easier topological obstructions to these Lagrangians embeddings. However, in future work we will give many more examples of domains with finite first ECH capacity and complicated topology, so that the non-existence of high-genus Lagrangians cannot be deduced from elementary obstructions. 

 On the other hand, whenever high genus Lagrangians are already known to exist inside $(X,\w)$, \Cref{cor:Lags} implies its ECH capacities are infinite. This proves the ECH capacities are infinite in some new cases, including the standard symplectic form on the 4-torus or more generally any product symplectic structure on $\Sigma_g \times \Sigma_{g'}$ with $\min\{g,g'\}\geq 1$.

\subsubsection{Cotangent bundles of non-orientable surfaces}

We have seen that for an oriented surface $\Sigma_g$, the ECH capacities of its cotangent disk bundle $D^\ast \Sigma_g$ are finite if and only if $g=0$ or $1$. One might also be interested in the case of cotangent disk bundles of non-orientable surfaces. For $\R P^2$ and the Klein bottle, the capacities are known to be finite \cite{FR,MR}. We extend this to all non-orientable surfaces.
\begin{proposition}\label{prop:non-orient}
	For any closed non-orientable surface $N$, the ECH capacities of a cotangent disk bundle $(D^\ast N, \w_\can)$ are finite. 
\end{proposition}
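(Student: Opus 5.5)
Since the ECH capacities of $D^\ast N$ do not depend (up to finiteness) on the choice of metric, as in the remark following \Cref{thm:cot_inf}, I am free to pick a convenient one. By monotonicity under symplectic embeddings, it then suffices to embed a cotangent disk bundle $D^\ast N$ symplectically into a Liouville domain $(X,\w)$ with $c_k^{\ECH}(X,\w)<\infty$ for all $k$.

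The plan is to realize $N$ as a Lagrangian inside a domain whose capacities are already known to be finite. Write $N = \R P^2 \# \cdots \# \R P^2$, or equivalently $N = \R P^2 \# \Sigma_h$ or $K \# \Sigma_h$ with $K$ the Klein bottle.

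The most robust route I would try first is a direct construction. Non-orientable surfaces embed as Lagrangians in $\C^2$ whenever the Euler characteristic satisfies $\chi(N)\equiv 0 \pmod 4$ and $\chi \neq 0$ (Givental's construction, Nemirovski's obstruction). For such $N$, a compact Lagrangian in $\R^4$ lies in a ball $B^4(R)$. Then Weinstein's theorem gives an embedding $D^\ast_\epsilon N\hookrightarrow B^4(R)$, so
\[c_k^{\ECH}(D^\ast N)\le c_k^{\ECH}(B^4(R))<\infty.\]

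For the remaining Euler characteristics (including $\R P^2$ and $K$, where the answer is known from \cite{FR,MR}), I would instead embed $N$ as a Lagrangian in a different domain with finite capacities. One option is $D^\ast S^2$ or $D^\ast T^2$, whose capacities are computed in \cite{FR,FRV,HutQ}. Another is a ball with a Lagrangian $\R P^2$ blown up. Lagrangian connected sum with standard local models (Polterovich surgery of a Lagrangian with a Lagrangian torus or Klein bottle at transverse points) lets me add handles. So I would show that every non-orientable $N$ arises as a Lagrangian in some domain of finite capacities: for example, in $D^\ast K$ or $D^\ast \R P^2$ itself, by taking the zero section and surgering in Lagrangian tori inside a Darboux ball.

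The main obstacle is this existence step: producing, for every $N$, a Lagrangian embedding into a domain with finite capacities. An alternative that sidesteps it is dynamical, which I would fall back on. Take a hyperbolic metric on $N$ and pass to the orientation double cover $\Sigma\to N$. Finiteness of capacities means the ECH spectrum of $(S^\ast N,\lambda_\can)$ is finite, that is, the contact invariant lies in $\im U^k$ for all $k$. One can try to show $S^\ast N$ carries a contact form with an elliptic or otherwise suitable structure, or exhibit explicit finite-action ECH generators, as was done for $K$. I expect the Lagrangian-embedding route to be cleaner, using the arithmetic of $\chi(N)$ and handle addition.
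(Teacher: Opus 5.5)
Your Givental step for $N_{2+4j}$ matches the paper. The existence step you yourself flag as the main obstacle is a genuine gap, though, and the mechanism you propose for it provably cannot close it. The obstruction is Audin's congruence (the Lagrangian case of the Whitney--Massey congruence): any closed Lagrangian surface $L$ in a symplectic 4-manifold satisfies $\mathcal{P}([L])\equiv\chi(L)\pmod 4$, where $\mathcal{P}$ is the Pontryagin square on $H_2(\,\cdot\,;\Z/2)$. The values of $\mathcal{P}$ on your proposed ambient domains are:
\begin{itemize}
\item $\{0\}$ on $B^4$, $D^\ast T^2$ and $D^\ast K$;
\item $\{0,1\}$ on $D^\ast\R P^2$;
\item $\{0,2\}$ on $D^\ast S^2$.
\end{itemize}
Hence no $N_{3+4j}$ (for instance $N_3$ or $N_7$, which have $\chi\equiv 3\pmod 4$) is Lagrangian in any domain you list. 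Moreover, $D^\ast K$ and $D^\ast\R P^2$ can only ever host the surfaces $N_{2+4j}$ and $N_{1+4j}$. Your surgery recipe shows the same thing concretely. A closed Lagrangian torus in a Darboux ball meets the zero section in an even number of transverse points, so surgering it in adds $\R P^2$ summands in multiples of four. Lagrangian Klein bottles do not exist in a Darboux ball (Nemirovski--Shevchishin), so that local model is unavailable. The families $N_{3+4j}$ and $N_{4+4j}$ are therefore left untreated, and the dynamical fallback is too vague to count as an argument.

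The missing idea in the paper is to change the ambient topology by blowing up. Rieser's construction blows up a ball meeting $L$ in a real disk, so that $L$ lifts to a Lagrangian $L\#\R P^2$. This is exactly what evades the mod $4$ obstruction, since the exceptional class has $\mathcal{P}=-1$. Blowing up Givental's $N_{6+4j}\subset B^4(R)$ between zero and three times produces every $N_k$ with $k\ge 6$. Blowing up $D^\ast K$ one, two or three times produces $N_3,N_4,N_5$.

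A blown-up ball is not exact, so your reduction ``embed into a Liouville domain with finite capacities'' does not directly apply. The paper instead treats the complement of $D^\ast N_k$ as a strong, non-exact cobordism $(X_k,\w_k)$ from $(S^3,\lambda_\std)$ (respectively $S^\ast K$) to $(S^\ast N_k,\varepsilon\lambda_\can)$. It then uses Hutchings' cobordism map $\Phi(X_k,\w_k,0)$ in the zero class, which fixes $[\emptyset]$ and commutes with $U$. This gives $U^i\Phi(\gamma_i)=\Phi(U^i\gamma_i)=[\emptyset]$ for every $i$, so all the capacities of $D^\ast N_k$ are finite.
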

We therefore have a complete classification of which closed two-manifolds have finite ECH capacities of their cotangent disk bundles. 

\subsubsection{Capacities from rational SFT and co-fillings}
Hutchings has sketched the construction of a sequence of \emph{RSFT capacities}: 
\[c_1^{\mathrm{RSFT}}(X,\w)\leq c_2^\mathrm{RSFT}(X,\w)\leq c_3^\mathrm{RSFT}(X,\w)\leq \cdots \leq \infty \]
associated to a symplectic manifold $(X,\w)$ of any dimension \cite{HutRSFT}. These are defined analogously to the ECH capacities in terms of a ``$q$-only" formulation of rational symplectic field theory, contingent on suitable rigorous foundations for the invariant. Because of their similar formal properties, a version of \Cref{thm:inf_spec_invts} can be proven for these capacities. 

\begin{proposition}\label{prop:rat}
Suppose $(X,\w)$ is a connected Liouville domain with disconnected boundary. Then its RSFT capacities are  infinite.\end{proposition}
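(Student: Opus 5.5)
We will mimic the proof of \Cref{thm:inf_spec_invts}, replacing ECH by the $q$-only rational SFT formalism of \cite{HutRSFT}. We take the formal properties of that theory as given. For a nondegenerate contact manifold $(Y,\lambda)$ there is a filtered chain complex generated by (products of) Reeb orbits, whose differential counts genus-zero curves with one positive end. There is a distinguished ``empty set'' class, and the capacities $c_k^{\mathrm{RSFT}}$ are defined as the minimal action needed to represent the empty set class by elements whose images under the $k$-fold ``point-constraint'' operation are nontrivial. Disconnected $Y=Y_1\sqcup Y_2$ satisfies a Künneth-type formula in which the empty set of $Y$ is the product of the empty sets of the components. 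An exact (or Liouville) cobordism induces a filtered chain map, and a filling induces an augmentation sending the empty set to $1$. By the definition of the capacities of $X$ via its boundary, it suffices to show that for $\partial X = Y_1\sqcup Y_2$ (grouping components into two nonempty pieces) the capacities of $\partial X$ are infinite.

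The key steps run in order as follows.

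\emph{Step 1.} Observe that the capacity $c_k^{\mathrm{RSFT}}(X)$ is finite only if some chain $\beta$ of orbit sets in $\partial X$ has the property that its differential equals a multiple of the empty set (the ``$U$-type'' constraint map hits the empty set), with all orbits in $\beta$ nonempty.

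\emph{Step 2.} Show that such a relation on $Y_1\sqcup Y_2$ can be pushed through the filling. Any curve in the completion of $X$ with positive ends on $\partial X$ and no negative ends is counted by the augmentation $\epsilon_X$. The cobordism map relation $\epsilon_X\circ \partial_{\partial X} = 0$ (or the analogous relation with the point-constraint insertion) gives a chain-level identity.

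\emph{Step 3.} Use connectedness of $X$ together with disconnectedness of $\partial X$. Decompose $X$ as an exact cobordism from $Y_1$ to $-Y_2$, i.e.\ view $X$ as a Liouville cobordism whose convex end is $Y_1$ and whose "concave" side is filled by the same $X$. Alternatively, form the capped object obtained by attaching to $Y_2$ nothing, and argue that the empty set class of $Y_1\sqcup Y_2$ factors as $\emptyset_{Y_1}\otimes\emptyset_{Y_2}$. Any element realizing finiteness must then produce, via the filling map, a curve count equal to $1$ for a rigid genus-zero point-constrained curve in $\hat X$ with all positive ends. We derive a contradiction from the fact that, in a Liouville domain with disconnected boundary, the Liouville vector field/exactness forces every such holomorphic curve to have energy equal to the total action of its ends, and neck-stretching along a collar of $Y_2$ splits it.

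The main obstacle is Step 3. We must show that the point-constrained augmentation on $Y_1\sqcup Y_2$ cannot be "cancelled" without using orbits in both components. The plan is to mirror the ECH argument exactly. In ECH the key was that $\mathrm{ECH}(Y_1\sqcup Y_2)\cong \mathrm{ECH}(Y_1)\otimes \mathrm{ECH}(Y_2)$, and that the cobordism map from the filling sends the contact class to $1$. Applying the $U$-map identity on one factor then forces the other factor's contact class to be a $U$-image and, symmetrically, leads to a contradiction via $U$ acting on a tensor product. For RSFT we will use the analogous statement: the $q$-only RSFT of a disjoint union is the (symmetric) tensor product, the point constraint acts as a derivation on one factor, and the augmentation from $X$ is multiplicative only up to curves connecting components. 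Those connecting curves exist precisely because $X$ is connected, but they carry positive ends on both $Y_1$ and $Y_2$. A degree/filtration count then shows they cannot pair with the empty class. We expect to first try the cleanest version, assuming the same ``formal properties'' Hutchings lists, and reduce verbatim to the argument used for \Cref{thm:inf_spec_invts}.
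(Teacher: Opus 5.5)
\textbf{There is a genuine gap, at exactly the step you flag as the main obstacle: Step 3 never produces a contradiction.}

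None of the mechanisms you offer does the job.
\begin{itemize}
\item Every curve in $\widehat{X}$ has energy equal to the total action of its ends. But this holds in \emph{every} exact filling, including an irrational ellipsoid, whose first RSFT capacity is finite. So it cannot rule out the point-constrained count you want to kill.
\item Viewing $X$ as an exact cobordism from $Y_1$ to $-Y_2$ is not available, because $Y_2$ is a convex boundary component, not a concave one.
\item The appeal to ``connecting curves'' with positive ends on both $Y_1$ and $Y_2$, followed by an unspecified ``degree/filtration count'', is not an argument. In fact $j(\gamma)=\gamma\otimes\emptyset_{Y_2}$ involves no $Y_2$-variables, so such curves never see it.
\end{itemize}
Your summary of the ECH argument (a $U$-identity on one tensor factor forcing the other factor's contact class into $\mathrm{im}\,U$) is not what happens in the proof of \Cref{thm:strong_inf}. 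So the promised verbatim reduction is never actually carried out. Step 1 also conflates the differential with the $U$ map. Finiteness of $c_1^{\mathrm{RSFT}}(Y_1,\lambda_1)=L$ simply means there is $\gamma\in RH^L(Y_1,\lambda_1)$ with $U\gamma=[\emptyset]$.

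\textbf{What is missing is a concrete way to exploit the \emph{second} boundary component.} In the paper it goes as follows.
\begin{itemize}
\item Glue a long collar $[0,R]\times Y_2$ onto $X$. This does not change $\lambda_1$, so $\gamma$ keeps action below $L$.
\item Embed in the collar an irrational ellipsoid $E$ whose shortest Reeb orbit has action $2L$.
\item Then $X_R\setminus E$ is a connected strong cobordism from $\partial X_R$ to the \emph{nonempty} end $\partial E$. So $U$-compatibility \eqref{eq:rsftcob} applies with $y_+\in Y_1$ and $y_-\in\partial E$; this is where connectedness enters.
\item Combined with $U_1 j(\gamma)=[\emptyset]$ and preservation of the unit, this gives $[\emptyset]=U_E^L\,\Phi^L(j(\gamma))$ in $RH^L(\partial E)$.
\item But $RH^L(\partial E)\cong\mathbb{Q}\langle[\emptyset]\rangle$ and $U_E$ vanishes there for action reasons, a contradiction.
\end{itemize}

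Your neck-stretching intuition can be turned into a proof by moving the point constraint instead. For the same cobordism, \eqref{eq:rsftcob} gives $\Phi\circ U_{y_1}=U_E\circ\Phi=\Phi\circ U_{y_2}$ with $y_i\in Y_i$. Meanwhile $\mathbf{u}_{y_2}$ annihilates any cycle built only from $Y_1$-variables, since every curve through a point of $\R\times Y_2$ has a positive end on $Y_2$. This forces $[\emptyset]=0$ in $RH(\partial E)$, again a contradiction.

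Either way, you need a concrete step of this kind. You also need a nonempty negative end, or else a separately justified point-constrained augmentation for the filling itself, which is not among the formal properties you assumed. The proposal supplies neither.
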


This proposition of course applies to the Anosov Liouville domains. In particular, this implies that the RSFT capacities of the cotangent disk bundle over a surface of genus at least two are also infinite. Unlike the results for ECH, this can also be applied to certain domains in higher dimensions, including versions of the Liouville pair construction \cite{Gei6,MNW}.

\addtocontents{toc}{\protect\setcounter{tocdepth}{1}}
\subsection*{Outline}
In \Cref{sec:defs}, we recall necessary background on both ECH and Anosov flows, in hope that this paper will be accessible to a reader conversant in one subject but not the other. In \Cref{sec:proof}, we prove our main results, namely \Cref{thm:inf_spec_invts,thm:AnoReeb,thm:AnoHam,cor:Umap}. In \Cref{sec:constructions}, we generalize our results to the setting of hypertaut foliations, prove \Cref{prop:non-orient,prop:AnoHamconstruction}, discuss high-genus Lagrangians, and speculate on elementary capacities.  A construction of RSFT capacities and a proof of \Cref{prop:rat} are given in \Cref{app:rsft}.
	
\subsection*{Acknowledgements} I'd like to thank my advisor Michael Hutchings for his guidance and many enlightening discussions and thank Audrey Rosevear  and Rohil Prasad for very helpful conversations. The contents of this paper benefitted immensely from many lectures and discussions at the SLMath Spring 2026 program on Topological and Geometric Structures in Low Dimensions. I'd also like to thank Michael Hutchings, Josie Hlavinka, Siddhi Krishna, Rohil Prasad, and Audrey Rosevear for their comments on various earlier drafts of this work. Finally, I would like to acknowledge the support of the Natural Sciences and Engineering Research Council of Canada (NSERC) through PGS D-587425-2024. Finalement, je voudrais remercier le Conseil de recherches en sciences naturelles et en g\'{e}nie du Canada (CRSNG) de son soutien.  
\addtocontents{toc}{\protect\setcounter{tocdepth}{2}}

\section{Background on ECH capacities and Anosov flows}
\label{sec:defs}

\subsection{Embedded contact homology}
Embedded contact homology is a version of Floer homology and symplectic field theory for contact 3-manifolds. We give a brief recollection of its definition and properties; many more details can be found in Hutchings' compendium \cite{Hut14}. 

\begin{definition}
	Given a closed oriented 3-manifold $Y$, a 1-form $\lambda$ is called a \emph{contact form} provided $\lambda \wedge\dd\lambda>0$. The pair $(Y,\lambda)$ defines a \emph{(co-oriented) contact structure} given by the 2-plane distribution $\xi =\ker \lambda$, and a \emph{Reeb vector field} $R$ uniquely determined by the conditions $\iota_R \dd\lambda=0$ and $\iota_R \lambda=1$.
\end{definition}
 We will be interested in \emph{Reeb orbits}, which are periodic orbits $\gamma : \R/T\Z \to Y$ of the Reeb flow of $R$ modulo reparametrization of the domain. Associated to such an orbit $\gamma$ is a Poincar\'{e} return map $P_\gamma$, which is the symplectic linear map $\dd \psi^T_R: \xi_{\gamma(0)} \to \xi_{\gamma(0)}$ induced by the linearization of the time $T$ flow $\psi^T_R$ of $R$. We say $\gamma$ is \emph{non-degenerate} if $P_\gamma$ does not have one as an eigenvalue. We assume that $\lambda$ is \emph{non-degenerate}, meaning that all its Reeb orbits are non-degenerate; this is a generic condition achievable by a perturbation of $\lambda$. A non-degenerate orbit $\gamma$ may be either \emph{elliptic} if its eigenvalues $\lambda,\lambda^{-1}$ are unit complex numbers, or \emph{positively} (respectively \emph{negatively}) \emph{hyperbolic} if its eigenvalues are real and positive (resp. negative).

For $\lambda$ non-degenerate, $\Gamma \in H_1(Y;\Z)$, and a certain almost-complex structure $J$ on $\R\times Y$, we can then construct a chain complex $ECC_\ast(Y,\lambda, \Gamma, J)$. 
\begin{definition}
	An \emph{orbit set} for $(Y,\lambda)$ is a finite set of pairs $\{(\gamma_i, m_i)_{i=1}^n\}$, where $\gamma_i$ are distinct embedded Reeb orbits and $m_i$ are positive integer multiplicities. We say an orbit set is \emph{admissible} if $m_i=1$ whenever $\gamma_i$ is hyperbolic. 
\end{definition}
Let $ECC_\ast(Y,\lambda, \Gamma, J)$ be the free $\F_2$-module generated by all admissible orbit sets $\{(\gamma_i, m_i)_{i=1}^n\}$ which represent the homology class 
\[\Gamma = \displaystyle\sum_{i=1}^n m_i [\gamma_i] \in H_1(Y;\Z).\] 

To define a differential, we consider the \emph{symplectization} $\R_t \times Y$ equipped with the symplectic form $\w=\dd{(e^t \lambda)}$. An almost-complex structure $J$ on $\R\times Y$ is called \emph{admissible} if it satisfies $J(\pd_t)=R$, preserves the distribution $\xi$,  satisfies  $\dd\lambda(v, Jv)>0$ for non-zero $v \in \xi $, and is invariant under translation in the $\R$-coordinate.
\begin{definition}
	Given orbit sets $\alpha=\{(\alpha_i, m_i)\}$ and $\beta=\{(\beta_j,n_j)\}$, we define a \emph{$J$-holomorphic current} from $\alpha$ to $\beta$ as a finite set $\mathcal{C}=\{(C_k, d_k)\}$ of distinct irreducible somewhere-injective $J$-holomorphic curves $C_k$ in $\R \times Y$ (modulo biholomorphic reparametrization of the domain) and positive integer multiplicities $d_k$ so that as $t\to \pm \infty$, $\mathcal{C}$ \emph{converges as a current} to $\alpha$ and $\beta$ respectively. By this we mean that the positive asymptotics of the curves $C_k$ counted with multiplicities $d_k$ coincide with covers of the embedded orbits $\alpha_i$ and the number of times $\alpha_i$ appears counted with covering multiplicity agrees with $m_i$ (and identically at the negative ends for the $\beta_i$'s). To such a current, Hutchings associates an \emph{ECH index} $I(\alpha, \beta, [\mathcal{C}]) \in \Z$  which depends only on the relative homology class of $\mathcal{C}$ and its asymptotic orbit sets \cite{Hut14}. The ECH index  determines a relative grading; this can be reduced to a canonical $\Z/2\Z$ grading which measures the parity of the number of positive hyperbolic orbits in an orbit set. 
\end{definition}

Consider an admissible $J$ on $\R\times Y$. Let $\mathcal{M}_1(\alpha,\beta)$ denote the moduli space of all $J$-holomorphic currents from $\alpha$ to $\beta$ which have ECH index one and let $\widetilde{\mathcal{M}_1}(\alpha,\beta)$ denote this space modulo the identification of curves differing by translation in the $\R$-coordinate of $\R_t\times Y$. A combination of facts about 4-dimensional pseudoholomorphic curves guarantees every $\mathcal{C} \in \mathcal{M}_1(\alpha,\beta)$ consists of a (possibly empty) collection of trivial cylinders $\R\times \gamma$ and an irreducible embedded $J$-holomorphic curve of Fredholm index one. In particular, for a generic choice of admissible $J$, $\widetilde{\mathcal{M}_1}(\alpha,\beta)$ is a finite set. The ECH differential is defined to act on an admissible orbit set $\alpha$ by
\[\partial \alpha = \sum_{\beta} \# \widetilde{\mathcal{M}}_1(\alpha,\beta)\cdot \beta,\]
where the sum is taken over all admissible orbit sets $\beta$. Since we work with $\F_2$-coefficients, $\#\widetilde{\mathcal{M}_1}(\alpha,\beta)$ is simply the mod 2 count of points in the moduli space. 

Hutchings--Taubes \cite{HTOBG} show $\pd^2=0$ and so $ECC_\ast(Y,\lambda,\Gamma, J)$ is really a chain complex. Its homology, denoted $ECH_\ast(Y,\xi, \Gamma)$, is called \emph{embedded contact homology}. It turns out these homology groups depend only on the contact structure and not the contact form or almost-complex structure; this is due to Taubes \cite{Tau}. In fact, to the 2-plane field $\xi$ one associates a spin${}^c$-structure $\mathfrak{s}_\xi$ and Taubes shows, for $Y$ connected, that there is a canonical isomorphism 
\[ECH_\ast(Y,\xi, \Gamma) \cong \widehat{HM}^{-\ast}(Y, \mathfrak{s}_\xi + \mathrm{PD}(\Gamma))\]
identifying ECH with the ``from" flavour of monopole Floer cohomology, as defined by Kronheimer--Mrowka \cite{KM}. Here $\mathrm{PD}$ denotes Poincar\'{e} dualizing and we use the fact that the space of spin${}^c$-structures on $Y$ is an affine space modelled on $H^2(Y)$. There is a further  isomorphism
\[ECH_\ast(Y,\xi,\Gamma) \cong HF_\ast^+(-Y, \mathfrak{s}_\xi + \mathrm{PD}(\Gamma)) \]
 to the ``plus" version of Heegaard Floer homology of $-Y$ \cite{CoGH,KLT}.

Note that when $(Y,\xi) = (Y_1, \xi_1)\sqcup \cdots \sqcup (Y_n,\xi_n)$ is disconnected, the basic definitions along with a K\"{u}nneth formula guarantee
\begin{equation}\label{eq:ECHtensor}
	ECH_\ast(Y,\xi ,\Gamma) = ECH_\ast(Y_1,\xi_1,\Gamma_1)\otimes\cdots \otimes ECH_\ast(Y_n,\xi_n, \Gamma_n)
\end{equation}

for $\Gamma= (\Gamma_1,\ldots, \Gamma_n)\in H_1(Y)$.

\subsection{Structures and properties of ECH} \label{ssec:ECHprops}
\subsubsection{The contact invariant}
Given a contact 3-manifold $(Y,\xi)$, there is a distinguished element sensitive to the contact structure $c(Y, \xi) \in ECH(Y,\xi, 0)$ called the \emph{contact invariant}. This is the homology class represented by the empty orbit set $[\emptyset]$. Note that there are no holomorphic curves in $\R \times Y$ without positive ends by Stokes' theorem, so that $\emptyset$ is necessarily a cycle. It may be that $\emptyset$ is also a boundary, in which case $c(Y, \xi)$ vanishes (this holds for example if $\xi$ is overtwisted or contains Giroux torsion), but $c(Y, \xi)$ is always non-trivial when $(Y,\xi)$ is strongly fillable \cite{HutTQFT, Ech}.

There are also contact invariants defined by different means in monopole and Heegaard Floer theories and they all coincide under the  isomorphisms \cite{CoGH, KLT, Tau}.
\subsubsection{The $U$ map}
Assume $Y$ is connected. Given a generic point $y\in Y$, one can define a homomorphism
\[U_y: ECC_\ast(Y,\lambda, \Gamma, J) \to ECC_{\ast-2}(Y,\lambda, \Gamma, J)\]
which counts, in a manner analogous to the differential, ECH index two $J$-holomorphic currents in $\R\times Y$ constrained to pass through $(0,y)\in\R \times Y$. This is a chain map and given two generic points $y_1, y_2$, the induced maps $U_{y_1}, U_{y_2}$ are chain homotopic. Thus we obtain a canonical ``$U$ map"
\[U: ECH_\ast(Y,\xi, \Gamma) \to ECH_{\ast-2}(Y,\xi, \Gamma).\]
By Stokes' theorem, one always has $c(Y,\xi)\in\ker(U)$.

There are also $U$ maps on monopole and Heegaard Floer homologies  and the isomorphisms are maps of $\F_2[U]$-modules \cite{CoGH, KLT, Tau}.

If $Y$ is disconnected, there will be a $U$ map associated to each connected component. Under the isomorphism \eqref{eq:ECHtensor}, the $U$ map for the $i$th component of $Y_1\sqcup\cdots \sqcup Y_n$ is given by the $U$ map on $ECH(Y_i)$ tensor the identity in other components. 

\subsubsection{Cobordism maps}
ECH, like other Floer theories, has functorial properties similar to a topological quantum field theory. Suppose $(X,\w)$ is a compact connected exact symplectic cobordism from a contact 3-manifold $(Y_1,\lambda_1)$ to another $(Y_2,\lambda_2)$ with underlying contact structures $\xi_1$ and $\xi_2$ respectively. We can glue cylindrical ends to $X$ to form its \emph{symplectic completion}
\[ (\widehat{X}, \hat{\w}) = ((-\infty, 0]_t\times Y_2, \dd{(e^t \lambda_2)})\, \bigcup_{Y_2} \, (X,\w)\, \bigcup_{Y_1} \,([0,\infty)_t\times Y_1, \dd{(e^t\lambda_1)}).\]
 An almost-complex structure $J$ on $\widehat{X}$ is \emph{cobordism-compatible} if it restricts to an $\w$-compatible almost-complex structure on $X$ and on the cylindrical ends agrees with the restriction of admissible almost-complex structures $J_1$ and  $J_2$ on $\R\times Y_1$ and $\R\times Y_2$.

For any $A\in H_2(X,\pd X)$ and a generic cobordism-compatible almost-complex structure $J$, there is an associated $\Z/2$-module homomorphism
\[\phi(X,\w, A, J): ECC_\ast(Y_1,\lambda_1, \pd_1 A, J_1)\to ECC_\ast(Y_2,\lambda_2, \pd_2A, J_2)\]
where $\pd_{1} A$ and  $\pd_2 A$ denote the homology classes in $H_1(Y_1)$ and $H_1(Y_2)$ obtained from $A$ under the map $H_2(X,\pd X) \to H_1(\pd X)$ in the long exact sequence of the pair $(X,\pd X)$.  This cobordism map is a chain map inducing a canonical homomorphism on homology
\[\Phi(X,\w, A) : ECH_\ast(Y_1,\xi_1, \pd_1 A) \to ECH_\ast(Y_2, \xi_2, \pd_2 A).\] 
 This map satisfies the important properties
\[\Phi(X,\w, 0) [\emptyset] = [\emptyset] \qand \Phi(X,\w,A) \circ U_1 = U_2\circ \Phi(X,\w, A),\]
where $U_1$ and $U_2$ denote the $U$ maps associated to (any connected component of) $Y_1$ and $Y_2$ respectively.

Defining the map $\Phi$ directly by counting pseudoholomorphic curves runs into major difficulties because multiply-covered curves of negative ECH index sometimes appear in the cobordism. Instead, the maps $\Phi$ were constructed by Hutchings--Taubes \cite{HTCob} by first identifying ECH with monopole Floer homology and using the cobordism map defined for that theory. Strictly speaking, this only makes sense when $Y_1$ and $Y_2$ are connected because monopole Floer homology has only been defined for connected manifolds. But one can still define the cobordism map in general by directly perturbing the Seiberg--Witten equations (see \cite[Rmk.\!\ 1.12]{HTCob} and the proof of \cite[Thm.\!\ 2.3]{HutQ}). 

While it is not clear in general how holomorphic curves are counted by the cobordism map, Hutchings--Taubes at least prove the crucial property that the cobordism map is \emph{witnessed by holomorphic currents} \cite[Thm.\!\ 1.9]{HTCob}. More precisely, if 
\[\langle \phi(X,\w, A, J)\, \alpha, \beta\rangle \neq 0,\]
then there must be a broken $J$-holomorphic current (i.e. a finite sequence of $J_1$-holomorphic currents in $\R \times Y_1$, a $J$-holomorphic current in the completion of $X$, and a finite sequence of $J_2$-holomorphic currents in $\R \times Y_2$, so that consecutive currents have matching negative and positive asymptotic orbit sets) from $\alpha$ to $\beta$ in the homology class $A$ of total ECH index zero.

In unpublished work of Hutchings \cite{HutTQFT}, this story is extended to general strong symplectic cobordisms which need not be exact. That is, given a strong symplectic cobordism $(X,\w)$ between $(Y_1,\lambda_1)$ and $(Y_2,\lambda_2)$, one obtains an ECH cobordism map satisfying all the properties listed above.

\begin{remark}
	The cobordism maps in monopole Floer and ECH are compatible with the isomorphism between the two theories, essentially by definition. We warn the reader that it is not yet known whether the isomorphisms of monopole Floer and ECH with Heegaard Floer also respect cobordism maps. For example, it is only known that the Heegaard Floer cobordism map sends the contact invariant to the contact invariant in the case of Stein cobordisms.
\end{remark}

\subsubsection{Action filtration}
Associated to a Reeb orbit $\gamma$ of $(Y,\lambda)$ is its action $\mathcal{A}(\gamma) = \int_\gamma \lambda$, which measures the length/period of the orbit. We can then define the action of an orbit set as
\[\mathcal{A}\Big(\left\{\big(\gamma_i,m_i\big)_{i=1}^n\right\}\Big) =\sum_{i=1}^n m_i \int_{\gamma_i} \lambda.\]
This gives the chain complex $ECC(Y,\lambda, J, \Gamma)$ an $\R$-filtration; i.e., for $L\in \R$, we let $ECC^{L}(Y,\lambda, J, \Gamma)$ denote the sub-module generated by orbit sets with action strictly less than $L$.

The crucial observation is that Stokes' theorem implies if $\mathcal{C}$ is a $J$-holomorphic current in $\R \times Y$ from $\alpha$ to $\beta$ with admissible $J$, then 
\[\mathcal{A}(\alpha)-\mathcal{A}(\beta) = \int_{\mathcal{C}} \dd\lambda \geq 0.\]
In particular, the ECH differential is action decreasing and so $ECC^L(Y,\lambda,J, \Gamma)$ is a subcomplex whose homology we denote $ECH^L(Y,\lambda, \Gamma)$. While this depends heavily on $\lambda$, it turns out to be independent of $J$. This gives ECH the structure of a \emph{persistence module}, meaning for any $L<L'$, there is a homomorphism
\[\iota_{L,L'} : ECH^L(Y,\lambda, \Gamma) \to ECH^{L'}(Y,\lambda,  \Gamma)\]
induced by inclusion on the chain-level. These maps are functorial under composition and for all but a discrete set of $L$, the map $\iota_{L,L+\varepsilon}$ is an isomorphism when $\varepsilon$ is sufficiently small. We can recover the full ECH as a colimit 
\[ECH(Y,\xi,\Gamma) = \lim_{L\to\infty} ECH^L(Y,\lambda,\Gamma).\] 

Note because the $U$ map is also defined by counting $J$-holomorphic currents, it is compatible with the $\R$-filtration and induces maps
\[U^L: ECH^L(Y,\xi,\Gamma) \to ECH^L(Y,\xi ,\Gamma)\]
which satisfy $U^{L'}\circ \iota_{L,L'} = \iota_{L,L'} \circ U^L$.

Given an exact cobordism, because the ECH cobordism map is witnessed by holomorphic currents, Stokes' theorem again implies the ECH cobordism map is compatible with the $\R$-filtration. For strong but non-exact cobordisms this will not hold, since there may be an ``energy defect"  determined by how the symplectic form pairs with the homology class of the holomorphic current. However, we will only be interested in  the cobordism map $\Phi(X,\w,0 )$ associated to the zero homology class, in which case $\Phi$ is a map of $\R$-filtered modules. We state this as a proposition, which is a special case of what Hutchings shows.

\begin{proposition}[\cite{HutTQFT}]
Let $(X,\w)$ be a connected strong symplectic cobordism from $(Y_1,\lambda_1)$ to $(Y_2,\lambda_2)$, both closed non-degenerate contact 3-manifolds. For each $L> 0$, there is a canonical homomorphism
	\[\Phi^L(X,\w, 0): ECH^L(Y_1,\lambda_1,0)\to ECH^L(Y_2,\lambda_2,0)\]
	satisfying the following properties:
	\begin{enumerate}[label=\textbf{\arabic*.}, leftmargin = 0.8cm]
		\item  For each $L>0$,  \[\Phi^L(X,\w,0) [\emptyset] = [\emptyset].\]
		\item For $L'>L$, 
		 \[\Phi^{L'}(X,\w,0)\circ \iota_{L,L'} = \iota_{L,L'} \circ \Phi^L(X,\w,0).\]
		 \item For each $L>0$, 
		  \[\Phi^L(X,\w,0)\circ U_1^L = U_2^L \circ \Phi^L(X,\w,0),\]
		  where $U_1, U_2$ denote $U$ maps on any connected component of $Y_1$ and $Y_2$ respectively.
		 \item Given a generic cobordism-compatible almost-complex structure $J$, the cobordism map $\Phi^L(X,\w,0)$ is induced from a chain-level homomorphism 
		  \[\phi^L(X,\w,0, J): ECC^L(Y_1,\lambda_1,0, J_1)\to ECC^L(Y_2,\lambda_2,0, J_2)\] 
		  which is witnessed by holomorphic currents of ECH index zero representing the trivial homology class $0\in H_2(X,\pd X)$.
	\end{enumerate}
	\begin{proof}
		As with the exact case, the cobordism map is constructed by passing through Seiberg--Witten theory. The proof is given in \cite{HutTQFT}. We will just explain why this cobordism map associated to the zero homology class will preserve the $\R$-filtration. Suppose $\alpha$ is an orbit set of $Y_1$ and $\beta$ is an orbit set of $Y_2$ so that, for some generic cobordism-compatible $J$,
		\[\langle \phi(X,\w,0,J)\alpha,\beta\rangle \neq 0.\]
		We know $\phi$ is witnessed by a broken $J$-holomorphic current $\mathcal{C}$ whose total homology class $[\mathcal{C}]=0 \in H_2(X,\pd X)$. This means $\mathcal{C}$ (considered as a smooth chain in $X$) can be written as the sum of the boundary of a $3$-chain $\mathcal{B}$ contained in $X$ and $2$-chains $\mathcal{D}_1$ and $ \mathcal{D}_2$ contained in the cylindrical ends $\R\times Y_1$ and $\R \times Y_2$ respectively with positive (resp. negative) boundary at $\alpha$ (resp. $\beta$). Then we have
		\begin{align*}
			\int_\mathcal{C} \w &= \int_{\pd \mathcal{B}}\w + \int_{\mathcal{D}_1} \dd \lambda_1 + \int_{\mathcal{D}_2} \dd\lambda_2.
			\shortintertext{By Stokes' theorem,}
			&= \int_{\mathcal{B}} \dd\w + \int_{\alpha} \lambda_1 -\int_{\beta} \lambda_2\\
			&= \mathcal{A}(\alpha)-\mathcal{A}(\beta).
		\end{align*}  
		On the other hand, because $J$ is $\w$-compatible,
		\[\mathcal{A}(\alpha)-\mathcal{A}(\beta)=\int_{\mathcal{C}} \w \geq 0.\]
	\end{proof}
\end{proposition}

\subsubsection{ECH of an ellipsoid}
One explicit computation of ECH will be relevant to us. Consider the 4-dimensional ellipsoid for $a,b>0$ defined by
\[E(a,b) = \left \{(z_1, z_2)\in \C^2: \frac{\pi |z_1|^2}{a}+\frac{\pi |z_2|^2}{b} \leq 1\right\}. \]
This inherits a natural symplectic form $\w_\std = \dd x_1\wedge \dd y_1 + \dd x_2\wedge \dd y_2$ restricted from $\C^2$. The Liouville primitive
\[\lambda_\std = \frac{1}{2}\left(x_1 \dd{y_1} -y_1\dd{x_1} + x_2\dd{y_2} - y_2\dd{x_2}\right)\] 
restricts to a contact form on the boundary of $E(a,b)$ inducing the standard tight contact structure $\xi$ on $S^3$.

The Reeb vector field on $\pd E(a,b)$ can be written explicitly in polar coordinates on $\C^2$ as
\[R = \frac{2\pi}{a}\pdv{\theta_1} +\frac{2\pi}{b}\pdv{\theta_2}.\]
When $a/b\notin \Q$, $R$ has exactly two embedded orbits which we denote $\gamma_1$ and $\gamma_2$. The orbits are confined to the planes $\C\times \{0\}$ and $\{0\}\times \C$  and have periods $a$ and $b$ respectively; they are both non-degenerate and elliptic.

We can thus label the generators of $ECC(\pd E(a,b), \lambda_\std, 0)$ as $\gamma_1^{m_1}\gamma_2^{m_2}$ for any pair of non-negative integers $(m_1,m_2)$ corresponding to the orbit set $\{(\gamma_1,m_1), (\gamma_2,m_2)\}$. The ECH differential vanishes because all the generators live in even gradings, and so $ECH(\pd E(a,b),\lambda_\std ,0)$ is the free $\Z/2$-module generated by orbit sets $\gamma_1^{m_1}\gamma_2^{m_2}$. The action of an orbit set will be
\[\mathcal{A}(\gamma_1^{m_1}\gamma_2^{m_2}) = m_1a + m_2b.\]
These numbers are distinct for distinct pairs $(m_1,m_2)$ because $a$ and $b$ are rationally linearly independent. One finds, via Seiberg--Witten theory or a holomorphic curve computation \cite{Hut14}, that the $U$ map acts on a generator $\gamma_1^{m_1}\gamma_2^{m_2}$ by sending it to the generator of next smallest action. It sends the empty orbit set $[\emptyset]= c(S^3,\xi)$ to zero.

\subsection{ECH spectrum and capacities}
To a non-degenerate contact 3-manifold $(Y,\lambda)$, Hutchings \cite{HutQ} associates an increasing sequence of numerical invariants
\[0= c_0(Y,\lambda) < c_1(Y,\lambda) \leq c_2(Y,\lambda) \leq \cdots \leq +\infty.\]
\begin{definition}
	The $k$th element of the \emph{ECH spectrum} is defined by
\[c_k(Y,\lambda) := \inf \left\{L >0\,  \Big |\,[\emptyset] \in \im\Big(U^k: ECH^L(Y,\lambda, 0)\to ECH^L(Y,\lambda, 0)\Big) \right\}.\]
When $Y$ is disconnected, this means that $[\emptyset]$ should be in the image of any $k$-fold composition of $U$ maps associated to the different components of $Y$. One observes
\begin{equation}\label{eq:specdisj}
c_k((Y_1,\lambda_1)\sqcup (Y_2,\lambda_2)) = \max_{i+j=k} \left(c_i(Y_1,\lambda_1)+c_j(Y_2,\lambda_2)\right).	
\end{equation}

We may extend the definition to degenerate contact 3-manifolds $(Y, \lambda)$. Let $\{\lambda_n\}_{n=1}^\infty$ be a sequence of non-degenerate contact forms on $Y$ which $C^\infty$-converge to $\lambda$. Then we set $c_k(Y,\lambda)$ to be the limit of the sequence $c_k(Y,\lambda_n)$ as $n\to \infty$. This is independent of the choice of approximating non-degenerate sequence.
\end{definition}

The ECH spectrum are spectral invariants in the sense that each number $c_k(Y,\lambda)$ is equal to the action of some (null-homologous) orbit set of $(Y,\lambda)$. Note that the $k$th element of the ECH spectrum is finite if and only if the contact invariant $c(Y,\xi)$ is in the image of $U^k$. In particular, finiteness of the ECH spectrum depends only on the contact structure while the precise values depend heavily on the choice of contact form. 

We can then associate to a symplectic 4-manifold $(X,\w)$ a sequence of capacities:
\[0= c_0^{\mathrm{ECH}}(X,\w) < c_1^{\mathrm{ECH}}(X,\w) \leq c_2^{\mathrm{ECH}}(X,\w) \leq \cdots \leq +\infty.\]
\begin{definition} 
Suppose $(X,\w)$ is a weak Liouville domain with contact-type boundary $(Y,\lambda)$. We  define its \emph{$k$th ECH capacity} to be
\[c_k^{\mathrm{ECH}}(X, \w) := c_k(Y,\lambda).\]
If $(X,\w)$ is  an arbitrary symplectic 4-manifold, we declare its $k$th ECH capacity to be the supremum of the $k$th ECH capacity of any weak Liouville domain $(X',\w')$ which symplectically embeds into $(X,\w)$.
\end{definition} 

We list some important properties of the ECH capacities.

\begin{proposition}[\cite{HutQ, Weyl}]
	The ECH capacities have the following properties.
	\begin{enumerate}[label=\textbf{\arabic*.}, leftmargin = 0.8cm]
		\item Given a symplectic embedding $(X,\w) \hookrightarrow (X',\w')$, the ECH capacities are monotone: 
		\[ c_k^{\mathrm{ECH}}(X,\w) \leq c_k^{\mathrm{ECH}}(X',\w')\quad \text{ for all $k$.} \]
		\item The capacities scale linearly with symplectic area: 
		\[\text{for any $s>0$,} \quad c_k^{\mathrm{ECH}}(X,s\w) = s c_k^{\mathrm{ECH}}(X,\w) \quad \text{for all $k$.}\]
		\item The ECH capacities satisfy a disjoint union property:
		\[c_k^{\ECH}\left(\coprod_{i=1}^n (X_i,\w_i)\right) = \max_{k_1+\cdots +k_n=k} \sum_{i=1}^n c_{k_i}^\ECH(X_i,\w_i).\]
		\item The ECH capacities satisfy a ``Weyl Law": if $(X,\w)$ is a weak Liouville domain whose ECH capacities are finite, then 
		\[\lim_{k\to \infty} \frac{c_k^{\mathrm{ECH}}(X,\w)^2}{4k} =  \mathrm{vol}(X,\w).\]
	\end{enumerate}
\end{proposition}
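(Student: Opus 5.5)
The properties are due to Hutchings and Cristofaro-Gardiner--Hutchings--Ramos; I would recover the first three from the formal ECH package recalled above and cite the fourth. The plan is to prove everything first for weak Liouville domains with non-degenerate boundary. I then pass to degenerate boundaries by $C^\infty$-approximation of contact forms, using that $c_k$ is defined as a limit. Finally I pass to arbitrary symplectic 4-manifolds through the supremum in the definition.

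\textbf{Scaling.} Replacing $\w$ by $s\w$ replaces the boundary form $\lambda$ by $s\lambda$. The Reeb vector field becomes $s^{-1}R$, so the embedded Reeb orbits are unchanged as sets and their actions are multiplied by $s$. An admissible $J$ for $\lambda$ can be pulled back under the rescaling $t\mapsto t/s$ of the $\R$-coordinate to give an admissible $J$ for $s\lambda$. Then $ECC(Y,s\lambda,0)$ is canonically isomorphic to $ECC(Y,\lambda,0)$, with the differential and the $U$ maps intertwined. Under this isomorphism the filtration level $sL$ corresponds to level $L$. Reading off the infimum in the definition of $c_k$ gives $c_k(Y,s\lambda)=s\,c_k(Y,\lambda)$.

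\textbf{Disjoint union.} For domains this is \eqref{eq:specdisj}. That formula follows from the filtered version of the Künneth isomorphism \eqref{eq:ECHtensor}: actions add under tensor product, and the $U$ map of each component acts only on its own tensor factor. With this, $[\emptyset]\otimes[\emptyset]$ lies in the image of a $k$-fold composition of component $U$ maps exactly when it splits as $U_1^{i}a\otimes U_2^{j}b$ with $i+j=k$. For general $(X_i,\w_i)$ I would take suprema over weak Liouville subdomains of each factor. Any weak Liouville domain in the disjoint union has its components distributed among the $X_i$, so it is a disjoint union of weak Liouville subdomains of the factors and is handled by the domain case.

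\textbf{Monotonicity.} This is the main step. Let $\varphi:(X,\w)\hookrightarrow(X',\w')$ be an embedding of weak Liouville domains, and shrink $X$ slightly along its Liouville collar so that $\varphi(X)$ lies in the interior of $X'$. Then $W=X'\setminus\varphi(\mathrm{int}\,X)$ is a strong symplectic cobordism from $(\pd X',\lambda')$ to $(\pd X,\lambda)$, because near both boundary components $\w'$ agrees with $\dd\lambda'$ and $\dd\lambda$ respectively. The cobordism may be disconnected, but every component contains a component of $\pd X'$; I would handle the general case componentwise using the $U$ maps on each component. Now suppose $[\emptyset]=U^k\sigma$ in $ECH^L(\pd X',\lambda',0)$. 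The filtered map $\Phi^L(W,\w',0)$ from the Proposition of \cite{HutTQFT} sends $[\emptyset]$ to $[\emptyset]$ and commutes with $U$. Hence $[\emptyset]=U^k\Phi^L(\sigma)$ in $ECH^L(\pd X,\lambda,0)$, which gives $c_k(\pd X)\le c_k(\pd X')$. Taking the limit over the shrinking recovers the unshrunk $X$, since shrinking along the Liouville collar corresponds to rescaling $\lambda$ by a factor tending to $1$, and we can then apply the scaling property. For arbitrary targets, monotonicity follows directly from the supremum definition, and the domain case shows the two definitions agree on weak Liouville domains.

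\textbf{Weyl law.} I would not reprove this; I would cite \cite{Weyl}. Its proof compares the grading of $U^k$-preimages of $[\emptyset]$ with Seiberg--Witten spectral flow estimates under Taubes' isomorphism. This is by far the deepest ingredient and the natural obstacle if one wanted a self-contained argument. Among the formal properties, the delicate points are the filtration compatibility of cobordism maps for non-exact cobordisms, which the Proposition above supplies for the zero class, and the compatibility of the approximation arguments with degenerate boundaries.
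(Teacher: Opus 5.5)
The paper gives no proof of this proposition; it is quoted from \cite{HutQ, Weyl}. So the relevant comparison is with Hutchings' original arguments, and your treatment of items 1--3 reproduces them. Your monotonicity step is also the same template the paper runs for RSFT capacities in \Cref{lem:rsftcapprops} and inside the proof of \Cref{thm:strong_inf}, and citing \cite{Weyl} for item 4 matches the paper.

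Two small points on monotonicity:
\begin{itemize}
\item Since $\w'$ is exact, $W=X'\setminus\varphi(\mathrm{int}\,X)$ is weakly exact. The published weakly exact cobordism maps of \cite{HutQ, HTCob} therefore suffice, and you do not need the unpublished \cite{HutTQFT}.
\item Your claim that every component of $W$ meets $\pd X'$ needs its one-line proof. A component with only concave boundary would be a compact exact symplectic manifold, and Stokes would give $\int\w'^2=-\int\lambda\wedge\dd\lambda<0$. (The global primitive differs from $\lambda$ on the boundary by a closed form, which does not change this integral.) That is impossible.
\end{itemize}

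The one step that is not adequately justified is the lower bound in the disjoint union formula. You say $[\emptyset]\otimes[\emptyset]$ lies in the image of $U_1^iU_2^j$ on $ECH^L$ ``exactly when it splits as $U_1^ia\otimes U_2^jb$''. One direction is clear: tensoring preimages from $ECH^{L_1}(Y_1)$ and $ECH^{L_2}(Y_2)$ gives $c_k\le\max_{i+j=k}(c_i+c_j)$.

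The converse is the real content, and ``actions add'' does not deliver it. A preimage $x\in ECH^L(Y_1\sqcup Y_2)$ is represented by a cycle $\sum_m\alpha_m\otimes\beta_m$. Its action splittings $\mathcal{A}(\alpha_m)+\mathcal{A}(\beta_m)<L$ vary with $m$, and the individual factors need not be cycles. So additivity alone does not produce classes $a\in ECH^{L_1}(Y_1)$ and $b\in ECH^{L_2}(Y_2)$ with $L_1+L_2\le L$, $U_1^ia=[\emptyset]$ and $U_2^jb=[\emptyset]$. Nor is there a naive splitting of $ECH^L(Y_1\sqcup Y_2)$ into pieces $ECH^{L_1}\otimes ECH^{L_2}$ with $L_1+L_2=L$ that you could appeal to.

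What is needed is an honest filtered K\"unneth argument. One route is to put both filtered complexes in barcode normal form, so the filtered tensor complex splits into summands indexed by pairs of bars. You would then use functionals that detect $[\emptyset]$ modulo $U^i$-images at the relevant action levels. This must handle the fact that the $U$ maps do not respect the bar decomposition. The paper simply asserts \eqref{eq:specdisj} (``One observes''), so relying on it is consistent with the paper, but the justification you give covers only the easy inequality.
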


\subsection{Anosov and Reeb Anosov flows} \label{ssec:Ano}
We will recall the essential (for us) aspects of Anosov flows and their interaction with contact geometry. Many more details of this theory are explained by Fisher--Hasselblatt \cite{FHbook} and in recent surveys of Massoni and Potrie \cite{Mas2,Pot}. 

Consider a positive hyperbolic matrix $A\in \SL(2,\R)$ with real eigenvalues $\lambda > 1>\lambda^{-1}$. Under iteration of $A$, vectors in $\R^2$ will exponentially expand along one eigendirection and exponentially contract along the other. Given a hyperbolic surface $\Sigma$ and the corresponding geodesic flow on its cotangent circle bundle $S^\ast \Sigma$, a similar phenomenon occurs. As one moves along a geodesic, in a plane transverse to the flow, one sees exponential expansion and contraction in complementary directions; Anosov flows generalize this behaviour. 

\begin{definition}
	An \emph{Anosov flow} on a 3-manifold $Y$ is a flow $\psi^t$ generated by a smooth non-vanishing vector field $X$ so that there is a  $\psi^t$-invariant splitting
\begin{equation}\label{eq:Ano}
	TY = \langle X \rangle\oplus  E^{ss}\oplus  E^{uu}
\end{equation}
for which there exists $C, \mu > 0$ so that for every $v \in E^{ss}$ and $w\in E^{uu}$,
\begin{equation}\label{eq:Ano2}\| \psi^t_\ast v\| \leq  C e^{- \mu t} \|v\| \qand \|\psi^t_\ast w\|\geq C^{-1}e^{\mu t}\|w\| \quad \text{for all $t\geq 0$.}
\end{equation}	

The distributions $E^s = \langle X \rangle\oplus E^{ss}$ and $E^u= \langle X \rangle\oplus E^{uu}$ are uniquely integrable to 2-dimensional foliations $\mathcal{F}^s$ and $\mathcal{F}^u$ called the (weak) \emph{stable and unstable foliations}. These foliations are leafwise $C^\infty$-smooth, but the transverse regularity as an assemblage of leaves is usually only $C^1$-differentiable. The weak foliations are also always \emph{taut}, meaning for every $p\in Y$ there is a closed loop $\gamma:S^1\to Y$ based at $p$ and transverse to the foliation.

The Anosov flow $X$ is \emph{oriented Anosov} if $E^{ss}$ and $E^{uu}$ are orientable, in which case we always pick orientations for the invariant splitting \eqref{eq:Ano} to be compatible with the orientation of $Y$. We will assume all our Anosov flows are oriented unless otherwise stated. Given a non-orientable Anosov flow, one can always pass to a double cover of $X$ to obtain an oriented flow.
\end{definition}

The constant $C$ in equation \eqref{eq:Ano2} will depend on a choice of metric $g$ used to define the norm $\|\cdot\|$ on tangent vectors. Given any metric $g$, one can average it over the Anosov flow $\psi^t$ for a sufficiently long timescale to produce a new metric for which we may take $C=1$ \cite{Mit}. Such a metric is called \emph{adapted}. 

An important motivation for defining Anosov flows is that they are \emph{structurally stable}: given a vector field $X$ generating an Anosov flow, $C^1$-small perturbations of $X$ also generate an Anosov flow and one which will be orbit-equivalent to $X$ \cite{FHbook}. Here, we say that flows on $Y$  are \emph{orbit equivalent} if there is a homeomorphism of $Y$ mapping (oriented but unparametrized) orbits to orbits. A central problem in the study of Anosov flows is the \emph{finiteness conjecture}, which asserts every closed 3-manifold can only admit finitely many Anosov flows up to orbit equivalence.

If $Y$ has an Anosov flow, then its universal cover $\tilde{Y}$ is homeomorphic to $\R^3$ and the weak stable and unstable foliations $\mathcal{F}^u$ and $\mathcal{F}^s$ lift in $\tilde{Y}$ to foliations $\tilde{\mathcal{F}}^u$ and $\tilde{\mathcal{F}}^s$ with leaves homeomorphic to $\R^2$ \cite{FHbook}. 
\begin{definition}
	The \emph{orbit space} $\mathcal{O}_\psi$ of an Anosov flow $\psi$ on $Y$ is the quotient of the universal cover $\tilde{Y}\cong \R^3$  by the lifted flow. Barbot \cite{Bar} and Fenley \cite{Fen} show that $\mathcal{O}_\psi$ is homeomorphic to a plane, bifoliated\footnote{This means  $\mathcal{O}_\psi$ possesses two foliations which meet transversely everywhere.} by the lifts $\tilde{\mathcal{F}}^u$ and $\tilde{\mathcal{F}}^s$ modulo $\psi$. 
	
	The Anosov flow $\psi$ is \emph{$\R$-covered} if the space of leaves of one (equivalently both) of the foliations $\tilde{\mathcal{F}}^u$ or $\tilde{\mathcal{F}}^s$ is Hausdorff (equiv.\!\ homeomorphic to $\R$). Such flows are always \emph{transitive}, meaning they have a dense orbit. By Barbot and Fenley, the orbit space $\mathcal{O}_\psi$ of an $\R$-covered Anosov flow $\psi$ considered as a bi-foliated plane can either be homeomorphic to $\R^2$ with foliations by vertical and horizontal lines, or the diagonal strip $\{(x,y)\in \R^2: |x-y|<1\}$ with foliations by vertical and horizontal line segments. In the former case, we call $\psi$ a \emph{product Anosov flow} and it must be orbit equivalent to a suspension flow (see \Cref{ex:Anos}). In the latter case, we call $\psi$ a \emph{skew Anosov flow}.
	 
\end{definition}

\begin{definition}
	A \emph{Reeb Anosov flow} on $Y$ is an Anosov flow which is generated by the Reeb vector field of a contact form on $Y$. We say a contact structure $(Y,\xi)$ is an \emph{Anosov contact structure} if $\xi$ has a contact form whose Reeb flow is Reeb Anosov for an oriented Anosov flow. All Reeb Anosov flows are $\R$-covered and skew \cite{FHbook}.
\end{definition}
Recent work of Cristofaro-Gardiner--Prasad \cite{CGP} implies that, provided $c_1(\xi)$ is torsion, a Reeb flow for $(Y,\xi)$ is Anosov if and only if the closure in $Y$  of its set of periodic orbits is uniformly hyperbolic.

\begin{example}\label{ex:Anos}
The following are the two most important examples of Anosov flows.
\begin{enumerate}[label=\textbf{\arabic*.}, leftmargin = 0.8cm]

\item Geodesic flow on the cotangent circle bundle $S^\ast \Sigma$ of any surface of negative curvature defines an Anosov flow. The flow will be oriented when $\Sigma$ is oriented. This is a Reeb Anosov flow for the standard contact form on the cotangent circle bundle, hence it is $\R$-covered and skew.
\item Any hyperbolic matrix $A\in \mathrm{SL}(2,\Z)$ defines an area-preserving diffeomorphism of the 2-torus $T^2\cong\R^2/\Z^2$. Let 
\[M_A= [0,1]_t \times T^2\Big/(0,Ax)\sim (1, x)\] 
denote the mapping torus of $A$. The suspension flow induced by the vector field $\pd_t$ on $M_A$ is Anosov. The flow will be oriented exactly when the eigenvalues of $A$ are positive, or equivalently $\tr(A)>2$. The same holds for the suspension flow of non-linear Anosov diffeomorphisms of $T^2$. These flows are $\R$-covered product flows. That means they are not skew and hence never Reeb Anosov.
\end{enumerate}
\end{example}
\begin{definition}
An Anosov flow is \emph{algebraic} if a finite cover of the flow is either hyperbolic geodesic flow or the suspension flow of an Anosov diffeomorphism of $T^2$. 
\end{definition}

\begin{remark}\label{rmk:taut}
From the structure of the orbit space, we see that for any Anosov flow on $Y$, the leaves of the weak foliations $\mathcal{F}^u$ and $\mathcal{F}^s$ must $\pi_1$-inject into $Y$. The Anosovity condition implies that each leaf contains at most one closed simple orbit of the Anosov flow and moreover that such an orbit (and its covers) can never be contractible in the leaf and hence in $Y$. Thus (Reeb) Anosov flows have no contractible closed orbits. Three-dimensional contact structures with such a Reeb flow are called \emph{hypertight} and are in particular tight (in fact universally tight).

	Since $\mathcal{F}^u$ and $\mathcal{F}^s$ are taut foliations, Novikov's theorem implies they can never be transverse to a contractible closed curve. Hence if a contact structure admits a Reeb flow transverse to these foliations it must also be hypertight. 
\end{remark}

Reeb Anosov flows have been studied in many cases, including by Foulon--Hasselblatt--Vaugon \cite{FHV}, Macarini--Paternain \cite{MP}, and Barthelm\'{e}--Mann \cite{BarM}. Recently, Chaidez--Pan \cite{CP} introduced the notion of pseudo-Anosov Reeb flows and pseudo-Anosov contact structures.

In light of \Cref{thm:AnoReeb}, we elaborate a bit on the abundance of Reeb Anosov flows. Many Reeb Anosov flows can be built using a surgery construction described by Foulon--Hasselblatt \cite{FH} and studied further by Foulon--Hasselblatt--Vaugon \cite{FHV}. In this procedure, one begins with a Reeb Anosov flow on $(Y,\xi)$ and a Legendrian knot $L\subset Y$ transverse to $E^{ss}$ and $E^{uu}$. One can then perform Dehn surgery on $L$ for certain slopes so that the resulting three manifold also has a Reeb Anosov flow. Such knots $L$ can be found in $(S^\ast\Sigma_g, \xi_{\can})$ as a section of the circle bundle over a closed geodesic in $\Sigma_g$ (this subsumes the Handel--Thurston surgery construction).  These surgeries can produce Reeb Anosov flows not orbit equivalent to an algebraic flow, including Reeb Anosov flows on hyperbolic 3-manifolds. Salmoiraghi \cite{Sal} has generalized this operation to show that every Anosov contact manifold admits infinitely many Legendrian knots along which $1/q$-Dehn surgery for $q\in \N$ always has a Reeb Anosov flow. 

Marty \cite{Marty} shows that an Anosov flow is $\R$-covered and skew if and only if it is orbit equivalent to a Reeb Anosov flow, so the zoology of Reeb Anosov flows is at least as complex as that of skew $\R$-covered Anosov flows. Together with work of Barthelm\'{e}--Mann \cite{BarM}, one has that two skew $\R$-covered Anosov flows are orbit equivalent if and only if their associated Reeb Anosov flows are associated to contactomorphic contact structures.  In turn, skew Anosov flows admit a group-theoretic classification due to Thurston \cite{Thu-fol}. 

We also remark that structural stability implies that if $\lambda$ is a contact form on $Y$ whose Reeb flow is Anosov, then for $C^2$-small functions $f: Y\to \R$, the contact form $e^f\lambda$ also has an Anosov Reeb flow. These two Reeb flows are orbit equivalent, but their quantitative dynamics may look quite different; for example, the ECH spectral invariants will certainly vary with $f$ (consider \cite{Weyl, Iri}).

Because there are only finitely many tight contact structures on a 3-manifold which are strongly fillable, there can only be finitely many Anosov contact structures on a given manifold (this implies the finiteness conjecture for $\R$-covered Anosov flows). However, Barthelm\'{e}--Mann--Bowden \cite[Thm.\! A.7]{BarM} show that Foulon--Hasselblatt surgery can produce hyperbolic 3-manifolds with at least any arbitrarily large finite number of non-contactomorphic Anosov contact structures. 

\subsection{Anosov Liouville domains} \label{ssec:AL}
To apply \Cref{thm:inf_spec_invts}, we are interested in symplectic manifolds that have disconnected convex boundary. So far, the richest source of examples known in all dimensions comes from the following structure.
\begin{definition}
An \emph{(exponential) Liouville pair} $(\alpha_1, \alpha_2)$ on $Y$ is a pair of oppositely oriented contact forms so that   
\[\lambda = e^{t} \alpha_1 + e^{-t} \alpha_2\]	
defines a Liouville 1-form on $\R_t \times Y$. 
\end{definition}

Work of Eliashberg--Thurston \cite{ET} and Mitsumatsu \cite{Mit} uncovered a close connection between Liouville pairs and Anosov flows. This relationship has been fleshed out in recent years by work of Hozoori and Massoni \cite{Hoz, Mas, Mas2}. 

For our purposes, the main upshot of this theory is that given an oriented Anosov flow on a 3-manifold, we can always associate to it a Liouville pair. We briefly review the construction as detailed in \cite{Hoz, Mas}. 

Choose a metric $g$ on $Y$ so that the splitting \eqref{eq:Ano} is orthogonal, and by time-averaging we may assume $g$ is adapted. Let $e_s, e_u$ be the oriented unit vector fields tangent to $E^{ss}$ and $E^{uu}$. Because $g$ is adapted, and the splitting \eqref{eq:Ano} is invariant, there are functions $r_s<0< r_u$ so that
\[\mathcal{L}_X e_s = -r_s e_s \qand \mathcal{L}_X e_u = -r_u e_u.\]
Take $\alpha_s$ and $\alpha_u$ to be the $1$-forms $g$-dual to $e_s$ and $e_u$ respectively. Then
\begin{equation}\label{eq:pair}
\mathcal{L}_X\alpha_s = r_s\alpha_s \qand \mathcal{L}_X \alpha_u = r_u\alpha_u.	
\end{equation}
Now set
\[\alpha_+ = \alpha_u-\alpha_s \qand \alpha_-=\alpha_u + \alpha_s.\]
From \eqref{eq:pair}, we see that these are both contact forms, $\alpha_+$ positively oriented and $\alpha_-$ negatively oriented. These 1-forms are only $C^1$, but can be perturbed to be smooth without altering their essential properties \cite{Hoz, Mas}; we use the same notation for these smooth approximations. See \Cref{img:Anosov} for a depiction of the ``bi-contact structure" induced by $\alpha_\pm$. One checks that 
\[\lambda = e^t\alpha_+ + e^{-t}\alpha_-\]
defines a Liouville 1-form on $\R\times Y$, so that $(\alpha_+,\alpha_-)$ is a Liouville pair. Note that the induced contact structures $\xi_\pm = \ker \alpha_\pm$ have transverse intersection along the direction of the Anosov flow. Moreover, the Reeb vector fields of $\alpha_\pm$ are transverse to the foliations $\mathcal{F}^{s}$ and $\mathcal{F}^u$. In particular by \Cref{rmk:taut}, the Reeb flows of $\alpha_\pm$ have no contractible orbits and the contact structures $\xi_\pm$ must be hypertight. Additionally, both contact structures are tangent to the Anosov flow, hence as plane fields they have non-vanishing sections and their Chern classes $c_1(\xi_\pm)$ are zero.

\begin{figure}[h]
\centering
\begin{minipage}{.5\textwidth}
\centering
	\includegraphics[scale=0.78]{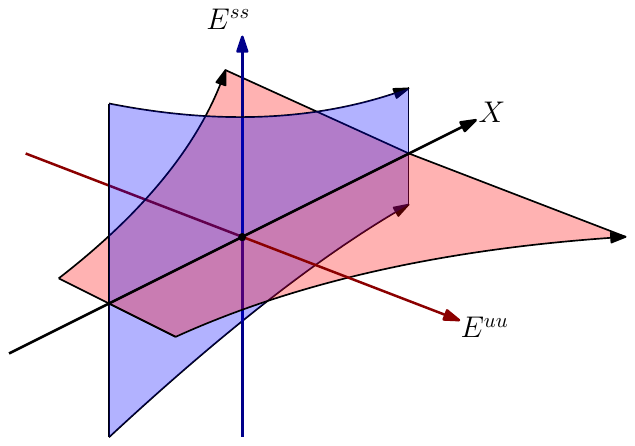}
\end{minipage}%
\begin{minipage}{.5\textwidth}
\centering
		\includegraphics[scale=0.78]{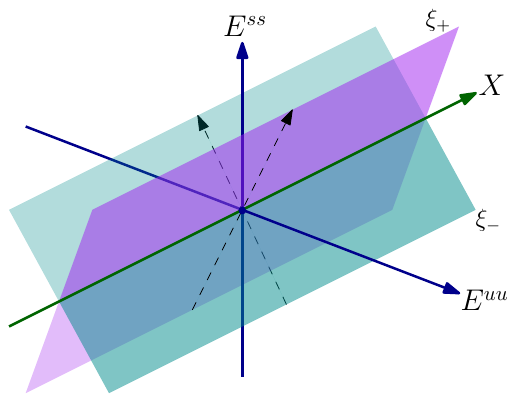}
\end{minipage}
	\caption{On the left, an illustration of the stable and unstable directions of the Anosov splitting and how vectors evolve under the Ansov flow $X$. On the right, an illustration of the bi-contact structure $\xi_\pm=\ker\alpha_\pm$ determined by the Anosov flow within a tangent space. Everything is oriented with respect to the right hand rule.}
	\label{img:Anosov}
\end{figure}

\begin{remark}
	The Anosov Liouville domains are of interest partly because they were the first examples produced of Liouville domains which are not Stein or Weinstein.\footnote{A Weinstein $2n$-manifold always has the homotopy type of an (at most) $n$-dimensional CW complex and so Weinstein domains for $n\geq 2$ have connected boundary.} The Floer homology of these domains has recently been investigated by Cieliebak--Lazarev--Massoni--Moreno \cite{CLMM} and it is shown to be an invariant of the Anosov flow up to orbit equivalence by Bowden--Massoni \cite{BM}. In particular, one can view these domains as a means to associate algebraic and Floer-type invariants to Anosov flows. The Fukaya category of the Anosov Liouville domains has some properties unlike Weinstein manifolds \cite{CLMM} and it remains an interesting question what extra symplectic flexibility properties these domains carry relative to the Weinstein setting.
\end{remark}

For our favourite Anosov flows from \Cref{ex:Anos}, we give an explicit description of their associated Anosov Liouville domains. These example are due to McDuff \cite{McD} and Geiges \cite{Gei} and predate the general construction of Mitsumatsu.

In McDuff's case \cite{McD}, one begins with the cotangent bundle $T^\ast \Sigma_g$ over a surface of genus $g\geq 2$. Consider a hyperbolic metric on the surface and let $\sigma$ be the associated volume form on $\Sigma_g$. One can define a ``magnetic deformation" of the symplectic form on the cotangent bundle to produce a new symplectic manifold
\[(T^\ast \Sigma_g, \w = \dd \lambda_{\can} + 2\pi p^\ast \sigma).\]
Here $\lambda_{\can}$ is the tautological Liouville $1$-form on $T^\ast \Sigma_g$ and $p: T^\ast\Sigma_g \to \Sigma_g$ is the projection map. Recall $\lambda_\can$ is a contact 1-form for the standard contact structure $\xi_{\can}$ on $S^\ast\Sigma_g$. By Chern--Weil theory and Gauss--Bonnet, $2\pi p^\ast \sigma$ is the curvature $\dd\lambda_\pre$ of a connection 1-form $\lambda_{\pre}$ on the principal $S^1$-bundle $-S^\ast\Sigma_g$. This is a contact 1-form for the prequantization contact structure $\xi_{\pre}$ on $-S^\ast\Sigma_g$ with Reeb vector field an infinitesimal generator of the $S^1$-action.

The tautological form $\lambda_{\can}$ scales proportionally to the radius while $\lambda_{\pre}$ does not, so that the contribution of $\dd\lambda_\can$ dominates at a  large radius while the magnetic field $\dd\lambda_\pre$ dominates at a small radius. One concludes the annulus bundle 
\[I\times S^\ast \Sigma_g \cong A^\ast \Sigma_g :=\{(x,v)\in T^\ast\Sigma_g : \varepsilon \leq \|v\| \leq \varepsilon^{-1}\} \]
is a Liouville domain for $\varepsilon>0$ sufficiently small with boundary 
\[(S^\ast\Sigma_g,\xi_\can)\sqcup (- S^\ast\Sigma_g, \xi_\pre).\]

For Geiges' example \cite{Gei}, we begin with the mapping torus $M_A$ of a positive hyperbolic matrix  $A\in \mathrm{SL}(2,\Z)$, considered as an Anosov diffeomorphism of $T^2$. We set $v_{\pm}$ to be  the eigenvectors of $A$ with eigenvalues $e^{\pm \mu}$ for $\mu>0$. On $\R^3_{x,y,z}$ there are a pair of contact forms
\[\alpha_{\pm} = e^{\mu z}\dd{v_+} \mp e^{-\mu z} \dd v_-.\]
There is then a Liouville 1-form on $\R_t\times \R^3$ given as  $\lambda= e^{t} \alpha_++ e^{-t}\alpha_-.$
We can quotient $\R^3$ by the relations
\[(x+n,y+m,z)\sim (x,y,z) \qq{for} n,m\in\Z \qand (A(x,y),z) \sim (x,y,z+1). \]
 Note that the quotient is the mapping torus $M_A$ and the $1$-forms $\alpha_{\pm}$ descend to the quotient, defining contact structures 
 \begin{equation}\label{eq:maptor}
\xi_\pm=\ker\alpha_\pm = \langle \pd_z, \pm e^{-\mu z}v_++e^{\mu z}v_-\rangle \quad \text{on $\pm M_A$.}
\end{equation}
 Hence $\lambda$ also descends to a Liouville form on $\R\times M_A$ and gives an exact filling of $(M_A,\xi_+)\sqcup (-M_A,\xi_-)$.

A unifying framework for these two and all other algebraic Anosov Liouville domains was observed by Geiges \cite{Gei} and Mitsumatsu \cite{Mit}. Let $G$ be either of the Lie groups $\widetilde{\mathrm{SL}}(2,\R)$ or $\mathrm{Sol}^3$. Here $\widetilde{\mathrm{SL}}(2,\R)$ is the universal cover of $\mathrm{SL}(2,\R)$ and $\mathrm{Sol}^3$ is an extension of $\R^2_{x,y}$ by $\R_z$ with $z$ acting as $(x,y) \mapsto (e^z x, e^{-z} y)$. These correspond to two of the  Thurston geometries. In either case, the structure of its Lie algebra implies $G$ has left-invariant vector fields $X_1,X_2,X_3$ satisfying
\[[X_1, X_3]=X_2\qand [X_2,X_3]=X_1.\]
Let $\alpha_i$ be left-invariant 1-forms dual to $X_i$ for $i=1,2,3$. Then $\alpha_1$ and  $\alpha_2$ are respectively positive and negative contact forms on $G$. They descend to the quotient $M=\Gamma \backslash G$ under any co-compact discrete subgroup $\Gamma \subset G$ acting by left-multiplication. One checks
\[\lambda = e^t \alpha_1 + e^{-t} \alpha_2\]
defines a Liouville form on $\R_t \times M$. 
For $G= \mathrm{Sol}^3$, the compact quotients $\Gamma \backslash G$ recover the mapping tori $M_A$ and mapping tori which they finitely cover. For $G= \widetilde{\mathrm{SL}}(2,\R)$, its compact quotients are the cotangent circle bundles $S^\ast\Sigma_g$ and more generally certain Seifert-fibred 3-manifolds which are circle bundles over a hyperbolic orbifold base. One family of examples are the Brieskorn spheres $\Sigma(2,3,6n+5)$ for $n\geq 1$; these were studied in our current context by Bowden \cite{Bow}. 

Note the following corollary of \Cref{thm:inf_spec_invts} applied to  these explicit examples of algebraic Anosov Liouville domains.
\begin{corollary}\label{cor:inf_spec}The following hold.
\begin{enumerate}[label=\textbf{\arabic*.}, leftmargin = 0.8cm, itemsep=0.2cm]
	\item For the unit cotangent circle bundle $S^\ast \Sigma_g$ over a closed surface of genus $g\geq 2$ with the canonical contact structure $\xi_{\can}$, the ECH spectrum is infinite. 
	\item For the circle bundle $-S^\ast \Sigma_g$ of Euler number $2-2g$ over a closed surface of genus $g\geq 2$ with the prequantization contact structure $\xi_{\pre}$, the ECH spectrum is infinite.
	\item For the mapping torus $\pm M_A$ of a positive hyperbolic matrix $A\in \SL(2,\Z)$ acting on $T^2$ with either of the contact structures $\xi_{\pm}$ in \eqref{eq:maptor}, the ECH spectrum is infinite. 
	\item For the Brieskorn sphere $-\Sigma(2,3,6n+5)$, $n\geq 1$, with respect to the contact structure $\eta_\mathrm{tan}$ tangential to the Seifert fibration considered in \cite{Bow}, the ECH spectrum is infinite.
\end{enumerate}
\end{corollary}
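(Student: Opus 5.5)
The plan is to deduce each of the four items directly from \Cref{thm:inf_spec_invts}, exhibiting each contact manifold as one boundary component of a connected (weak) Liouville domain with disconnected boundary. The first observation is that infinitude of the ECH spectrum depends only on the contact structure: $c_k(Y,\lambda)<\infty$ exactly when $c(Y,\xi)\in\im(U^k)$. So it suffices to produce, for each listed $(Y,\xi)$, one connected exact filling of $(Y,\xi)\sqcup(Y',\xi')$ with $Y'$ non-empty, in which $\xi$ is the contact structure induced on that boundary component.

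The items then go as follows.
\begin{enumerate}[label=\textbf{\arabic*.}, leftmargin = 0.8cm]
	\item and \textbf{2.} McDuff's annulus bundle $A^\ast\Sigma_g\cong I\times S^\ast\Sigma_g$, with the magnetically deformed form $\dd\lambda_\can+2\pi p^\ast\sigma$, is a connected Liouville domain with boundary $(S^\ast\Sigma_g,\xi_\can)\sqcup(-S^\ast\Sigma_g,\xi_\pre)$. \Cref{thm:inf_spec_invts} then gives infinite ECH spectrum for both components simultaneously.
	\item[\textbf{3.}] Geiges' form $\lambda=e^t\alpha_++e^{-t}\alpha_-$ descends to $[-N,N]\times M_A$. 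For $N$ large this is a connected Liouville domain with boundary $(M_A,\xi_+)\sqcup(-M_A,\xi_-)$, and we apply the theorem to each component.
	\item[\textbf{4.}] We use the $\widetilde{\mathrm{SL}}(2,\R)$ framework with $M=\Gamma\backslash G=-\Sigma(2,3,6n+5)$ (up to orientation conventions). The left-invariant Liouville pair $(\alpha_1,\alpha_2)$ gives a domain $I\times M$, and the step is to identify one of $\ker\alpha_1,\ker\alpha_2$ with Bowden's $\eta_{\mathrm{tan}}$.
\end{enumerate}

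Two checks remain, and they are routine. First, orientations: a Liouville pair yields boundary $(Y,\ker\alpha_+)\sqcup(-Y,\ker\alpha_-)$, so $\ker\alpha_-$ must be regarded as a positive contact structure on $-Y$, which is exactly how the items are phrased. Second, connectedness of $I\times Y$ is immediate since $Y$ is connected.

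The main obstacle is item 4. There one must confirm that Bowden's tangential contact structure $\eta_{\mathrm{tan}}$ on $-\Sigma(2,3,6n+5)$ really is, up to isotopy, a boundary component of an algebraic Anosov Liouville domain; equivalently, that it is the kernel of the left-invariant form $\alpha_2$ (or $\alpha_1$) on the appropriate quotient. I would first try to cite Bowden's description of $\eta_{\mathrm{tan}}$ as tangent to the Seifert fibres and transverse to the fibres' horizontal complement, and match it with the invariant contact structure containing the fibre direction $X_3$. Failing a direct match, I would invoke Bowden's result that this structure is one half of a Liouville pair (a co-fillable structure), which is all \Cref{thm:inf_spec_invts} needs.
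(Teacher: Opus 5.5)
Your proposal is correct and matches the paper's argument. The paper obtains the corollary by applying \Cref{thm:inf_spec_invts} to McDuff's annulus bundle, to Geiges' domain on $M_A$, and to the $\widetilde{\mathrm{SL}}(2,\R)$-quotient domains on the Brieskorn spheres; for item 4 it simply relies on Bowden's work, as your fallback does, to know that $\eta_{\mathrm{tan}}$ is a boundary component of such a domain.

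One correction to your sketch of the explicit identification. With $[X_1,X_3]=X_2$ and $[X_2,X_3]=X_1$, the field $X_3$ generates the hyperbolic (Anosov) flow and lies in both $\ker\alpha_1$ and $\ker\alpha_2$. So $X_3$ is not the Seifert fibre direction. The fibre direction is whichever of $X_1,X_2$ is elliptic, and $\eta_{\mathrm{tan}}$ should be matched with the unique kernel containing that vector.
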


\begin{remark}
	The embedded contact homology of prequantization bundles was computed by Farris \cite{Far} and Nelson--Weiler \cite{NW}. For the standard contact structure $\xi_{\can}$ on cotangent circle bundles, ECH can be related to string topology \cite{CL}. For the contact structures \eqref{eq:maptor} on the mapping torus $M_A$, ECH should be computable using toric techniques as in the work Choi \cite{Cho} and must coincide with the computation of Lebow \cite{Leb}. However, in none of these cases has the $U$ map been fully understood, and this is required to determine (in)finiteness of the ECH spectrum. One might be able to recover some of \Cref{cor:inf_spec} by careful study of the contact invariant and its relation with the $\F_2[U]$-module structure in Heegaard Floer or monopole Floer theories. Nevertheless, it is desirable to have an understanding of these objects purely in terms of contact dynamics and geometry. 
\end{remark}

One can obtain Liouville domains with more complicated topology by symplectic surgery procedures. In particular, one can attach Weinstein 1-handles between multiple of these Anosov Liouville domains (topologically this amounts to a boundary connected sum) to produce Liouville domains with disconnected boundary having arbitrarily many boundary components.

We should also mention a couple analogous constructions in higher dimensions. Geiges extended his earlier construction to six dimensions by finding suitable compact quotients of solvable Lie groups of dimension six \cite{Gei6}. Examples of a similar form were later found in all higher dimensions by Massot--Niederkr\"{u}ger--Wendl \cite{MNW} using some algebraic number theory.

\section{Proofs of the main theorems}
\label{sec:proof}

Let us now explain the proofs of our main results. 

\subsection{Proof of \Cref{thm:inf_spec_invts}}
We begin with a proof of \Cref{thm:inf_spec_invts}. In fact, using Hutchings \cite{HutTQFT} extension of ECH cobordism maps to strong cobordisms, we prove a more general statement. This extension will be used in the proofs of \Cref{thm:AnoReeb,thm:AnoHam,cor:Umap}.

\begin{theorem}
\label{thm:strong_inf}
Suppose $(X,\w)$ is a 4-dimensional connected strong symplectic cobordism with disconnected convex boundary 
\[(Y_+,\xi_+) = (Y_1,\xi_1) \sqcup \cdots \sqcup (Y_n,\xi_n) \qq{where} n>1.\] Further suppose the concave boundary $(Y_-,\xi_-)$ of $(X,\w)$ has non-vanishing contact invariant $c(Y_-,\xi_-)\neq 0$.	Then for each $1\leq i\leq n$, the ECH spectrum of $Y_i$ is infinite for any contact form $\lambda_i$ for $\xi_i$. Hence the ECH capacities of any weakly exact filling of $(Y_i,\xi_i)$ are also infinite. 
\end{theorem}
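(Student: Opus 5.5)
The plan is to argue by contradiction using the cobordism map in the trivial class $0\in H_2(X,\pd X)$, together with the fact that on a connected cobordism the $U$ maps of all convex boundary components intertwine with one and the same $U$ map on the concave end.

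\textbf{Reductions.} Finiteness of $c_k(Y_i,\lambda_i)$ holds exactly when $c(Y_i,\xi_i)\in\im(U^k)$, so it depends only on $\xi_i$. Since $c_1\le c_2\le\cdots$, it suffices to show $c_1(Y_1,\lambda_1)=\infty$, and we are free to use the contact form $\lambda_1$ coming from the strong cobordism. After a small perturbation (scaling the collar slightly) we may assume all forms on $Y_\pm$ are non-degenerate, since finiteness is unaffected.

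\textbf{Empty concave end.} If $Y_-=\emptyset$, I would remove a small Darboux ball from the interior of $X$. This produces a connected strong cobordism from $Y_+$ to a standard $(S^3,\xi_{\std})$, whose contact invariant is nonzero because it is strongly fillable. So we may assume $Y_-$ is nonempty and has a $U$ map $U_-$ associated to some component.

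\textbf{Main argument.} Suppose $c_1(Y_1,\lambda_1)<\infty$. Then for some $L$ there is $x\in ECH^L(Y_1,\lambda_1,0)$ with $U_1x=[\emptyset_1]$. Via the K\"unneth identification \eqref{eq:ECHtensor}, filtered at a slightly larger level $L'$ to account for the actions of the other factors, consider
\[y=x\otimes[\emptyset_2]\otimes\cdots\otimes[\emptyset_n]\in ECH^{L'}(Y_+,0).\]
Then $U_{Y_1}y=[\emptyset_+]$, while
\[U_{Y_2}y=x\otimes U[\emptyset_2]\otimes\cdots=0,\]
because the contact invariant always lies in $\ker U$. Property 3 of the filtered strong cobordism map $\Phi^{L'}=\Phi^{L'}(X,\w,0)$, applied to both components, gives
\[\Phi^{L'}(U_{Y_1}y)=U_-\Phi^{L'}(y)=\Phi^{L'}(U_{Y_2}y)=0.\]
On the other hand, property 1 gives $\Phi^{L'}[\emptyset_+]=[\emptyset_-]$. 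Hence $[\emptyset_-]=0$ in $ECH^{L'}(Y_-,\lambda_-,0)$. Applying $\iota_{L',\infty}$ then shows $c(Y_-,\xi_-)=0$, which is a contradiction.

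\textbf{Capacities.} The statement about weakly exact fillings follows immediately, since their ECH capacities are by definition the ECH spectrum of the boundary.

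\textbf{Main obstacle.} The delicate step is the identity $\Phi\circ U_{Y_1}=\Phi\circ U_{Y_2}$, i.e.\ that the $U$ maps of distinct convex components become equal after composing with the cobordism map. This is exactly where connectedness of $X$ enters. I would justify it through the stated property 3, which allows $U_1,U_2$ to be the $U$ maps of any component. Underlying this is the standard Seiberg--Witten argument: the $U$ map is cap product with a point class, and any two point classes in the connected $X$ are homologous. The filtered bookkeeping in the K\"unneth formula, using the additive action, is routine.
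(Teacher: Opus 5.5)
Your proof is correct, but it reaches the contradiction by a different mechanism than the paper's.

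The paper's route is as follows. It glues a long collar onto $Y_2$ and embeds an irrational ellipsoid $E(2L,2L+\varepsilon)$ in the enlarged cobordism. It then compares $U_{Y_1}$ with the $U$ map $U_E$ of the new concave end $\pd E$. Property 3 forces $c(Y_-)\otimes c(\pd E)$ into $\im U_E^L$. This is impossible, because $ECH^L(\pd E)\cong\F_2$ with trivial $U$ once the shortest orbit has action $2L>L$. So that argument genuinely needs the filtered maps $\Phi^L$ and the explicit ECH of the ellipsoid.

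Your route compares $U_{Y_1}$ with $U_{Y_2}$ instead. Both are intertwined by property 3 with the same $U_-$, and you use $U[\emptyset_2]=0$. This is the ECH analogue of the fact that a connected cobordism map factors through $\otimes_{\F_2[U]}$, and hence kills $Ux\otimes c(Y_2)$. Consequently you need only the unfiltered map $\Phi(X,\w,0)$. Your passage to a level $L'>L$ is unnecessary, since empty orbit sets have zero action. You also need no enlargement of $X$ and no ellipsoid computation. Your Darboux-ball removal when $Y_-=\emptyset$ is essentially the paper's ellipsoid device without any size requirement; it is needed only so that property 3 has a target $U$ map.

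What the paper's version buys is a concrete geometric picture, namely a point-constrained curve violating the action filtration, and it treats empty and nonempty $Y_-$ uniformly. Both arguments rest on exactly the same input: property 3 for arbitrary boundary components, which is where connectedness of $X$ enters.
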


\begin{proof}

	Suppose $(X,\w)$ is a four-dimensional connected strong symplectic cobordism from 
	\[(Y_+, \lambda_+) = (Y_1,\lambda_1)\sqcup\cdots \sqcup(Y_n,\lambda_n) \qq{to} (Y_-,\lambda_-)\quad \text{with $n>1$.}\]
	Let $\xi_+=\ker\lambda_+$, $\xi_-=\ker\lambda_-$, and $\xi_i = \ker \lambda_i$ for $1\leq i\leq n$. We can assume that the cobordism has non-degenerate boundary after a perturbation of the ends.
	
	 Suppose for contradiction that the first element of the ECH spectrum of $(Y_1,\lambda_1)$ is finite. In particular, say $c_1(Y_1,\lambda_1)=L<\infty$. 	For $R>0$, we may enlarge our cobordism $X$ by gluing on a finite cylindrical end to form a strong cobordism
	\[ (X_R,\w_R) := (X,\w)  \bigcup_{\pd X =\{0\}\times Y_2} \Big([0,R]_t \times Y_2, \dd(e^t\lambda_2)\Big)\]
	which will have 
	convex boundary  
	\[(Y^R_+, \lambda_+) = (Y_1,\lambda_1)\sqcup (Y_2,e^R\lambda_1)\sqcup\cdots \sqcup(Y_n,\lambda_n).\]
	Take $R$ large enough so that one can find an irrational ellipsoid $E= E(2L, 2L+\varepsilon)$ symplectically embedded in $X_R$ (for example take a small ball contained in the cylindrical end of $Y_2$, perturb it, and enlarge under the Liouville flow). Then $E$ has non-degenerate contact boundary $(\pd E, \lambda_\std)$ with its shortest Reeb orbit of period $2L$.
	
	By definition of the ECH spectral invariants, one can find an element $\gamma \in ECH^L(Y_1,\lambda_1)$ so that $U_1^L\gamma = c(Y_1,\xi_1)$, where $U_1$ is the $U$ map on $Y_1$.  Then 
	 \begin{equation}
	 \label{eq:Ucomp}
	 \begin{split}
	 	U_1^L\Big(\gamma\otimes c(Y_2,\xi_2)\otimes\cdots\otimes  c(Y_n,\xi_n)\Big)&= (U_1^L\gamma )\otimes c(Y_2,\xi_2)\otimes\cdots \otimes  c(Y_n,\xi_n)\\
	 	 & = c(Y_+,\xi_+),
	 	 \end{split}
	 \end{equation}
	where we also write $U_1$ for the $U$ map on $Y_+^R$ with respect to a basepoint in $Y_1$.

	  The complement of $E$ inside $X_R$ is a strong symplectic cobordism which induces a cobordism map
	\[\Phi^L(X_R\setminus E,\w, 0): ECH^L(Y^R_+, \lambda_+,0) \to ECH^L(Y_- \sqcup \pd E, \lambda_-\sqcup \lambda_\std,0).\]

	Let $U_E$ denote the $U$ map on $\pd E$. By compatibility of the $U$ maps with cobordisms, one has
	\begin{align*}
		U_E^L \circ \Phi^L(X\setminus E, \w, 0)&(\gamma\otimes c(Y_2,\xi_2)\otimes\cdots\otimes  c(Y_n,\xi_n))\\
		 &= \Phi^L(X\setminus E, \w, 0)\circ U_1^L(\gamma\otimes c(Y_2,\xi_2)\otimes\cdots\otimes  c(Y_n,\xi_n)).
		 \intertext{Applying \eqref{eq:Ucomp},}
		&= \Phi^L(X\setminus E,\w,0) \,c(Y_+,\xi_+)\\
		&= c(Y_-,\xi_-)\otimes c(\pd E, \xi_\std).
	\end{align*}
	By assumption that $(Y_-,\xi_-)$ has non-vanishing contact invariant, this is a non-zero element in $ECH(Y_-\sqcup \pd E,\lambda_-\sqcup \lambda_\std)$. Further, the above equality shows that
	\[c(Y_-,\xi_-)\otimes c(\pd E,\xi_\std) \in \im\bigg(U_E^L: ECH^L(Y_-\sqcup \pd E,\lambda_-\sqcup \lambda_\std)\to ECH^L(Y_-\sqcup \pd E,\lambda_-\sqcup \lambda_\std)\bigg).\]
	Equivalently, 
	\[c(\pd E,\xi_\std) \in \im \bigg(U^L_E: ECH^L(\pd E,\lambda_\std) \to ECH^L(\pd E,\lambda_\std)\bigg).\]
	But the shortest Reeb orbit of $\pd E$ has action $2L$. So $ECH^L(\pd E,\lambda_\std)$ is isomorphic to $\F_2$, with its only non-trivial element being $c(\pd E,\xi_\std)$, and a trivial $U$-action. We thus obtain a contradiction. 
	
	Nothing was special about our choice of the boundary component $(Y_1,\lambda_1)$, and so we conclude $c_1(Y_i,\lambda_i)=\infty$ for $1\leq i\leq n$. Finiteness of the ECH spectrum depends only on the underlying contact structure, so the ECH spectrum of $(Y_i,\xi_i)$ will then be infinite for any choice of contact form. And given a weakly exact filling $(X,\w)$ of $(Y_i,\lambda)$, where $\ker\lambda=\xi_i$, we have
$c_1^\ECH(X,\w) = c_1(Y_i,\lambda)=\infty$. 
	\end{proof}

\begin{figure}
	\includegraphics[scale=0.5]{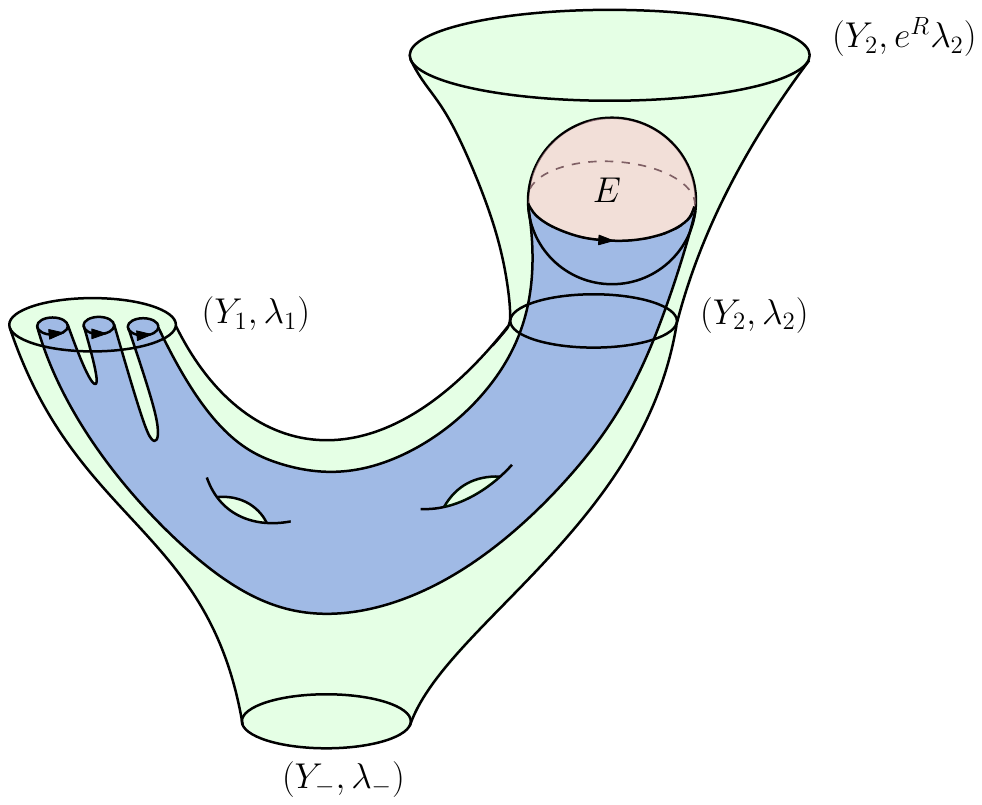}
	\caption{An illustration of the proof of \Cref{thm:strong_inf}. If $c_1(Y_1,\lambda_1)<\infty$, compatibility of the $U$ maps and cobordism map implies the existence of a holomorphic curve which does not respect the action filtration.}
	\label{img:cob}
\end{figure}

An illustration of the proof idea is given in \Cref{img:cob}. The figure depicts a holomorphic curve witnessed by the cobordism map $\Phi(X\setminus E, \w, 0)$ sending an ECH generator $\gamma$ of action $L$ to a Reeb orbit on the ellipsoid of action $2L$, which is incompatible with the action filtration.

Observe that \Cref{thm:inf_spec_invts} follows as a special case of \Cref{thm:strong_inf} where $Y_-$ is the empty manifold and the cobordism is weakly exact.  \Cref{thm:inf_spec_invts} also claims that if $(X,\w)$ is a weak Liouville domain with disconnected boundary $(Y_1,\lambda_1)\sqcup\cdots \sqcup (Y_n,\lambda_n)$, then its ECH capacities are infinite. To see this, note that the property \eqref{eq:specdisj} implies
\[c_k^\ECH(X,\w) = c_k\Big((Y_1,\lambda_1)\sqcup\cdots \sqcup (Y_n,\lambda_n)\Big) \geq  c_k(Y_1,\lambda_1).\]
We know $c_k(Y_1,\lambda_1)$ is infinite by \Cref{thm:strong_inf}, so the claim follows.

We note that while the proof of \Cref{thm:strong_inf} uses Hutchings' unpublished work \cite{HutTQFT}, \Cref{thm:inf_spec_invts} requires only that ECH cobordism maps are well defined for weakly exact cobordisms, which has appeared in the literature \cite{HTCob}.

\subsection{Proofs of \Cref{thm:AnoReeb,thm:AnoHam}}
We now address \Cref{thm:AnoReeb} using as input \Cref{thm:strong_inf}. The proof is adapted from some arguments of Barthelm\'{e}--Mann--Bowden \cite[Prop.\!\! A.1]{BarM}. We are grateful to Rohil Prasad for pointing this out in the literature.

\begin{proof}[Proof of \Cref{thm:AnoReeb}]
Suppose $(Y,\xi)$ is an Anosov contact structure with a contact form $\lambda$ and associated Reeb vector field $R$ whose Reeb flow is oriented Anosov. As in \Cref{ssec:AL}, consider an adapted metric $g$ for which the Anosov splitting
\[TY = \langle R\rangle \oplus  E^{ss}\oplus E^{uu}\]
is orthogonal. Let $e_s$ and  $e_u$ be oriented unit vector fields tangent to $E^{ss}$ and $E^{uu}$ and let $\alpha_s$ and $\alpha_u$ be their $g$-dual 1-forms. In particular, $R,e_s,e_u$ is an orthogonal frame for $Y$. Then as in \eqref{eq:pair},
\[\iota_R \dd\alpha_s =r_s \alpha_s \qand \iota_R \dd\alpha_u = r_u\alpha_u \quad \text{for $r_s<0<r_u$.}\]
 The $1$-form $\alpha_- = \alpha_u+\alpha_s$ satisfies
\[(\alpha_-\wedge\dd\alpha_-)(R,e_s,e_u)= r_s-r_u<0.\]
Hence 
\[\xi_- := \ker \alpha_ - = \langle R \rangle \oplus  \langle e_u-e_s\rangle \]
defines a negative contact structure on $Y$, or equivalently a positive contact structure on $-Y$. For $\varepsilon>0$, we can perturb this hyperplane distribution to
\[ \tilde{\xi}_-=\langle R + \varepsilon(e_s+e_u), e_u-e_s\rangle. \]
Because being contact is an open condition,  $\tilde{\xi}_-$  will still define a contact structure for $\varepsilon$ sufficiently small, and will now be transverse to the vector field $R$. Let $\eta_-$ be an arbitrarily $C^1$-close smooth approximation to this distribution. This will be a smooth contact structure transverse to $R$.  

Consider the truncated symplectization 
\[(X,\w) = \left([0,1]_t \times Y, \dd(e^t\lambda)\right).\]
Since $\lambda$ is a contact form for $\xi$, we have $\dd\lambda|_{\xi}>0$. Further, 
\[\dd\lambda(e_s,e_u) = \lambda \wedge \dd\lambda(R,e_s,e_u) >0.\]
Hence
\[\dd\lambda(R+\varepsilon(e_u+ e_s), e_u-e_s) = 2\varepsilon\cdot \dd\lambda(e_s, e_u)>0.\]
This means that $\dd\lambda|_{\tilde{\xi}_-}>0$ and consequently $\dd\lambda|_{\eta_-}>0$ for a close enough approximation. We deduce that $(X,\w)$ is a weak symplectic filling of $(Y,\xi)\sqcup (-Y, \eta_-)$. 

Because $\w$ is exact, the arguments of \cite[Prop.\!\ 4.1]{Eli} imply $\w$ can be modified near the boundary to a symplectic form $\tilde{\w}$ so that $(X, \tilde{\w})$ will be a strong filling of $(Y,\xi)\sqcup (-Y, \eta_-)$. Now \Cref{thm:strong_inf} implies that the ECH spectrum of $(Y,\xi)$ is infinite for any  contact form for $\xi$ and the ECH capacities of any exact filling of $(Y,\xi)$ are infinite.

On the other hand, we can also find a strong symplectic cap \cite{EH} for $(-Y, \eta_-)$ to obtain a strong filling of $(Y,\xi)$. And hence the contact invariant $c(Y,\xi)$ cannot vanish.
\end{proof}

We now prove \Cref{thm:AnoHam} using a generalization of the preceding argument. It is first helpful to introduce some standard terminology.

\begin{definition}
	Let $Y$ be a closed oriented 3-manifold. A \emph{Hamiltonian structure} on $Y$ is a non-vanishing closed 2-form $\w$. A \emph{framed Hamiltonian structure} on $Y$ is a pair $(\lambda, \w)$ of a 1-form $\lambda$ and a closed 2-form $\w$ so that $\lambda \wedge \omega >0$. To this pair, we associate a \emph{Hamiltonian vector field} $X$ uniquely specified by the conditions
	\[\iota_X\w =0 \qand \iota_X \lambda =1.\]
\end{definition} 
\begin{example}\label{ex:Hams} We recall the standard examples of framed Hamiltonian structures in three dimensions.
\begin{enumerate}[label=\textbf{\arabic*.}, leftmargin = 0.8cm]
	\item A contact form $\lambda$ determines a framed Hamiltonian structure $(\lambda,\dd\lambda)$. The associated Hamiltonian vector field is the usual Reeb vector field of $\lambda$.
	\item Let $(\Sigma, \sigma)$ be a closed surface with area-form and let $\phi: \Sigma \to \Sigma$ be an area-preserving diffeomorphism. We may form the associated mapping-torus of $\phi$ given as $Y= I_t \times \Sigma/(1,x)\sim (0,\phi(x))$. Then $(\dd t, \sigma)$ is a framed Hamiltonian structure on $Y$. The Hamiltonian vector field is $\pd_t$, generating the suspension flow.
	
	\item Given a symplectic 4-manifold $(X,\Omega)$ and a closed oriented hypersurface $i: Y \hookrightarrow X$ given as a regular level set $Y=H^{-1}(0)$ of a Hamiltonian $H: X\to \R$,  the 2-form $\w = i^\ast \Omega$ is a Hamiltonian structure on $Y$. Recall we can associate to $H$ a vector field $X_H$ on $X$ determined by $\dd H = \iota_{X_H}\Omega$, which will be tangent to $Y$. We can always find a 1-form $\lambda$ on $Y$ so that $\lambda(X_H|_Y)=1$.  Then $(\lambda,\w)$ will be framed Hamiltonian with Hamiltonian vector field $X_H|_Y$. \label{ex:Hamfield}
	
	\item Let $Y$ be a closed 3-manifold with a volume form $V$. Let $X$ be any non-vanishing volume-preserving vector field on $Y$. Then $\w=\iota_X V$ defines a Hamiltonian structure. One can find a 1-form $\lambda$ so that $\lambda (X)=1$, and consequently $(\lambda,\w)$ is framed Hamiltonian with Hamiltonian vector field $X$. 
\end{enumerate}
\end{example}

As in \Cref{ex:Hamfield} of  \Cref{ex:Hams}, consider a Hamiltonian hypersurface $Y=H^{-1}(0)$ in $(X,\Omega)$ equipped with the framed Hamiltonian structure $(\lambda,\w)$. A Moser-type argument for coisotropic submanifolds implies a tubular neighbourhood $U$ of $Y$ is symplectomorphic to $(-\varepsilon,\varepsilon)_t\times Y$ with symplectic form
\[\Omega_U =\w+\dd (t\lambda).\]
Moreover this symplectomorphism $\psi: (U,\Omega_U)\to ((-\varepsilon,\varepsilon)_t\times Y,\Omega)$ restricts to the identity map $\psi|_Y: Y\to \{0\}\times Y$.

\begin{proof}[Proof of \Cref{thm:AnoHam}]
	
	Suppose $M=H^{-1}(c)$ is a compact regular level set of a Hamiltonian $H:X\to \R$  in a weak Liouville domain $(X,\Omega)$ with $c_1^{\ECH}(X,\Omega)<\infty$ so that the characteristic flow on $M$ is oriented Anosov. That is, the Hamiltonian vector field $v = X_H|_M$ generates an oriented Anosov flow on $M$. We can also describe $v$ as the Hamiltonian vector field of a framed Hamiltonian structure $(\lambda,\w)$ on $M$. There is then a tubular neighbourhood $M\subset U$ symplectomorphic to $(-\varepsilon,\varepsilon)_t \times M$ with symplectic form $\Omega_U= \w + \dd(t\lambda)$. Up to multiplying $H$ by a sign (and changing the orientation and co-orientation of $M$ correspondingly), we  may assume the vector field $\pd_t$ along $M$ points out towards the boundary of $X$ (note the boundary is connected by \Cref{thm:inf_spec_invts}).
	
	 As in the previous proof of \Cref{thm:AnoReeb}, we consider an adapted metric $g$ for which the Anosov splitting
\[TM = \langle v\rangle \oplus  E^{ss}\oplus E^{uu}\]
is orthogonal. Let $e_s$ and  $e_u$ be oriented unit vector fields tangent to $E^{ss}$ and $E^{uu}$ and let $\alpha_s$ and $\alpha_u$ be their $g$-dual 1-forms. Then the $1$-form $\alpha_- = \alpha_u+\alpha_s$ satisfies $\alpha_-\wedge\dd\alpha_-<0$. Hence 
\[\xi_- := \ker \alpha_ - = \langle v \rangle \oplus  \langle e_u-e_s\rangle \]
defines a negative contact structure on $M$, or equivalently a positive contact structure on $-M$. For $\delta>0$ and sufficiently small, we can perturb this hyperplane distribution to another contact structure
\[ \tilde{\xi}_-=\langle v + \delta(e_s+e_u), e_u-e_s\rangle \]
transverse to $v$ and let $\eta_-$ be an arbitrarily $C^1$-close smooth approximation to $\tilde{\xi}_-$.  

For $\varepsilon>0$ and sufficiently small, we have a symplectic manifold 
\[(W,\Omega_W) = \Big( [-\varepsilon,0]_t \times M,  \w + \dd(t \lambda)\Big).\]
We see that
\[(\w - \varepsilon\dd\lambda)(R+\delta(e_u+ e_s), e_u-e_s) = 2\delta\cdot \w(e_s, e_u)  -\varepsilon\cdot \dd\lambda(R, e_u-e_s)-2\varepsilon\delta \cdot \dd\lambda(e_s,e_u).\]
Note $\lambda\wedge\w(X, e_s, e_u)>0$ so that $\w(e_s,e_u)>0$. We may then take $\varepsilon$ sufficiently small relative to $\delta$ so that $w-\varepsilon \dd\lambda$ restricted to $\tilde{\xi}_-$ is positive, and thus positive on $\eta_-$ as well. This implies $\{-\varepsilon\}\times -M$ is a weakly convex boundary component of $(W,\Omega_W)$ with respect to the contact structure $\eta_-$.

Let $(D,\Omega|_D) \subset (X,\Omega)$ be the subdomain bounded by $M$. Then $X\setminus D$ can be glued to $W$ along $\{0\}\times M$, since both manifolds are modelled on $(U,\Omega_U)$ at their boundary. Let $(Y,\xi)$ be the contact-type boundary of $(X,\Omega)$.

After gluing, we obtain a symplectic structure $\tilde{\Omega}$ on $X\setminus D$ with strongly convex boundary $(Y,\xi)$ and weakly convex boundary $(\{-\varepsilon\}\times -M, \eta_-)$. 

Because $X$ is a weak Liouville domain, $\Omega$ is exact and hence $\w$, which is its pullback to $M$, is also exact. Thus $(X\setminus D, \tilde{\Omega})$ is exact near the boundary component $\{-\varepsilon\}\times -M$ and  \cite[Prop.\! 4.1]{Eli} implies $\tilde{\Omega}$ can then be modified so that both boundary components of $X\setminus D$ are strongly convex. We have thus produced a strong co-filling of $(Y,\xi)$. On the other hand, by assumption on the first ECH capacity of $(X,\Omega)$, we have $c_1(Y,\xi)<\infty$, which contradicts \Cref{thm:strong_inf}.\end{proof}

\begin{remark} 
The same proof implies the following more general result. Suppose $(X,\w)$ is a strong symplectic cobordism from $(Y_+,\xi_+)$ to $(Y_-,\xi_-)$, with $c_1(Y_+,\xi_+)<\infty$, and the contact invariant $c(Y_-,\xi_-)\neq 0$. Given a separating \emph{exact} Hamiltonian hypersurface $Y\subset X$, its characteristic flow cannot be oriented Anosov. Here exact means that the associated Hamiltonian structure $\omega|_Y$ is an exact 2-form.
\end{remark}

\begin{corollary}\label{cor:elliptic}
	Suppose that $(X,\w)$ is a four-dimensional weak Liouville domain so that $c_1^{\ECH}(X,\w)<\infty$. Let $H: X\to \R$ be a smooth Hamiltonian with a compact regular level set $M=H^{-1}(c)$ so that $M$ is either:
	\begin{itemize}
		\item an integer homology sphere,
		\item or homeomorphic to a circle bundle over an oriented surface.
	\end{itemize} 
	Then in some $C^2$-open neighbourhood of $H$, a $C^2$-dense open set of Hamiltonians $\widetilde{H}$ have an elliptic orbit on $\widetilde{H}^{-1}(c)$.
\end{corollary}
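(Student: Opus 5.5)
The plan is to combine Newhouse's dichotomy, recalled in the introduction, with \Cref{thm:AnoHam}, and to use the topological hypotheses on $M$ to rule out the non-oriented Anosov case that \Cref{thm:AnoHam} does not cover. By Newhouse \cite{New}, there is a $C^2$-open neighbourhood $\mathcal{V}$ of $H$ and a $C^2$-dense open subset $\mathcal{D}\subset\mathcal{V}$ such that for each $\widetilde{H}\in\mathcal{D}$ the characteristic flow on $\widetilde{H}^{-1}(c)$ either has an elliptic periodic orbit or is Anosov. Shrinking $\mathcal{V}$, the level set $\widetilde{H}^{-1}(c)$ remains a compact regular level set isotopic to $M$, so it is diffeomorphic to $M$ and still satisfies the hypotheses. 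It therefore suffices to show that no such level set can carry an Anosov characteristic flow at all; then every $\widetilde H\in\mathcal{D}$ has an elliptic orbit.

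By \Cref{thm:AnoHam}, since $c_1^{\ECH}(X,\w)<\infty$, the characteristic flow cannot be \emph{oriented} Anosov. So the remaining step is to show that any Anosov flow on $M$ is automatically orientable, or that $M$ admits no Anosov flow at all. In the integer homology sphere case, $H^1(M;\Z/2)=0$, so the line bundles $E^{ss}$ and $E^{uu}$ are trivial, and so is the flow direction. The stable and unstable bundles are therefore orientable, and the flow, if Anosov, would be oriented Anosov, which is a contradiction.

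In the circle bundle case, I would invoke the classical result of Ghys that any Anosov flow on a circle bundle over a surface is orbit equivalent to a finite cover of geodesic flow on some hyperbolic orbifold's unit tangent bundle. More concretely, I would argue that the weak stable bundle of an Anosov flow on such an $M$ is orientable, since the base is oriented and the flow is transverse to the fibres up to isotopy. Alternatively, if $E^{ss}$ were non-orientable, I would pass to the orientation double cover $\widetilde M$. This is again a circle bundle over an oriented surface and embeds as a level set only via a covering, so this route is weaker. The orientability claim for circle bundles is the main obstacle. I would try first to cite Ghys's classification, which identifies the flow with a lift of geodesic flow on $S^\ast\Sigma$ for oriented $\Sigma$ (the total space being a circle bundle over an oriented surface forces the orbifold base to be orientable), whose invariant line fields are orientable. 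Orientability is preserved under orbit equivalence up to the topological-versus-smooth subtlety, which is handled by structural stability and the fact that orientability of $E^{ss}$ is detected by the holonomy of $\mathcal{F}^s$ along closed orbits.
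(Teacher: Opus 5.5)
Your proposal is correct and follows the paper's proof: Newhouse's dichotomy reduces the claim to excluding Anosov characteristic flows, \Cref{thm:AnoHam} excludes oriented ones, and orientability of any Anosov flow on $M$ comes from $H^1(M;\Z/2)=0$ in the homology sphere case and from Ghys's theorem (the flow is orbit equivalent to a fibrewise pullback of geodesic flow on $S^\ast\Sigma_g$) in the circle bundle case. The side routes you float and then discard (transversality to the fibres, the orientation double cover) are not needed. Invariance of orientability under orbit equivalence is cleanest to see this way: an orbit equivalence carries the weak stable and unstable foliations to weak stable or unstable foliations, and orientability of $E^{ss}$ and $E^{uu}$ is equivalent to orientability of these foliations.
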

\begin{proof}
As Newhouse \cite{New} shows, in some open neighbourhood $\mathcal{U} \subset C^\infty(M)$ of $H$ with the $C^2$-topology, there is an open dense set $\mathcal{V}\subset \mathcal{U}$ so that for $\widetilde{H}\in \mathcal{V}$, the flow on $\widetilde{H}^{-1}(c)$ either is Anosov, or has an elliptic orbit. To obtain our result, it will thus suffice to show the flow on $\widetilde{H}^{-1}(c)$ cannot be  Anosov.

By Ehresmann's theorem, we can suppose that for $\widetilde{H}\in \mathcal{U}$, the hypersurface $\widetilde{H}^{-1}(c)$ is diffeomorphic to $M= H^{-1}(c)$. We claim that under our assumptions on $M$, all Anosov flows on $M$ are oriented. Then \Cref{thm:AnoHam} rules out $\widetilde{H}^{-1}(c) \subset X$ carrying an oriented Hamiltonian Anosov flow, from which the result follows.

If $M$ is homeomorphic to a circle bundle over an oriented surface $\Sigma_g$, then Ghys \cite{Ghy} shows any Anosov flow on $M$ is orbit equivalent to a fibrewise pullback of the geodesic flow on the cotangent circle bundle $S^\ast\Sigma_g$. Since the geodesic Anosov flow is oriented, its pullback to $M$ must be oriented as well. 

Given any Anosov flow on $M$, the obstruction to its orientability is the Stiefel--Whitney class $w_1(E^u)=w_1(E^s) \in H^1(M;\Z/2\Z)$ of the weak distributions of the flow. If $M$ is an integer homology sphere (or just a $\Z/2\Z$-homology sphere), the group $H^1(X;\Z/2\Z)$ is trivial. Hence any Anosov flow on $M$ will be automatically oriented.
\end{proof}

\begin{remark}
	We can readily find hypersurfaces which satisfy the assumptions of \Cref{cor:elliptic}. There are lots of integer homology spheres which embed as hypersurfaces in $\R^4$, including many Brieskorn spheres \cite{CH} (most of which admit an Anosov flow). One can attach two null-homologous one-handles to the zero-section of $D^\ast S^2$, to produce a genus 2-surface in $D^\ast S^2$. The boundary of a tubular neighbourhood of this surface will be homeomorphic to $S^\ast \Sigma_2$. 
\end{remark}

\subsection{Proof of \Cref{cor:Umap}}
	Consider any contact structure $(Y,\xi)$ on $Y$. Associated to the contact structure is its contact invariant $c(Y, \xi) \in ECH(Y,\xi,0)$. 	Assume $c(Y,\xi)=0$; then $(Y,\xi)$ is not strongly fillable \cite{HutTQFT, Ech}. If $(Y,\xi)$ were strongly co-fillable, one could find a strong cap \cite{EH} for the other ends to produce a strong filling of $(Y,\xi)$. Hence $(Y,\xi)$ is not strongly co-fillable.
	
	If instead $c(Y,\xi)\neq 0$, then one has $U[\emptyset]=0$ for action reasons and so $c(Y,\xi) \in \ker(U)$. Taubes \cite{Tau} shows there is an isomorphism of $\F_2[U]$-modules
	\[\widehat{HM}^{-\ast}(Y,\mathfrak{s}_\xi) \cong ECH_\ast(Y,\xi, 0).\]
	By the hypothesis that $\ker(U)\subset \im(U)$, we conclude there is some $\gamma \in ECH(Y,\xi,0)$ so that
	\[U(\gamma) = [\emptyset] = c(Y,\xi).\]
	But the first ECH spectral invariant $c_1(Y,\lambda)$ is then finite, in particular bounded by the minimal action needed to represent $\gamma$ by Reeb orbit sets for $(Y,\lambda)$.  \Cref{thm:strong_inf} then implies $(Y,\xi)$ is not strongly co-fillable.
	
	If $Y$ possessed an oriented Anosov flow, then the associated Anosov Liouville domain would give a strong co-filling $I\times Y$ of some contact structure $\xi$ on $Y$. Moreover, from its construction, this contact distribution has a non-vanishing section given by the vector field generating the Anosov flow. Thus the Chern class $c_1(\xi)=c_1(\mathfrak{s}_\xi)=0$. If we assume $\ker(U)\subset \im(U)$ for the spin$^c$-structure $\mathfrak{s}_\xi$, we obtain a contradiction with the existence of a strong co-filling.\qed

\section{Further constructions}
\label{sec:constructions}
We now prove some other results discussed in the introduction and consider some examples. We also present a few open questions surrounding ``elementary capacities."

\subsection{Hypertautness}\label{ssec:hyp}
We can generalize the results of \Cref{sec:proof} from the setting of Anosov flows to hypertaut foliations. We are grateful to Thomas Massoni and Audrey Rosevear for some comments inspiring these extensions.

\begin{definition}
	Let $\mathcal{F}$ be a co-oriented $C^0$-foliation on $Y$. We say that $\mathcal{F}$ is \emph{hypertaut} if there is a smooth one-form $\beta$ on $Y$ so that $\dd\beta|_{T\mathcal{F}}>0$. This is equivalent to the existence of an exact volume preserving flow transverse to $\mathcal{F}$.
\end{definition}
As explained by Massoni \cite{Mas3}, a hypertaut foliation gives rise to a pair of approximating contact structures via \cite{ET} with a Liouville co-filling.

\begin{proposition}[{\cite[Prop.\!\! 4.4]{Mas3}}]\label{prop:hyp}
	Suppose $\mathcal{F}$ is a hypertaut $C^0$-foliation on $Y$ defined by $T\mathcal{F}=\ker\alpha$ and so that $\dd\beta|_{T\mathcal{F}}>0$. Then $\mathcal{F}$ is $C^0$-approximated by a pair of positive/negative contact structures $\xi_\pm$ so that $\dd\beta|_{\xi_\pm}>0$. Moreover $\alpha$ can be smoothed to $\widetilde{\alpha}$ satisfying $\widetilde{\alpha}\wedge\dd\beta>0$ so that for $\varepsilon>0$ sufficiently small, $\lambda=\beta+t\widetilde{\alpha}$ is the primitive of a symplectic form on $[-\varepsilon,\varepsilon]_t\times Y$ with weakly convex boundary $(Y,\xi_+)\sqcup (-Y,\xi_-)$. After modifying $\lambda$ as in \cite{Eli}, we obtain an exact filling of $(Y,\xi_+)\sqcup (-Y,\xi_-)$.
\end{proposition}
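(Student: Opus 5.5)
The plan follows the Eliashberg--Thurston approximation scheme, with the closed form $\dd\beta$ playing the role that $\dd\lambda$ (or $\w$) played in the proofs of \Cref{thm:AnoReeb,thm:AnoHam}.

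\textbf{Step 1: contact approximations.} Since $\mathcal{F}$ is hypertaut, it is in particular taut: closing up transversals gives positive transverse loops through every point. I will apply the $C^0$-version of the Eliashberg--Thurston perturbation theorem (as extended to $C^0$-foliations by Bowden and Kazez--Roberts) to obtain positive and negative contact structures $\xi_\pm$ that are $C^0$-close to $T\mathcal{F}$. Positivity of a $2$-form on a plane field is an open condition in $C^0$. Since $\dd\beta|_{T\mathcal{F}}>0$ holds on a compact manifold, any plane field sufficiently $C^0$-close to $T\mathcal{F}$ also satisfies this, and so $\dd\beta|_{\xi_\pm}>0$.

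\textbf{Step 2: smoothing the defining form.} Choose a volume form and let $V$ be the non-vanishing vector field with $\iota_V(\mathrm{vol})=\dd\beta$; it is transverse to $\mathcal{F}$ because $\dd\beta|_{T\mathcal{F}}>0$. The continuous form $\alpha$ satisfies $\alpha(V)>0$, that is, $\alpha\wedge\dd\beta>0$. I will mollify $\alpha$ to a smooth $\widetilde\alpha$ that is $C^0$-close to $\alpha$; positivity of $\widetilde\alpha(V)$ is preserved, so $\widetilde\alpha\wedge\dd\beta>0$.

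\textbf{Step 3: the symplectic form on the collar.} Set $\lambda=\beta+t\widetilde\alpha$. Then
\[\dd\lambda=\dd\beta+\dd t\wedge\widetilde\alpha+t\,\dd\widetilde\alpha,\]
and therefore
\[(\dd\lambda)^2=2\,\dd t\wedge\widetilde\alpha\wedge\dd\beta+O(|t|).\]
The leading term is a volume form, so $\dd\lambda$ is symplectic for $|t|\le\varepsilon$ with $\varepsilon$ small. On the slices $\{\pm\varepsilon\}\times Y$, $\dd\lambda$ restricts to $\dd\beta\pm\varepsilon\,\dd\widetilde\alpha$. Since $\dd\beta|_{\xi_\pm}>0$ and positivity is open, shrinking $\varepsilon$ gives $(\dd\beta\pm\varepsilon\,\dd\widetilde\alpha)|_{\xi_\pm}>0$.

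I still need to check that the boundary orientations match: $\{\varepsilon\}\times Y$ should carry $\xi_+$, and $\{-\varepsilon\}\times(-Y)$ should carry $\xi_-$. Here $\xi_-$ is a positive contact structure on $-Y$, and the slice form is positive on it as an unoriented plane condition, so I expect no sign issue. The result is a weak filling of $(Y,\xi_+)\sqcup(-Y,\xi_-)$.

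\textbf{Step 4: upgrading to an exact filling.} Since $\dd\lambda$ is exact, \cite[Prop.\ 4.1]{Eli} applies as it did in the proof of \Cref{thm:AnoReeb}. It lets us modify $\lambda$ in a collar near each boundary component so that the boundary becomes strongly convex, and keeping the primitive global gives an exact filling.

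The main obstacle is Step 1 for genuinely $C^0$ foliations. Eliashberg--Thurston require $C^2$ regularity, and the $C^0$ versions need the transverse-contact-structure input. Here the smooth transverse volume-preserving flow $V$ supplies it: I would first build contact structures near a holonomy-rich region using $V$ and then spread them by the flow, following Massoni's argument.
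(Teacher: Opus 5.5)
Your proposal is correct and follows the route the paper takes in citing Massoni. You $C^0$-approximate the foliation by contact structures via the Kazez--Roberts/Bowden extension of Eliashberg--Thurston, which applies because the foliation is taut and, by Stokes applied to $\dd\beta$, has no closed leaves. You then use openness of $\dd\beta$-positivity, the identity $(\dd\lambda)^2=2\,\dd t\wedge\widetilde\alpha\wedge(\dd\beta+t\,\dd\widetilde\alpha)$ together with the boundary restrictions $\dd\beta\pm\varepsilon\,\dd\widetilde\alpha$, and \cite[Prop.\ 4.1]{Eli}.

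Two small remarks. First, positivity of a $2$-form on a plane field is not an unoriented condition, so in Step 3 you should co-orient $\xi_-$ on $-Y$ opposite to $\alpha$; this makes its induced orientation agree with that of $T\mathcal{F}$. Second, your closing sketch of a new $C^0$ approximation built from $V$ is unnecessary, since Kazez--Roberts already handle taut foliations with continuous tangent planes.
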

This filling has less structure than the Liouville pairs\footnote{If one assumes $\mathcal{F}$ is $C^2$, then one does in fact get a Liouville pair \cite[Cor.\! 4.6]{Mas3}.} and Anosov--Liouville domains that Mitsumatsu constructs for Anosov flows, but it is sufficient to apply \Cref{thm:strong_inf}. With cosmetic modifications to their proofs, \Cref{thm:AnoReeb,thm:AnoHam,cor:Umap} have the following respective generalizations.

\begin{proposition}\label{prop:hypspec}
	Suppose $(Y,\xi)$ is a contact 3-manifold admitting a Reeb flow transverse to a $C^0$-foliation. Or suppose $(Y,\xi)$ belongs to a contact pair approximating a hypertaut $C^0$-foliation as in \Cref{prop:hyp}. Then the ECH spectrum of $(Y,\xi)$ is infinite.
\end{proposition}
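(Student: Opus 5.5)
Both cases reduce to \Cref{thm:strong_inf}: I will exhibit $(Y,\xi)$ as one component of the disconnected convex boundary of a connected strong symplectic cobordism with empty concave boundary. The empty manifold has non-vanishing contact invariant $[\emptyset]\neq 0$, since $ECH(\emptyset)=\F_2$ is generated by the empty orbit set. \Cref{thm:strong_inf} then gives infinite ECH spectrum for every contact form on $\xi$.

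The second case is immediate. Suppose $(Y,\xi)$ is $\xi_+$ (or, with reversed orientation, $\xi_-$) in a contact pair approximating a hypertaut $C^0$-foliation. \Cref{prop:hyp} produces an exact, hence strong, filling of $(Y,\xi_+)\sqcup(-Y,\xi_-)$ on $[-\varepsilon,\varepsilon]\times Y$. This filling is connected and has two convex boundary components, so \Cref{thm:strong_inf} applies directly to each component.

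For the first case, I would copy the proof of \Cref{thm:AnoReeb}. Let $\lambda$ be a contact form for $\xi$ whose Reeb field $R$ is transverse to a $C^0$-foliation $\mathcal{F}$. Co-orienting $\mathcal{F}$ by $R$, I claim $d\lambda|_{T\mathcal{F}}>0$. Indeed, for a basis $u,w$ of $T_p\mathcal{F}$ with $(R,u,w)$ positively oriented, $d\lambda(u,w)=\lambda\wedge d\lambda(R,u,w)>0$, because $\iota_R d\lambda=0$ and $\lambda(R)=1$. Thus $\mathcal{F}$ is hypertaut with $\beta=\lambda$.

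Now apply the Eliashberg--Thurston approximation from \Cref{prop:hyp} to get a negative contact structure $\xi_-$, $C^0$-close to $T\mathcal{F}$, with $d\lambda|_{\xi_-}>0$. The truncated symplectization $([0,1]\times Y,d(e^t\lambda))$ is then a weak filling of $(Y,\xi)\sqcup(-Y,\xi_-)$: it is strong at the top, and weak at the bottom because $d\lambda$ is positive on $\xi_-$. Since the symplectic form is exact, \cite[Prop.\ 4.1]{Eli} deforms it near $\{0\}\times Y$ to a strong filling, and \Cref{thm:strong_inf} finishes the argument.

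The main obstacle is regularity. For a $C^0$ foliation, $T\mathcal{F}$ need not exist pointwise, so "$d\lambda|_{T\mathcal{F}}>0$" has to be read in the sense of Massoni's framework \cite{Mas3}. I would handle this by invoking the $C^0$ version of the Eliashberg--Thurston theorem used there. It approximates $\mathcal{F}$ by contact structures that are close in the sense that their planes stay transverse to $R$. Transversality to $R$ alone already gives $d\lambda|_{\xi_-}>0$, by the same computation as above. So I only need the approximating $\xi_-$ to be transverse to $R$ and of the correct sign, which is exactly what \Cref{prop:hyp} provides.
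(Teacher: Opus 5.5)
Your proposal is correct and follows essentially the argument the paper intends, namely ``cosmetic modifications'' of the proof of \Cref{thm:AnoReeb}: take $\beta=\lambda$ to see $\mathcal{F}$ is hypertaut, use the negative approximating contact structure from \Cref{prop:hyp} to make the truncated symplectization a weak (then, via \cite{Eli}, strong) filling of $(Y,\xi)\sqcup(-Y,\xi_-)$, and apply \Cref{thm:strong_inf}; the second case follows directly from the exact filling in \Cref{prop:hyp}. One point to make explicit: the condition $d\lambda|_{\xi_-}>0$ must hold for the contact orientation of $\xi_-$ on $-Y$, and for a plane field transverse to $R$ this is arranged by co-orienting $\xi_-$ by $-R$, which is what your transversality argument implicitly uses.
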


\begin{proposition}
	Suppose $(X,\w)$ is a four-dimensional weak Liouville domain with $c_1^{\ECH}(X,\w)<\infty$. Let $M= H^{-1}(c)$ be a compact regular level set of a smooth Hamiltonian $H: X\to \R$. Then the characteristic flow on $M$ cannot be transverse to a co-orientable $C^0$-foliation of $M$.
	\end{proposition}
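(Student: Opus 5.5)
We adapt the proof of \Cref{thm:AnoHam}, replacing the Anosov bi-contact structure by contact structures approximating the foliation. Suppose for contradiction that the characteristic flow on $M=H^{-1}(c)$, generated by $v=X_H|_M$, is transverse to a co-orientable $C^0$-foliation $\mathcal{F}$ of $M$. As in that proof, fix a framed Hamiltonian structure $(\lambda,\w)$ on $M$ with Hamiltonian vector field $v$. Use the coisotropic neighbourhood $(-\varepsilon,\varepsilon)_t\times M$ with symplectic form $\w+\dd(t\lambda)$. After possibly replacing $H$ by $-H$, we may assume $\pd_t$ points toward the (connected, by \Cref{thm:inf_spec_invts}) boundary $(Y,\xi)$ of $X$.

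\textbf{Step 1.} Since $X$ is a weak Liouville domain, $\Omega=\dd\theta$ for some $\theta$, so $\w=\Omega|_M=\dd\beta$ with $\beta=\theta|_M$. Since $v$ is transverse to $\mathcal{F}$ and spans $\ker\w$, the form $\dd\beta|_{T\mathcal{F}}=\w|_{T\mathcal{F}}$ is nondegenerate. Because $\mathcal{F}$ is co-oriented and $M$ is oriented, $T\mathcal{F}$ is oriented. After reversing the co-orientation of $\mathcal{F}$ if necessary, we get $\dd\beta|_{T\mathcal{F}}>0$. Thus $\mathcal{F}$ is hypertaut.

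\textbf{Step 2.} Apply \Cref{prop:hyp} to obtain a negative contact structure $\xi_-$ that is $C^0$-close to $T\mathcal{F}$ and satisfies $\w|_{\xi_-}=\dd\beta|_{\xi_-}>0$. The orientation of the boundary must be matched to the side we keep. If the orientations are wrong, use $\xi_+$ on the opposite side instead, or flip the sign of $H$. Since positivity of $\w$ on $\xi_-$ is an open condition, for $\varepsilon$ small the form $\w-\varepsilon\dd\lambda$ is still positive on $\xi_-$. Hence $\{-\varepsilon\}\times -M$ is a weakly convex boundary of $W=[-\varepsilon,0]\times M$ for the contact structure $\xi_-$ on $-M$.

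\textbf{Step 3.} Let $D$ be the region bounded by $M$, and glue $W$ to $X\setminus D$ along $\{0\}\times M$. The result has strongly convex boundary $(Y,\xi)$ and weakly convex boundary $(-M,\xi_-)$. Near the latter component the symplectic form is exact, since $\w=\dd\beta$. So \cite[Prop.\ 4.1]{Eli} deforms it to a strong co-filling of $(Y,\xi)$. But $c_1(Y,\xi)=c_1^{\ECH}(X,\Omega)<\infty$, contradicting \Cref{thm:strong_inf}, applied with empty concave boundary.

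\textbf{Main obstacle.} The delicate step is the orientation bookkeeping in Step 2. We must check that the side of $M$ we keep carries the contact structure ($\xi_+$ or $\xi_-$) whose orientation makes that boundary component convex rather than concave, given the sign forced by $\pd_t$ pointing outward. We first try choosing the sign of $H$ to fix this. Failing that, we use the fact that \Cref{prop:hyp} supplies both $\xi_+$ and $\xi_-$ with $\dd\beta$ positive on each, so one of them fits whichever side is kept.
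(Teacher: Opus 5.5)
Your proposal is correct and is essentially the paper's argument. The paper gets this proposition by a cosmetic modification of the proof of \Cref{thm:AnoHam}: it replaces the Anosov-derived $\eta_-$ by the approximating contact structure from \Cref{prop:hyp}, which is exactly what you do once you observe that $\beta=\theta|_M$ makes $\mathcal{F}$ hypertaut. Your orientation worry is harmless. With $M$ oriented by $\lambda\wedge\w$ and $\pd_t$ pointing toward $\pd X$, the new boundary is always $-M$, so $\xi_-$ with a suitable co-orientation works. In any case, $\w$ is nonvanishing on both $\xi_\pm$.
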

	
\begin{proposition}
	Let $Y$ be a closed oriented three-manifold and $a\in H^2(Y;\Z)$. Suppose that the $U$ map on monopole Floer cohomology
	\[U: \bigoplus_{\{\mathfrak{s}:\, c_1(\mathfrak{s})=a\}} \widehat{HM}^\ast(Y,\mathfrak{s}) \to \bigoplus_{\{\mathfrak{s}:\, c_1(\mathfrak{s})=a\}}\widehat{HM}^\ast (Y,\mathfrak{s})\]
	satisfies $\ker(U) \subset \im(U)$. Then $Y$ admits no co-orientable $C^0$-hypertaut foliation $\mathcal{F}$ with Euler class $e(\mathcal{F})=a$.
\end{proposition}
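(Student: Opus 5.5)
We argue as in the proof of \Cref{cor:Umap}, with the Anosov Liouville domain replaced by the exact filling supplied by \Cref{prop:hyp}.

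Suppose for contradiction that $Y$ carries a co-orientable hypertaut $C^0$-foliation $\mathcal{F}$ with $e(\mathcal{F})=a$. \Cref{prop:hyp} produces contact structures $\xi_\pm$ that $C^0$-approximate $T\mathcal{F}$, together with an exact (hence strong) filling of $(Y,\xi_+)\sqcup(-Y,\xi_-)$. The filling $[-\varepsilon,\varepsilon]\times Y$ is connected and has two boundary components, so $(Y,\xi_+)$ is strongly co-fillable.

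Next we identify the Chern class. Since $\xi_+$ is a $C^0$-small perturbation of the oriented plane field $T\mathcal{F}$, the two are homotopic as oriented $2$-plane fields: the angle between them stays uniformly small, so a straight-line homotopy through nondegenerate plane fields exists. Hence
\[c_1(\xi_+)=e(\xi_+)=e(\mathcal{F})=a, \qquad c_1(\mathfrak{s}_{\xi_+})=c_1(\xi_+)=a.\]
So $\mathfrak{s}_{\xi_+}$ is one of the summands appearing in the hypothesis. The only point needing care is that the orientation conventions of \Cref{prop:hyp} make $\xi_+$ positively co-oriented with the same orientation as $T\mathcal{F}$. If the conventions were instead to give $-a$, one would apply the argument to the foliation with reversed co-orientation, which has Euler class $-a$.

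Because the $U$ map respects the decomposition by spin$^c$-structures, the condition $\ker(U)\subset\im(U)$ on the direct sum passes to each summand, in particular to $\widehat{HM}^\ast(Y,\mathfrak{s}_{\xi_+})$. Now run the argument of \Cref{cor:Umap}. By the hypertaut filling and \cite{HutTQFT, Ech}, $c(Y,\xi_+)\neq0$, and $U c(Y,\xi_+)=0$. Taubes' isomorphism is an isomorphism of $\F_2[U]$-modules, so $c(Y,\xi_+)$ lies in $\ker(U)\subset\im(U)$. Therefore $c_1(Y,\lambda)<\infty$ for any contact form $\lambda$ for $\xi_+$.

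This contradicts \Cref{thm:strong_inf}, applied to the strong filling of $(Y,\xi_+)\sqcup(-Y,\xi_-)$ with empty concave end.

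The main obstacle is the Euler class identification, namely making precise that the $C^0$ approximation in \Cref{prop:hyp} is close enough to give a homotopy of plane fields. Everything else is formal given the earlier results.
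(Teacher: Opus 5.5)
Your proposal is correct and is essentially the paper's argument: the paper obtains this proposition by a cosmetic modification of the proof of \Cref{cor:Umap}. It replaces the Anosov Liouville domain by the exact co-filling from \Cref{prop:hyp}, and uses that $\xi_+$ is $C^0$-close to (hence homotopic to) $T\mathcal{F}$, so that $c_1(\mathfrak{s}_{\xi_+})=e(\mathcal{F})=a$, exactly as you do. The one step you compress, $c(Y,\xi_+)\neq 0$, follows by capping off $(-Y,\xi_-)$ with a strong cap \cite{EH} as in the paper's proof of \Cref{cor:Umap}, and your orientation hedge is harmless but unnecessary.
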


We present a sample application of \Cref{prop:hypspec} to periodic Reeb flows. 
\begin{definition}
	A contact 3-manifold $(Y,\xi)$ is called \emph{Besse} if it admits a Reeb flow with all of its orbits closed. In this case, the flow must be periodic and the orbits define the structure of a Seifert fibration on $Y$ \cite{CGM}. If all orbits have the same minimal period, then the flow is called \emph{Zoll} and $(Y,\xi)$ is a prequantization contact structure.
\end{definition}

A Seifert fibred 3-manifold will admit a Besse Reeb flow if and only if it has negative Euler class \cite[Prop.\! 3.1]{LM}. Work of Eisenbud--Hirsch--Neumann, Jankins, and Naimi establishes necessary and sufficient conditions in terms of the Seifert invariants for a Seifert fibration to admit a foliation transverse to the fibration (see \cite[Thm.\!\ 1.2]{LM}). Such a transverse foliation is sometimes called \emph{horizontal}. In the case of a bonafide circle bundle $Y\to \Sigma_g$, this condition simplifies to the Milnor--Wood inequality; namely for $g\geq 1$, there is a horizontal foliation if and only if $|e(Y)| \leq 2g-2$. When both a Besse Reeb flow and a horizontal foliation exist, \Cref{prop:hypspec} implies the following.

\begin{corollary}\label{cor:Besse}
	Let $(Y,\xi)$ be a Besse contact 3-manifold.
	\begin{enumerate}[label=\textbf{\arabic*.}, leftmargin = 0.8cm, itemsep=0.2cm]
		\item If the Seifert-fibration defined by the Besse Reeb flow satisfies any of the conditions of \cite[Thm.\!\ 1.2]{LM}, then the ECH spectrum of $(Y,\xi)$ is infinite.
		\item In particular, if $(Y,\xi_\pre)$ is a prequantization contact structure over a genus $g\geq 2$ base and $e(Y) \geq 2-2g$, then the ECH spectrum of $(Y,\xi)$ is infinite.
	\end{enumerate}
\end{corollary}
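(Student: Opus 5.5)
The corollary should follow directly from the first alternative in \Cref{prop:hypspec}: it suffices to exhibit a Reeb flow for $\xi$ that is transverse to a $C^0$-foliation. Fix a Besse contact form $\lambda$ for $\xi$. By \cite{CGM}, its Reeb flow is periodic and its orbits are the fibres of a Seifert fibration $\pi: Y\to B$. The Reeb vector field $R$ is then a non-vanishing vector field tangent to the fibres, and it orients them.

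For part \textbf{1}, assume the Seifert invariants of this fibration satisfy one of the conditions of \cite[Thm.\ 1.2]{LM}. By the Eisenbud--Hirsch--Neumann, Jankins, Naimi theorem there is a horizontal foliation $\mathcal{F}$, that is, a $C^0$ (indeed $C^1$) codimension-one foliation transverse to every Seifert fibre. Since $R$ spans the fibres, $R$ is transverse to $T\mathcal{F}$ everywhere. In particular $\mathcal{F}$ is co-oriented by $R$, so it is co-orientable, which is the setting in which the hypertaut and Reeb-transverse arguments are run. Applying \Cref{prop:hypspec} to the Reeb flow of $\lambda$ shows that the ECH spectrum of $(Y,\xi)$ is infinite for every contact form for $\xi$.

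For part \textbf{2}, a prequantization contact structure is Zoll, so the fibration is a genuine circle bundle $Y\to\Sigma_g$ whose fibres are the Reeb orbits. In the Milnor--Wood setting for $g\ge 2$, a horizontal foliation exists exactly when $|e(Y)|\le 2g-2$. A Besse (hence prequantization) Reeb flow forces the Euler number to be negative \cite[Prop.\ 3.1]{LM}, so $e(Y)<0$. Combined with the hypothesis $e(Y)\ge 2-2g$, this gives $|e(Y)|\le 2g-2$, and part \textbf{1} then applies.

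The only real checkpoint is the sign and orientation bookkeeping. The Euler number convention must be the one used in \cite{LM}, under which Besse flows have negative Euler class; otherwise the inequality in part \textbf{2} would read the wrong way. The orientation of $Y$ must also be the one making $\xi$ a positive contact structure, so that the Seifert fibration oriented by $R$ is the one to which the Milnor--Wood and LM criteria are applied. Beyond this, no new analysis is needed, since all the Floer-theoretic content is contained in \Cref{prop:hypspec}.
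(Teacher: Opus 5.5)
Your proposal is correct and is essentially the paper's argument. The Besse Reeb orbits are the Seifert fibres, so a horizontal foliation supplied by \cite[Thm.\ 1.2]{LM} is transverse to the Reeb flow. In part 2 this foliation comes from Milnor--Wood once \cite[Prop.\ 3.1]{LM} gives $2-2g\le e(Y)<0$. The first alternative of \Cref{prop:hypspec} then applies.

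There is one small wording slip in part 2. The point is that a prequantization (Zoll) flow is Besse, not that "Besse, hence prequantization."
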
 
\begin{remark}
	The extremal case of prequantization bundles $(Y,\xi)$ with $e(Y)=2-2g$ was covered by \Cref{cor:inf_spec}. The ECH spectrum of some prequantization bundles over $S^2$ is known to be finite \cite{FR}, and we suspect more generally it is finite whenever \Cref{cor:Besse} doesn't apply.
\end{remark}
\begin{remark}
	Work of Cristofaro-Gardiner--Mazzucchelli \cite[Thm.\! 3.2]{CGM} characterizes Besse Reeb flows in terms of ECH spectral invariants. They prove that a contact 3-manifold $(Y,\lambda)$  is Besse if and only if there is a class $\gamma \in ECH(Y,\lambda)$ with $U\gamma\neq 0$ so that the ECH spectral invariants associated to $\gamma$ and $U\gamma$ coincide:
	\[\inf\{L\in\R| \gamma \in ECH^L(Y,\lambda)\}=\inf\{L\in\R | U\gamma \in ECH^L(Y,\lambda)\}.\]
	Our result implies in general it is not sufficient to check equality of elements of the ECH spectrum, one must look at the full collection of ECH spectral invariants.
\end{remark}

\subsection{Non-orientable surfaces and ECH capacities}
\label{ssec:non-orient}
Recall that the closed non-orientable surfaces $N_k$ are indexed by a positive integer $k$, where $N_k$ is the connected sum of $k$ copies of $\R P^2$. We have $N_1=\R P^2$, $N_2$ is the Klein bottle $K$, $N_3$ is Dyck's surface, and so forth. The Euler characteristic of $N_k$ is $2-k$ and its orientable double cover is the oriented surface $\Sigma_{k-1}$ of genus $k-1$.  We now prove the following.
\begin{proposition}
	Let $N$ be a closed non-orientable surface equipped with a Riemannian metric $g$. Then the ECH capacities of the unit disk cotangent bundle $D_g^\ast N$ associated to $g$ are finite and satisfy a Weyl law:
	\[\lim_{k\to \infty} \frac{c_k^{\mathrm{ECH}}(D_g^\ast N, \w_\can)^2}{k} = 4\pi \mathrm{vol}(N, g).\]
\end{proposition}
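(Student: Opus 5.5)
By the Weyl law for ECH capacities and the volume identity $\mathrm{vol}(D^\ast_g N,\w_\can)=\pi\,\mathrm{vol}(N,g)$, the displayed limit follows once finiteness is known: $c_k^2/(4k)\to \pi\,\mathrm{vol}(N,g)$ is exactly the claimed formula. So the work is to show finiteness, i.e.\ that the contact invariant of $(S^\ast N,\xi_\can)$ lies in the image of $U^k$ for every $k$. By the remark on metrics, finiteness is independent of $g$, so we may choose any convenient metric, and by monotonicity it suffices to embed $D^\ast_g N$ symplectically into some weak Liouville domain whose ECH capacities are all finite.

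The plan is to find such a domain via a \emph{Lagrangian embedding of $N$ into a space with finite capacities}. The key observation is that every closed non-orientable surface $N_k$ admits a Lagrangian embedding into $\R^4$ (equivalently into a large ball) when $k\not\equiv 2 \pmod 4$, by Givental's construction, and $N_k$ for $k\equiv 2\pmod 4$, $k\ge 6$, also embeds as a Lagrangian in $\R^4$ (Nemirovski and Shevchishin show only the Klein bottle is excluded). The Klein bottle case $N_2$ is already covered by \cite{MR}, and $\R P^2$ by \cite{FR}, though $\R P^2$ also has a Lagrangian embedding in $\R^4$. Given a Lagrangian $N\subset \R^4$, the Weinstein neighbourhood theorem embeds $D^\ast_{g} N$ for some small-radius metric into a ball $B^4(R)$, whose ECH capacities are finite. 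Then $c_k^\ECH(D^\ast_g N)\le c_k^\ECH(B^4(R))<\infty$ for all $k$, and rescaling the metric handles every unit disk bundle, exactly as in the argument preceding \Cref{cor:Lags}.

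The subtle point, and the main obstacle, is that the ECH capacities of $D^\ast_g N$ should be those defined via its contact boundary, while the Weyl law was stated for weak Liouville domains with finite capacities. Since $D^\ast_g N$ is a Liouville domain with boundary $(S^\ast N,\lambda_\can)$, both definitions agree, and the monotonicity bound shows $c_k(S^\ast N,\lambda_\can)<\infty$. So the Weyl law applies directly. The only input to check carefully is the existence of Lagrangian embeddings of $N_k$ in $\R^4$ for all $k\neq 2$. If I wanted to avoid citing this, I would try, as a fallback, to build the embedding by Lagrangian connected sum (surgery) of Givental's $\R P^2$'s and Lagrangian tori, which keeps the result inside $\R^4$ and changes $k$ by one or two at each step.
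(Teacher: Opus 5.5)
\textbf{Gap: the topological input is false.} Your argument rests on the claim that $N_k$ has a Lagrangian embedding in $\R^4$ for every $k\neq 2$, and this is not true.

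A closed non-orientable surface $N_k$ admits a Lagrangian embedding in $\R^4$ \emph{only if} $\chi(N_k)=2-k\equiv 0\pmod 4$. This is Audin's obstruction. For a Lagrangian, $J$ identifies the normal bundle with the tangent bundle, so the twisted normal Euler number is $e(\nu)=-\chi(N_k)$. Whitney's congruence $e(\nu)\equiv 2\chi\pmod 4$ for embedded surfaces in $\R^4$ then forces $\chi\equiv 0\pmod 4$. Givental \cite{Giv} realizes exactly the cases $k\equiv 2\pmod 4$ with $k\ge 6$. The theorem of Nemirovski--Shevchishin excludes the remaining case $k=2$, the Klein bottle; it does not say the Klein bottle is the only exclusion.

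So you have the congruence backwards. $\R P^2$ has no Lagrangian embedding in $\R^4$, and neither do $N_3,N_4,N_5,N_7,N_8,N_9,\dots$. Your argument proves finiteness only for $k\equiv 2\pmod 4$, plus $k=1,2$ from \cite{FR,MR}.

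Your fallback cannot repair this. The obstruction applies to every embedded Lagrangian in $\R^4$, however it is constructed, so Lagrangian surgery inside $\R^4$ can never change $k$ by one or two. There is also no Lagrangian $\R P^2$ in $\R^4$ to start from.

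\textbf{The missing idea is to leave $\R^4$.} The paper uses Rieser's construction \cite{Rie}: a symplectic blow-up along a ball meeting a Lagrangian $L$ produces a Lagrangian $L\#\R P^2$ in $X\#\overline{\C P^2}$.
\begin{itemize}
\item Blowing up $B^4(R)\supset N_{2+4j}$ at most three times gives every $N_k$ with $k\ge 6$.
\item Blowing up $D^\ast K$ near a point of the zero section $j=1,2,3$ times gives $N_3,N_4,N_5$.
\end{itemize}
These blown-up manifolds are not exact, so you cannot simply quote monotonicity into a ball. Instead, the complement of $D^\ast N_k$ is a strong, non-exact cobordism from $(S^3,\lambda_{\std})$ (respectively $(S^\ast K,\lambda_{\can})$) to $(S^\ast N_k,\varepsilon\lambda_{\can})$. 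The zero-class cobordism map $\Phi(X_k,\w_k,0)$ preserves $[\emptyset]$ and commutes with $U$. It therefore carries $U^i\gamma_i=[\emptyset]$ on the convex end to $[\emptyset]\in\im(U^i)$ on $S^\ast N_k$. This step relies on Hutchings' extension of ECH cobordism maps to strong cobordisms.

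Your deduction of the Weyl law once finiteness is known is correct and matches the paper.
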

\begin{proof}
	The ECH capacities of $D^\ast \R P^2$ for the round metric are computed by Ferreira--Ramos \cite{FR}. The capacities of $D^\ast K$ for the flat metric are computed by Miranda--Ramos \cite{MR}. For $j\geq 1$, Givental \cite{Giv} constructed Lagrangian embeddings of the non-orientable surfaces $N_{2+4j}$ with Euler characteristic divisible by four into $\R^4$. A suitably small tubular neighbourhood of these Lagrangians can be identified with a rescaling of $D^\ast N_{2+4j}$. The Lagrangian and its disk bundle are contained in some sufficiently large ball $B^4(R) \subset \R^4$, and so finiteness follows from monotonicity of the ECH capacities.
	
	Given a Lagrangian $L$ in a symplectic 4-manifold $(X,\w)$, we can perform a symplectic blow-up along a symplectic ball $B^4(\lambda)$  passing through $L$. Through a careful construction of Rieser \cite{Rie}, we may perform the blow-up so that $L$ lifts to a Lagrangian $\tilde{L}$ in the blow-up $(X\# \overline{\C P^2}(\lambda) ,\widetilde{\w}_\lambda)$ which smoothly is the real blow-up $L \# \R P^2$.
	
	Combined with the construction of Givental, we conclude that for every non-orientable surface $N_k$ for $k\geq 6$, there is a symplectic embedding of $D^\ast N_k$ into $B^4(R)$ blown-up in at most three points. Note the blow-ups only alter $B^4(R)$ in the interior, so that the contact boundary of the blown-up ball is still the standard tight contact 3-sphere $(S^3, \xi_{\std})$. Hence the above embeddings gives rise to strong symplectic cobordisms $(X_k,\w_k)$ from $(S^3, \lambda_\std)$ to $(S^\ast N_k , \varepsilon\lambda_\can)$ for some $\varepsilon$ sufficiently small. For each $i >0$, let $\gamma_i \in ECH(S^3, \lambda_\std,0)$ satisfy $U^i \gamma_i = [\emptyset]$. Then
	\[U^i \circ \Phi(X_k,\w_k, 0)\gamma_i = \Phi(X_k,\w_k,0)\circ U^i \gamma_i = \Phi(X_k,\w_k,0)[\emptyset]=[\emptyset]. \]
	Hence $[\emptyset] \in \im (U^i : ECH(S^\ast N_k,\xi_\can)\to ECH(S^\ast N_k,\xi_\can))$ for each $i$ and the ECH capacities of $(D^\ast N_k, \w_\can)$ are finite for $k\geq 6$.

	It remains to consider $N_3, N_4$, and $N_5$. For these examples, begin with the disk cotangent bundle of the Klein bottle $D^\ast K$. By the same Lagrangian surgery under blow-up, this time in a neighbourhood of a point in the zero-section, we can find a symplectic embedding of $D^\ast N_{2+j}$ into the blow-up of $D^\ast K$ at $j$ many points for $j=1,2,3$. The complement of the embeddings give rise to strong symplectic cobordisms from $(S^\ast K, \xi_\can)$ to $(S^\ast N_{2+k}, \xi_\can)$. The same argument as above shows the ECH capacities of $D^\ast N_k$ are finite for $k=3,4,5$. 
	
	The Weyl law follows immediately from the more general ECH Weyl law of Cristofaro-Gardiner--Hutchings--Ramos \cite{Weyl}, observing that $\mathrm{vol}(D_g^\ast N, \w_\can) =  \pi \mathrm{vol}(N,g)$, since each disk fibre has area $\pi$. 
\end{proof}

\begin{remark}
	It would be interesting to quantitatively control the ECH capacities of $D^\ast N_k$ for $k\geq 3$. For example, for a choice of hyperbolic metric $g$ on $N_k$, one may try to bound $c_1^{\ECH}(D^\ast_g N_k,\w_\can)$ from above in terms of some multiple of the length of the systole of $(N_k,g)$.
	\end{remark}
\begin{remark}
One would like to also understand the subleading asymptotics of the ECH Weyl law. One expects the capacities to remain a bounded distance from their leading asymptotic term, i.e.
\begin{equation}\label{eq:subleading}
	 c_k^\ECH(D^\ast_g N, \w_\can) = \sqrt{4\pi \mathrm{vol}(N,g)k} +O(1) \quad (k\to\infty).
\end{equation}
Recent remarkable work of Edtmair \cite{Edt} implies that if $(X,\w)$ is symplectic with smooth boundary and embeds into a symplectic manifold for which the error term in the Weyl law of the ECH capacities is bounded, then the same is true $(X,\w)$. In particular, this implies the Weyl law error term is $O(1)$ for all smooth star-shaped domains.

It is easy to check the error term in the ECH spectrum Weyl law is $O(1)$ for the standard contact form on $S^3$ and the contact form on $S^\ast \R P^2$ given by the round metric, as in \cite{FR}. By Edtmair's work, this then implies the formula \eqref{eq:subleading} is satisfied for any Finsler metric $g$ where $N=\R P^2$ or $N=N_{2+4k}$ is a non-orientable surface with a Lagrangian embedding in $\R^4$. 

By a version of Polterovich's Lagrangian surgery, which is described in the next subsection, if a surface $L$ has a Lagrangian embedding into $(X,\w)$, then $L\#4\R P^2$ does too. This implies there is a Lagrangian embedding of $N_{1+4k}$ into $T^\ast \R P^2$ for all $k$. Hence \eqref{eq:subleading} is satisfied whenever $N=N_{1+4k}$ as well. By the same surgery argument, it would suffice to show \eqref{eq:subleading} for $N=N_2,N_3,N_4$ to prove it for all remaining cases.
\end{remark}

\subsection{High-genus Lagrangians and infinite capacities}
\label{ssec:Lags}

In light of \Cref{cor:Lags}, one way to demonstrate the ECH capacities of a symplectic 4-manifold are infinite is to find within it an embedded Lagrangian submanifold of genus two or greater. We now present a couple of pertinent examples.

Our constructions will use Polterovich's Lagrangian surgery procedure \cite{Pol}. Suppose $L_1, L_2$ are two Lagrangians in a symplectic 4-manifold which intersect transversely at a point $p$. In a Darboux chart around $p$, we see two Lagrangian planes in $\R^4$ intersecting at the origin. Polterovich shows we may locally smooth out this intersection to a tube $I\times S^1$ connecting the two Lagrangians. We will thus obtain a Lagrangian which is topologically the connected sum $L_1\# L_2$. The same procedure applies to self-intersections, but with an additional subtlety regarding orientations. If $L$ is an oriented Lagrangian with a transverse self-intersection at $p$, the intersection index at $p$ may be plus or minus one. If it is plus one, surgery will give $L\# T^2$. If the index is minus one, the tube we glue in swaps orientations and we obtain a connected sum with the Klein bottle $L \# K$. If $L$ is non-orientable, the surgery can either be described as $L\# T^2$ or $L \# K$ (topologically these are the same).

\begin{enumerate}[label=\textbf{\arabic*.}, leftmargin = 0.8cm]
\item Consider $T^4$ with coordinates $\theta_1,\ldots, \theta_4$ and a product symplectic form 
\[\w_{a,b}=a\cdot  \dd\theta_1\wedge \dd\theta_2+b\cdot \dd\theta_3\wedge\dd\theta_4\qq{with} a,b>0.\] 
Given two embedded Lagrangian tori  $ \{x_1\} \times S^1 \times \{x_1\} \times S^1$ and $S^1 \times \{x_2\} \times S^1 \times \{x_2\}$ inside $T^4$,  Polterovich's surgery produces a genus two Lagrangian in $(T^4,\w_{a,b})$.  We can apply the same surgery procedure to $\{x_1\} \times S^1 \times \{x_1\} \times S^1$ and a collection of Lagrangians 
\[S^1 \times \{x_2\} \times S^1 \times \{x_2\},\ldots , S^1 \times \{x_g\} \times S^1 \times \{x_g\}\]
to produce a Lagrangian of any higher genus $g$. 

More generally, let $(\Sigma, \sigma)$ and $(\Pi,\rho)$ be  surfaces equipped with area forms and each of genus at least one. Consider two closed curves $\gamma_1, \eta_1 \subset \Sigma$ intersecting transversely at a single point, and likewise for $\gamma_2, \eta_2 \subset\Pi$. Then $\gamma_1 \times \gamma_2$ and $\eta_1\times \eta_2$ are Lagrangian tori in $\Sigma \times \Pi$ with the product symplectic form which transversely intersect at a single point. Surgery yields a Lagrangian of genus two. 

\item One might also consider a non-trivial symplectic fibration 
\[(\Pi,\rho) \hookrightarrow (M,\w) \to (\Sigma,\sigma)\] with monodromy determined by a homomorphism $\phi: \pi_1(\Sigma) \to \mathrm{Symp}(\Pi, \rho)$. If one can find curves $\gamma_1, \eta_1 \subset \Sigma$ and $\gamma_2, \eta_2 \subset\Pi$ as above so that $\phi([\gamma_1])$ preserves $\gamma_2$ and $\phi([\eta_1])$ preserves $\eta_2$, then we can repeat the above construction to yield a genus two Lagrangian in $(M,\w)$.

As one example of this, recall the Kodaira--Thurston manifold, which was the first construction of a symplectic manifold with no K\"{a}hler structure \cite{Thu}. This  is the $T^2$ bundle over $T^2$ with monodromy $\phi: \pi_1(T^2) \cong \Z^2 \to \mathrm{SL}(2,\Z)$ given by
\[\phi(1,0) = \pmqty{1 & 0\\0& 1} \qand \phi(0,1) = \pmqty{1&1\\0&1}.\] Let $\gamma_1$ and $\eta_1$ denote simple closed curves in $T^2$ which intersect transversely at a single point and represent $(1,0)$ and $(0,1)$ in $\pi_1 (T^2)$ respectively. Let $\eta_2$ be a closed curve in $T^2$ preserved by the Dehn twist $\phi(0,1)$ and let $\gamma_2$ be any closed curve intersecting $\eta_2$ once transversely. Then as above, $\gamma_1\times \gamma_2$ and $\eta_1\times \eta_2$ are Lagrangian tori which can be surgered to yield a genus two Lagrangian.

\item Consider a projective K3 surface $X$ with a K\"{a}hler form $\w$, and suppose $\sigma: X\to X$ is an anti-symplectic involution so that $\sigma^\ast\w =-\w$. The fixed points of $\sigma$ then define a Lagrangian surface $X^\sigma$  in $X$. For example, if $X$ is the vanishing set of a quartic in $\C P^3$, then the real locus $X_\R = X \cap \R P^3$ will be one such Lagrangian. It turns out $X^\sigma$ is either a pair of tori, or a surface of genus up to ten along with a collection of spheres \cite{Nik}. In particular, we may find projective K\"ahler K3 surfaces with Lagrangians of genus $g\leq 10$. A particularly classical example goes as follows. Let $C\to \C P^2$ be a smooth sextic curve, which then has genus 10. Let $r: X\to \C P^2$ be a ramified degree two cover along $C$. Then this is a K3 with an anti-symplectic covering involution $\sigma$. The fixed point set $X^\sigma = r^{-1}(C)$ is a genus 10 Lagrangian. 

An alternative construction of Lagrangians of any genus in symplectic K3 surfaces is given by Sivek--Van~Horn-Morris \cite{SVHM} using the description of the elliptic surface $E(2)$ as a symplectic K3. Recall $E(1):=\C P^2\# 9\overline{\C P^2}$ admits a singular torus fibration over the sphere with twelve singular fibres. If $\alpha$ and $\beta$ are simple closed curves in $T^2$ intersecting once transversely, the Picard--Lefschetz formula implies we may take $E(1)$ so that half of the singular fibres have vanishing cycle $\alpha$ and the other half have vanishing cycle $\beta$. Then $E(2):=E(1)\#_F E(1)$ is formed from the Gompf fibre sum of two copies of $E(1)$ along regular fibres $F$. A neighbourhood of the sum region is identified with $T^2 \times (0,1)\times S^1$ in which one can find $g$ disjoint Lagrangian tori $\alpha \times \{\mathrm{pt}\} \times S^1$. We can also find a Lagrangian sphere living over a ``matching path" between singular fibres with vanishing cycle $\beta$ in the two copies of $E(1)$ \cite[\S 16]{Sei}. This sphere will intersect each torus once transversely; the resulting surgery is a genus $g$ Lagrangian in $E(2)$.  A similar idea works for other elliptic surfaces 
\[E(n,g) = \left(\scalebox{1.3}{\#}_F^n E(1)\right)\#_F (T^2\times \Sigma_g)\]
by applying further fibre sums to $E(2)$. 

\item A \emph{plumbing} of two cotangent disk bundles $D^\ast M_1 \#_{p} D^\ast M_2$ is given by choosing balls $B_1 \subset M_1$, $B_2 \subset M_2$ and gluing the cotangent bundles via a symplectomorphism $D^\ast B_1 \cong D^\ast B_2$ that interchanges the base and fibre directions. The plumbing has the structure of a Liouville domain with the zero sections $M_1$ and $M_2$ as embedded Lagrangians. These Lagrangians intersect transversely at a point and so we may also resolve this intersection to obtain a Lagrangian copy of the connected sum $M_1 \# M_2$. In this way one can plumb along a tree of surfaces and produce a Lagrangian of genus $g\geq 2$, provided the sum of the genera of the surfaces in the plumbing is at least two.
\end{enumerate}

\begin{corollary}\label{cor:Lag_egs}
The following hold. 
\begin{enumerate}[label=\textbf{\arabic*.}, leftmargin = 0.8cm]
	\item The ECH capacities of any product of closed oriented surfaces each of genus at least one are infinite.
	\item The ECH capacities of the Kodaira--Thurston manifold \cite{Thu} are infinite for any form arising from a symplectic fibration.
	\item On every elliptic surface $E(n,g)$ with $n\geq 2$, there are symplectic forms for which the ECH capacities are infinite.
	\end{enumerate}
	\begin{enumerate}[resume,label=\textbf{\arabic*.}, leftmargin = 0.8cm]
	\item For any plumbing of cotangent disk bundles over a tree of surfaces of combined genus at least two, all its ECH capacities are infinite.
\end{enumerate}
\end{corollary}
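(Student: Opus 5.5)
The plan is to deduce each item from \Cref{cor:Lags}, or more precisely from its contrapositive. Suppose $(X,\w)$ contains a closed embedded oriented Lagrangian $L$ of genus $g\geq 2$. By the Weinstein neighbourhood theorem, some small-radius cotangent disk bundle $(D^\ast L,\w_\can)$ embeds symplectically into $(X,\w)$. \Cref{thm:cot_inf} and the remark following it say every such disk bundle has $c_k^\ECH=\infty$. Monotonicity of the capacities then gives $c_k^\ECH(X,\w)=\infty$ for all $k\geq 1$. So for each item the work reduces to exhibiting an oriented embedded Lagrangian of genus at least two, using constructions 1--4 listed just before the statement.

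For item 1, take the tori $\gamma_1\times\gamma_2$ and $\eta_1\times\eta_2$ in $\Sigma\times\Pi$. They meet transversely in exactly one point, and Polterovich surgery produces $T^2\# T^2$ with genus two. For item 2, use the same construction in the Kodaira--Thurston manifold with curves compatible with the monodromy, as in construction 2. For item 3, use the Sivek--Van~Horn-Morris Lagrangians in $E(2)$ and the iterated fibre sums $E(n,g)$. For item 4, resolve the transverse intersection points of the zero sections along the plumbing tree; the result is a Lagrangian copy of the connected sum of the base surfaces, whose genus equals the combined genus, which is at least two.

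The main obstacle is orientability of the surgered surfaces. Surgering two distinct oriented Lagrangians at a single transverse point gives an oriented connected sum if the handle is attached with the appropriate orientation convention. One needs to check that one of the two local Polterovich resolutions respects the orientations, and I would verify this in the Darboux model $\R^2\oplus i\R^2$. If the local index were $-1$, I would reverse the orientation of one summand; this is possible because the two surfaces are distinct and the intersection is a single point. Items 1, 2 and 4 only involve surgery at single intersection points between distinct components, or along a tree, so the result is orientable.

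For item 3, the sphere meets each of the $g$ tori once, which is again a tree-like configuration, so the same orientation adjustment works. I would also need to check that the fibre-sum symplectic form on $E(n,g)$ keeps these tori and the matching-path sphere Lagrangian. That holds for the symplectic forms used in \cite{SVHM}, which explains why item 3 only claims the conclusion for some symplectic forms. Similarly, in item 2 the restriction to forms coming from a symplectic fibration is what guarantees that the product tori are Lagrangian.
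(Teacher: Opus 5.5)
Your proposal is correct and follows the paper's route: the corollary is the contrapositive of \Cref{cor:Lags} (Weinstein neighbourhood, \Cref{thm:cot_inf}, and monotonicity), applied to the genus $\geq 2$ oriented Lagrangians from constructions 1--4. Your orientability check, which the paper leaves implicit, is sound: each surgery joins two distinct connected components, and the intersection pattern is a tree, so the resulting connected sum is orientable whatever the local sign.
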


\subsection{Constructing Anosov Hamiltonian hypersurfaces}
\label{ssec:AnoHamconstruction}

We give a very simple construction of Anosov Hamiltonian hypersurfaces in $\R^4$, addressing Herman's \Cref{q:Herm} and proving \Cref{prop:AnoHamconstruction}.

Note to prove \Cref{prop:AnoHamconstruction}, it suffices to construct infinitely many pairwise non-homeomorphic Hamiltonian hypersurfaces in $\R^4$, since by Darboux's theorem, these will then give hypersurfaces in an arbitrary symplectic 4-manifold.

Recall that there are non-orientable closed embedded Lagrangian submanifolds in $\R^4$ of any negative Euler characteristic divisible by four, as first shown by Givental \cite{Giv}. Let $L_n \subset \R^4$ be any such Lagrangian with Euler characteristic $\chi(L_n) = -4n$. Topologically, $L_n$ is the connected sum of $4n+2$ copies of $\R P^2$. Because it has negative Euler characteristic, $L_n$ admits a hyperbolic metric $g_n$. Let $D^\ast L_n$ denote the unit-disk bundle of $L_n$ with respect to the metric $g_n$. By the Weinstein neighbourhood theorem, for some sufficiently small $\varepsilon$, a tubular neighbourhood of $L_n$ gives a symplectic embedding $i: (D^\ast L_n, \varepsilon \w_\can) \hookrightarrow (\R^4,\w_\std)$. Let $h_n: D^\ast L_n\to \R$ be the norm-squared function induced by the metric $g_n$ on the unit tangent bundle, i.e. for $(x,v)\in D^\ast L_n$,  let $h_n(x,v)= \|v\|^2_{g_n}$. Take $H_n: \R^4 \to \R$ to be any smooth function which agrees with $h_n$ on the image $i(D^\ast L_n)$ and is greater than or equal to one away from $i(D^\ast L_n)$.

  Consider $Y_n = H_n^{-1}(1/2)$, which is a (contact-type) hypersurface in $\R^4$. This is a cotangent circle bundle over $L_n$ and the characteristic flow of $X_{H_n}$ on $Y_n$ is the cotangent lift of geodesic flow on $L_n$ with respect to $g_n$ (slowed down by a factor of $\frac{\varepsilon}{2}$). Because $g_n$ is hyperbolic, this flow is (non-oriented) Anosov. Clearly the hypersurfaces $Y_n$ for distinct $n$ are pairwise non-homeomorphic.

\begin{remark}
Note for $n\geq 3$, there cannot be a closed embedded (orientable or non-orientable) Lagrangian admitting a metric of negative sectional curvature in $\R^{2n}$ \cite[Thm.\!\! 1.7.5]{EGH}. In fact, because Anosov geodesics are always non-contractible with Morse index zero \cite{Kli},	 the same SFT-based proof works for the (potentially larger) class of Lagrangians admitting an Anosov geodesic flow. Hence Anosov geodesic flow can never appear on a Hamiltonian hypersurface in $\R^{2n}$ with $n\geq 3$. 

Further, a suspension flow (Anosov or otherwise) also cannot appear on a Hamiltonian hypersurface $Y$ in $\R^{2n}$. To observe this, such a flow would mean $Y$ fibres over $S^1$ and a fibre gives a closed embedded $(2n-2)$-manifold $Z \subset Y$ positively transverse to the suspension flow. If the suspension flow were generated by a Hamiltonian structure $\Omega$, then $\int_Z \Omega^{n-1} >0$. But if $Y$ is a Hamiltonian hypersurface in $\R^{2n}$, then the Hamiltonian structure $\Omega$ is pulled back from the symplectic form $\w_{\std}$ on $\R^{2n}$. In particular, $\Omega$ is exact and Stokes' theorem implies $\int_Z \Omega^{n-1} =0$.

Examples of Anosov flows in dimensions greater than three which are not of geodesic or suspension type are rather scarce. Hence it appears probable Herman's \Cref{q:Herm} has a negative answer in higher dimensions.
\end{remark}

\subsection{Speculations on elementary capacities}
There are a collection of capacities
\[0<c_1^{\mathrm{Alt}}(X,\w) \leq c_2^{\mathrm{Alt}}(X,\w)\leq c_3^{\mathrm{Alt}}(X,\w)\leq \cdots \leq +\infty\] 
introduced by Hutchings \cite{HutAlt} as an elementary alternative to the ECH capacities. These behave similarly to the ECH capacities but are defined in terms of max-min problems over the abstract space of $J$-holomorphic curves and avoid the use of ECH and Seiberg--Witten theory. Roughly, the capacity $c_k^{\mathrm{Alt}}(X,\w)$ measures the minimal $L>0$ so that for every (admissible in the appropriate sense) almost complex structure $J$ and collection of $k$ points, there is a $J$-holomorphic curve through these points with energy at most $L$. These capacities are inspired by similar invariants defined by McDuff--Siegel \cite{MS} as an elementary alternative to Siegel's higher capacities \cite{Sie}.

For Liouville domains, one always has the inequality
\[c_k^{\mathrm{Alt}}(X,\w) \leq c_k^{\mathrm{ECH}}(X,\w).\]
Thus in the cases above where we have shown the ECH capacities to be infinite, it is undetermined whether the alternative capacities are infinite. 
\begin{question}\label{q:alt}
	Given a four-dimensional Liouville domain $(X,\w)$ with disconnected boundary, is it guaranteed that its alternative ECH capacities $c_k^\mathrm{Alt}(X,\w)$ are infinite? 
\end{question}

Recall that, unlike with the ECH capacities, the finiteness of alternative ECH capacities is not an intrinsic property of the boundary (although no examples of this phenomenon are yet known as far as the author is aware). The curves counted by alternative capacities may depend heavily on one's choice of filling. Thus the following  question is independent (in either direction) of the previous one.

\begin{question}
	Given a closed oriented surface $\Sigma_g$ of genus $g\geq 2$, are the alternative ECH capacities of a compact cotangent disk bundle $c_k^\mathrm{Alt}(D^\ast\Sigma_g,\w)$ infinite?
\end{question}

Note that in the case of genus $g=0$, it is known that $c_k^\mathrm{Alt}(D^\ast S^2, \w)$ is strictly less than $c_k^\mathrm{ECH}(D^\ast S^2, \w)$ for certain $k$ \cite[Rmk.\!\!\! 14]{HutAlt}. So it is plausible that in higher genus the alternative ECH capacities see holomorphic curves missed by ECH.

Given \Cref{prop:rsft}, we may also be interested in how capacities constructed from (rational) SFT act in domains with disconnected boundary. These include the capacities $\mathfrak{g}_k(X,\w)$ and $\mathfrak{r}_k(X,\w)$ constructed by Siegel \cite{Sie}. Also there are the elementary versions $\widetilde{\mathfrak{g}}_k(X,\w)$ defined by McDuff--Siegel \cite{MS}. 

\begin{question}\label{q:sie}
	Given a Liouville domain $(X,\w)$ with disconnected boundary, is it guaranteed that Siegel's capacities $\mathfrak{g}_k(X,\w)$ or $\mathfrak{r}_k(X,\w)$ are infinite? Is it guaranteed that its McDuff--Siegel capacities $\widetilde{\mathfrak{g}}_k(X,\w)$ are infinite? 
\end{question}

\begin{figure}[h]
	\includegraphics[scale=0.65]{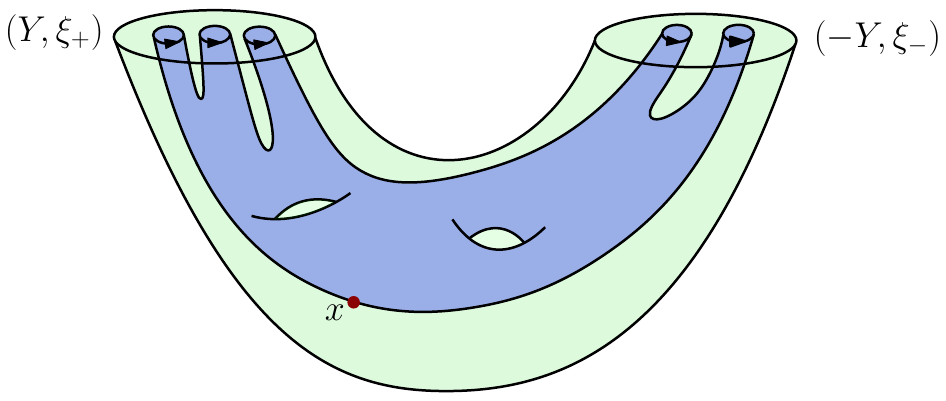}
	\caption{A point-constrained curve that might plausibly exist in an Anosov Liouville domain $I\times Y$ and be seen by $c_1^{\mathrm{Alt}}(I\times Y,\w)$.}
	\label{img:cob_alt}
\end{figure}

Note that the content of the proof of \Cref{thm:inf_spec_invts} is not sufficient to resolve these questions in the affirmative. And in fact, the author's best guess is that the answers to Questions \ref{q:alt} and \ref{q:sie} are no, at least for some of the Anosov Liouville domains. Roughly speaking, in proving \Cref{thm:inf_spec_invts} we rule out non-zero algebraic counts of holomorphic curves in $(X,\w)$ satisfying a point constraint with positive ends all in a single boundary component of $X$. But for example, this does not rule out the existence of point-constrained holomorphic curves with positive ends at \emph{all} of the boundary components of $X$ (see \Cref{img:cob_alt}). Such curves are not seen by the ECH or RSFT capacities we consider, but they may exist and their energy might provide a finite bound on the alternative ECH capacities or the other capacities of Siegel and McDuff--Siegel. 

As a first step towards answering \Cref{q:alt}, one might try to understand the ECH cobordism map associated to an Anosov Liouville domain.

\appendix
\section{Analogues for rational symplectic field theory}
\label{app:rsft}
The proof of \Cref{thm:inf_spec_invts} is general enough to apply to some other SFT and Floer-type spectral invariants, including in higher dimensions. For example, an analogous theorem will hold for the Gutt--Hutchings capacities \cite{GH} of Liouville domains with disconnected boundary. However, this is not so interesting because, as noted in \Cref{rmk:taut}, the Liouville domains $I\times Y$ we consider have hypertight contact forms on their boundary (including the examples in higher dimensions).

 Heuristically, the first Gutt--Hutchings capacity of a Liouville domain $(X,\w)$ measures the action of a Reeb orbit which bounds a (perturbed) index two pseudoholomorphic plane. In particular, the orbit must be contractible in $X$. But if $(Y,\lambda)$ is hypertight, then all its Reeb orbits cannot be contractible in the domain $I\times Y$. Hence the first and all higher Gutt--Hutchings capacities of $I\times Y$ will be infinite if its boundary is hypertight. 
 
 \begin{remark}
 	It is an interesting question if there is a Liouville domain of the form $I\times Y$ whose boundary components do not admit hypertight contact forms. Note one can find more complicated Liouville domains with disconnected non-hypertight boundary. For example, one can take an Anosov Liouville domain and attach a pair of Liouville domains with virtually overtwisted boundary to the two boundary components via Weinstein one-handles. The result is a Liouville domain with two virtually overtwisted (and hence not hypertight) boundary components. Of course the Gutt--Hutchings capacities of this must be infinite because it contains a Liouville embedding of a subdomain with hypertight boundary.
 \end{remark}
 
If we instead consider theories which count non-cylindrical curves, then understanding their spectral invariants becomes more complicated. We study one such example of invariants constructed from a ``$q$-only" formulation of rational symplectic field theory, as sketched by Hutchings \cite{HutRSFT}.

\subsection{Brief review of rational SFT}
Symplectic field theory (SFT) was originally proposed by Eliashberg--Givental--Hofer \cite{EGH}. Wendl \cite{WenSFT} has written an excellent manuscript describing much of the background for and details of SFT. We follow a version of rational SFT explained by Hutchings \cite{HutRSFT}; a more algebraically-sophisticated version of this theory has been explored by Latschev, Siegel, and Moreno--Zhou \cite{Lat, Sie, MZ, MZ2}.

\begin{assumption}\label{assump:science_fiction}
	We are assuming from here on out that one has developed a suitable  perturbation framework, say from polyfolds, implicit atlases, or global Kuranishi charts, to coherently define virtual oriented counts of rational holomorphic curves in symplectic field theory when transversality fails. These counts should be coherent for different variants of SFT and agree with counts of honest holomorphic curves when transversality can be obtained. 
\end{assumption}

Let $(Y,\lambda)$ be a contact manifold of any odd dimension $2n-1$; i.e. $\lambda$ is a one-form on $Y$ and $\lambda\wedge (\dd\lambda)^{n-1}>0$. The Reeb vector field and its orbits are defined as in dimension three. We assume that $\lambda$ is generic so that all its Reeb orbits are \emph{non-degenerate}, again defined as in dimension three. To an orbit $\gamma$, we can associate a \emph{Conley--Zehnder index} $\mu_{\mathrm{CZ}}(\gamma)$, which is independent of choices modulo two (see \cite[\S3.4]{WenSFT} for details). We say that a Reeb orbit $\gamma$ is \emph{bad} if it is a double cover of another Reeb orbit whose Conley--Zehnder index is of opposite parity, and otherwise we say $\gamma$ is \emph{good}.  

To every good orbit $\gamma$, we associate a variable $q_\gamma$. Let $\mathcal{A}$ be the ideal of non-constant polynomials inside the supercommutative $\Q$-algebra on variables $q_\gamma$ for all good Reeb orbits $\gamma$; we give this a $\Z/2$-grading by declaring
\[|q_\gamma|= n+1 +\mu_{\mathrm{CZ}}(\gamma).\]
Let $\mathfrak{A}$ be the supercommutative symmetric algebra on $\mathcal{A}$. 

We also define a $\Z/2$-graded differential algebra $\mathfrak{D}$ which is the $\Q$-algebra generated by the variables $q_\gamma$ and formal derivatives $\pdv{q_\gamma}$, each of grading $n+1+\mu_{\mathrm{CZ}}(\gamma)$, for every good orbit $\gamma$. We make $\mathfrak{D}$ a supercommutative Poisson algebra by imposing for each $\gamma$ that
\[\{\pdv{q_{\gamma}}, q_\gamma\} = 1 \]
 and setting all other pairs of variables to Poisson supercommute.
 There is a natural action
\[\mathfrak{D}\times \mathfrak{A}\to \mathfrak{A},\quad (\mathbf{d}, x) \mapsto \mathbf{d}\cdot x\]
where an operator $q_{\alpha_1}\cdots q_{\alpha_k} \pd_{q_{\beta_1}} \cdots \pd_{q_{\beta_\ell}}$ acts as follows on a symmetric product $x_1\otimes \cdots \otimes x_n $ of elements of $\mathcal{A}$. We can let each partial derivative $\pd_{\beta_i}$ act on a distinct term $x_j$ by a graded Leibniz rule and form the product in $\mathcal{A}$. Multiplying that on the left by $q_{\alpha_1}\cdots q_{\alpha_k}$ yields a new element of $\mathcal{A}$. The tensor product of this with all the terms $x_t$ which did not have a derivative applied yields an element of $\mathfrak{A}$. The action of $q_{\alpha_1}\cdots q_{\alpha_k} \pd_{q_{\beta_1}} \cdots \pd_{q_{\beta_\ell}}$ is the sum over all such elements of $\mathfrak{A}$ from all choices of which derivative $\pd_{\beta_i}$ acts on which term $x_j$. This action is compatible with the Poisson structure in the sense that for homogeneous elements $\mathbf{d},\mathbf{e}\in\mathfrak{D}$ and $x\in\mathfrak{A}$,
\begin{equation}\label{eq:Pois}
\{\mathbf{d}, \mathbf{e}\} \cdot x = \mathbf{d}\cdot (\mathbf{e}\cdot x) - (-1)^{|\mathbf{d}|\cdot |\mathbf{e}|}\mathbf{e}\cdot (\mathbf{d}\cdot x).	
\end{equation}

 Choose a generic admissible almost-complex structure $J$ on $\R\times Y$. We then define a differential operator called the \emph{RSFT Hamiltonian}
\[ \mathbf{h} := \sum_{\Gamma_+,\Gamma_-,A} \frac{\#^{\mathrm{vir}}\left(\mathcal{M}_{0,0}^{\mathrm{ind}\,1}(\R\times Y, J, A, \Gamma_+, \Gamma_-) \right)}{|\Gamma_+|!\,|\Gamma_-|!\,\kappa_{\Gamma_-}} q^{\Gamma_-} \pdv{q^{\Gamma_+}} \in \mathfrak{D}.\]
Here we are summing over all lists of good orbits $\Gamma_+, \Gamma_-$  and relative homology classes $A\in H_2(Y, \Gamma_+\cup \Gamma_-)$. The term $\#^{\mathrm{vir}}(\cdots) \in \Q$ is the orbifold (virtual) count of genus-zero index-one connected finite-energy $J$-holomorphic curves in $\R\times Y$ positively/negatively asymptotic to $\Gamma_\pm$ in the homology class $A$ modulo reparametrization and their automorphisms (which makes sense in light of \Cref{assump:science_fiction}). Also $\kappa_{\Gamma_+}$ is the product of the covering multiplicities of all orbits in $\Gamma_+$ and $q^{\Gamma_\pm}$ is the product of the $q^\gamma$ terms for all $\gamma$ in $\Gamma_\pm$.

Under \Cref{assump:science_fiction}, the combinatorics of the boundaries of the moduli spaces of index 2 genus zero curves imposes the relation $\{\mathbf{h}, \mathbf{h}\}=0$. One checks from the index formula for holomorphic curves that $\mathbf{h}$ has odd degree, so  \eqref{eq:Pois} implies for all $x\in \mathfrak{A}$ that
\[\mathbf{h}\cdot (\mathbf{h}\cdot x) = \frac{1}{2} \{\mathbf{h},\mathbf{h}\}\cdot x =0. \]
We thus obtain a chain complex $(\mathfrak{A}, \pd)$ where $\pd x = \mathbf{h}\cdot x$.
\begin{definition}
We define \emph{($q$-only) rational symplectic field theory} $RH(Y,\lambda)$ to be the homology of $(\mathfrak{A}, \pd)$.
\end{definition}

Given a disjoint union $(Y,\lambda)=(Y_1, \lambda_1)\sqcup \cdots \sqcup (Y_k,\lambda_k)$, there is a natural map
\begin{equation}\label{eq:rsftdis}
 j: RH(Y_1,\lambda_1)\otimes\cdots \otimes RH(Y_k, \lambda_k) \to RH(Y,\lambda)
\end{equation}
given on the chain level as the inclusion of algebras $\mathfrak{A}_{Y_1}\otimes\cdots \otimes \mathfrak{A}_{Y_k} \hookrightarrow \mathfrak{A}_Y$. This map is injective; it has a right inverse which on the chain level takes a symmetric product of algebra generators and divides it into the symmetric product of generators from each component $Y_i$.

Rational SFT has an $H_1(Y;\Z)$-grading given by
\[RH(Y,\lambda) = \bigoplus_{\Gamma\in H_1(Y)} RH(Y,\lambda, \Gamma),\]
where $RH(Y,\lambda, \Gamma)$ is the homology of the submodule $\mathfrak{A}(\Gamma) \subset \mathfrak{A}$ generated by elements whose total homology class is $\Gamma$. This makes sense because for any generator $x$ of $\mathfrak{A}$, $\mathbf{h}\cdot x$ is always homologous to $x$ and so $(\mathfrak{A}(\Gamma), \pd)$ is a subcomplex.

\subsection{Structures on rational SFT}

As in ECH, there is an $\R$-filtration on $\mathfrak{A}$. For $L>0$, let $\mathfrak{A}^L$ be the subspace of $\mathfrak{A}$ generated by symmetric products of monomials in the variables $q_\gamma$ so that the total action of all variables in the symmetric product is less than $L$. By Stokes' theorem, every non-zero term in the operator $\mathbf{h}$ has $\mathcal{A}(\Gamma_+)\geq \mathcal{A}(\Gamma_-)$ and so the differential $\pd$ preserves this filtration. Hence rational symplectic field theory obtains an $\R$-filtration and we may define $RH^L(Y,\lambda)$ as the homology of the subcomplex $(\mathfrak{A}^L,\pd)$.

Given an exact symplectic cobordism $(X,\w)$ from $(Y_+,\lambda_+)$ to $(Y_-,\lambda_-)$, there is a filtered homomorphism
\[\Phi^L(X,\w): RH^L(Y_+,\lambda_+)\to RH^L(Y_-,\lambda_-) \quad \text{for $L>0$}\]
which is induced on the chain level from a map $\phi^L(X,\w,J)$ counting rational $J$-holomorphic curves in the symplectic completion of $X$. The precise algebraic formalism is explained in \cite{HutRSFT}. These maps are functorial under composition of cobordisms. In particular this shows $RH^L(Y,\lambda)$ is independent of the almost-complex structure and the unfiltered homology $RH(Y,\lambda)$ depends only on the contact structure $\xi = \ker \lambda$. When $(X,\w)$ is only a weakly exact cobordism, we still obtain a filtered homomorphism
\[\Phi^L(X,\w): RH^L(Y_+,\lambda_+,0)\to RH^L(Y_-,\lambda_-,0)\]
on the $0\in H_1(Y)$ graded piece of rational SFT. 

For $(X,\w)$ a strong symplectic cobordism, there are additional complications arising from the possible presence of closed curves in the cobordism. However, as with ECH, we can restrict to counting curves representing the zero class in $H_2(X, \pd X)$ to define a filtered homomorphism
\[\Phi^L(X,\w,0): RH^L(Y_+,\lambda_+,0)\to RH^L(Y_-,\lambda_-,0).\]

We can also define an analogue of the $U$ map on $RH(Y,\lambda)$. Pick a generic point $y\in Y$. We form a distinguished element of the differential algebra $\mathfrak{D}$ given by
\[\mathbf{u}_y := \sum_{\Gamma_+,\Gamma_-,A} \frac{\#^{\mathrm{vir}}\left(\mathcal{M}_{0,1}^{\mathrm{ind}\,2n-2}(\R\times Y, J, A, \Gamma_+, \Gamma_-)\cap \mathrm{ev}^{-1}\{(0,y)\} \right)}{|\Gamma_+|!\,|\Gamma_-|!\,\kappa_{\Gamma_-}} q^{\Gamma_-} \pdv{q^{\Gamma_+}}. \]
The term $\#^{\mathrm{vir}}(\cdots)$ is the rational orbifold (virtual) count of genus-zero index $(2n-2)$ connected finite-energy $J$-holomorphic curves in $\R\times Y$ positively/negatively asymptotic to $\Gamma_\pm$ in the homology class $A$  with a single marked point mapping to $(0,y)\in \R\times Y$ modulo automorphisms.

Keeping track of how curves break in the moduli space of index $2n-1$ curves through $(0,y)$ implies that $\mathbf{h}$ and $\mathbf{u}_y$ Poisson-commute. As a consequence, $\mathbf{u}_y$ is a chain map with an induced map on homology
\[U_y^L: RH^L(Y,\lambda) \to RH^L(Y,\lambda) \quad \text{ for each $L>0$.}\]
A generic path between points $y, y' \in Y$ determines a chain homotopy between $U_y$ and $U_{y'}$, hence there is a single canonical map on homology when $Y$ is connected. For disconnected $(Y,\lambda)= (Y_1,\lambda_1)\sqcup \cdots \sqcup (Y_k,\lambda_k)$ and for a generic point $y\in Y_1$, under the injective map \eqref{eq:rsftdis} we obtain
\begin{equation}\label{eq:rsftucom}
U_{y} \circ j = j \circ U_{Y_1},	
\end{equation}
where $U_y$ is the $U$ map on $RH(Y,\lambda)$ associated to the component $Y_1$ and $U_{Y_1}$ is the $U$ map on $RH(Y_1,\lambda_1)$.
 
Given a connected (weakly) exact cobordism $(X,\w)$ from $(Y_+,\lambda_+)$ to $(Y_-, \lambda_-)$ and a pair of points $y_+\in Y_+$ and $y_- \in Y_-$, studying curves in the completion of $X$ with a point constraint along a path between $y_+$ and $y_-$ gives that
\begin{equation}\label{eq:rsftcob}
\Phi^L(X,\w) \circ U^L_{y_+} = U^L_{y_-} \circ \Phi^L(X,\w) \quad \text{for any $L>0$.}	
\end{equation}
The same holds for the map $\Phi^L(X,\w,0)$ when $(X,\w)$ is only a strong cobordism.

\subsection{RSFT capacities}
We can now associate a sequence of capacities to a Liouville domain $(X,\w)$ using this version of rational symplectic field theory:
\[0< c^{\mathrm{RSFT}}_1(X, \w) \leq c_2^{\mathrm{RSFT}}(X,\w) \leq  c_3^{\mathrm{RSFT}}(X,\w)\leq \cdots \leq \infty.\]
These are defined in close analogy with the ECH capacities.

Let $(Y,\lambda)$ be a non-degenerate contact manifold. First note there is a distinguished element in $\mathfrak{A}$ which is simply the unit of the symmetric algebra; we denote this by $\emptyset$. By Stokes' theorem, this is a cycle and so defines a (possibly trivial) class $[\emptyset] \in RH(Y,\lambda)$. 
\begin{lemma}\label{lem:rsftempty}
	Given a weakly exact symplectic cobordism $(X,\w)$ between non-degenerate contact manifolds $(Y_1,\lambda_1)$ and $(Y_2,\lambda_2)$, the RSFT cobordism map respects the unit:
	\[\Phi(X,\w) [\emptyset]=[\emptyset].\]
	For $(X,\w)$ a strong cobordism, $\Phi(X,\w,0)$ respects the unit too.
\end{lemma}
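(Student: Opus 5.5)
The plan is to compute the chain-level cobordism map $\phi(X,\w,J)$ on the unit $\emptyset\in\mathfrak{A}_{Y_1}$ directly, and show that $\phi(\emptyset)=\emptyset$ exactly, with no correction terms. In Hutchings' $q$-only formalism the chain map is obtained by exponentiating a generating element $\mathbf{f}$ that counts rational curves in the completion $\widehat{X}$. An operator term of the form $q^{\Gamma_-}\pd_{q^{\Gamma_+}}$ can only act nontrivially on $\emptyset$ if it contains no derivatives, i.e.\ if $\Gamma_+=\emptyset$. So $\phi(\emptyset)$ is $\emptyset$ plus a sum of terms, each weighted by a virtual count of genus-zero curves in $\widehat{X}$ with no positive punctures. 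The same structure holds for the $\R$-filtered maps $\phi^L$, since $\emptyset$ has action $0<L$.

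The key step is to rule out such curves. Suppose $u$ is a finite-energy $J$-holomorphic curve in $\widehat{X}$ with no positive ends and negative ends at an orbit list $\Gamma_-$. In the weakly exact case write $\hat{\w}=d\hat\lambda$ for a primitive $\hat\lambda$ that agrees with $e^t\lambda_i$ (up to a closed form) on the ends; the zero-homology-class restriction handles the closed-form ambiguity, since we only work in the $0\in H_1$ graded piece. Stokes' theorem gives
\[0\le \int_u \hat{\w} = -\mathcal{A}(\Gamma_-)\le 0.\]
Hence $u$ has zero $\hat\w$-energy, so it is constant. A constant curve cannot have a puncture, and nonconstant closed rational curves are excluded by exactness. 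Thus no nonconstant curves contribute, and $\phi(\emptyset)=\emptyset$ at the chain level, giving $\Phi(X,\w)[\emptyset]=[\emptyset]$.

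For a strong cobordism we restrict to curves in the class $0\in H_2(X,\pd X)$. The energy computation in the proof of the filtered ECH cobordism map proposition above (decomposing the current as $\pd\mathcal{B}+\mathcal{D}_1+\mathcal{D}_2$) applies verbatim. It again gives $\int_u\w=\mathcal{A}(\emptyset)-\mathcal{A}(\Gamma_-)\le 0$, forcing $u$ to be empty or constant. This includes closed curves in class $0$, which have zero energy.

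The main obstacle is making this rigorous at the virtual level. Under \Cref{assump:science_fiction} we need that virtual counts vanish on empty moduli spaces, and that the compactified moduli spaces with no positive ends are empty, including broken configurations. This follows because every level of a building has nonnegative energy and the total energy is $\le 0$. Symplectization levels with no positive ends are likewise empty by the Stokes argument for $d\lambda$. I would also check that the formalism has no "constant curve" contribution beyond the identity term. This is a normalization convention in \cite{HutRSFT} that I would simply cite.
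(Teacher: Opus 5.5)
Your proposal is correct and is essentially the paper's argument: Stokes' theorem (the chain-level map is action-decreasing, with curves restricted to the class $0\in H_2(X,\pd X)$ in the strong case) rules out nonconstant curves without positive ends, so only the empty curve contributes and $\phi(\emptyset)=\emptyset$. One small slip: if $\hat\w=d(e^t\lambda_2)$ on the negative end, then Stokes gives $\int_u\hat\w=0$ rather than $-\mathcal{A}(\Gamma_-)$, up to the closed-form term, which vanishes in the $0$-graded piece; the value $-\mathcal{A}(\Gamma_-)$ is the energy computed with $d\lambda_2$ on the end, as in the paper's filtered ECH proposition, but either version forces $u$ to be constant.
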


\begin{proof}
	By Stokes' theorem, the chain-level cobordism map $\phi(X,\w,J)$ is action-decreasing. Thus $\phi(X,\w,J)$ must send the empty orbit set of $(Y_1,\lambda_1)$ to the empty orbit set of $(Y_2, \lambda_2)$ by counting the empty holomorphic curve. For a strong cobrodism, the same applies to the map $\Phi(X,\w,0)$, since we count null-homologous curves.
\end{proof}
Note as a corollary of \eqref{eq:rsftcob} and \Cref{lem:rsftempty}, the (canonical) isomorphism $RH(Y,\lambda) \cong RH(Y,\lambda')$ for different contact forms is compatible with the $U$ map and preserves the class $[\emptyset]$.

\begin{definition}
The \emph{RSFT spectrum} of a non-degenerate contact manifold $(Y,\lambda)$ is the sequence of spectral invariants
\[0< c_1^\mathrm{RSFT}(Y,\lambda) \leq c_2^{\mathrm{RSFT}}(Y,\lambda)\leq c_3^\mathrm{RSFT}(Y,\lambda)\leq \cdots \leq \infty\]	
given by
\[c_k^\mathrm{RSFT}(Y,\lambda) := \inf\left\{L>0\,\Big|\,[\emptyset] \in \im\Big(U^k: RH^L(Y,\lambda)\to RH^L(Y,\lambda)\Big) \right\}.\]
When $Y$ is disconnected, this means that $[\emptyset]$ should be in the image of any $k$-fold composition of $U$ maps associated to the different components of $Y$. For $(Y,\lambda)$  degenerate, we define $c_k^{\mathrm{RSFT}}(Y,\lambda)$ as the limit of the spectral invariants of any non-degenerate sequence $\{\lambda_n\}_{n=1}^\infty$ of contact forms $C^\infty$-converging to $\lambda$.

We then define the \emph{RSFT capacities} of any weakly exact filling $(X,\w)$ of $(Y,\lambda)$ by
\[c_k^\mathrm{RSFT}(X,\w) := c_k^\mathrm{RSFT}(Y,\lambda).\]
\end{definition}

\begin{lemma}\label{lem:rsftcapprops}
The RSFT capacities satisfy the following properties.	
\begin{enumerate}[label=\textbf{\arabic*.}, leftmargin = 0.8cm]
		\item Under a symplectic embedding $(X,\w) \hookrightarrow (X',\w')$, the capacities are non-decreasing: 
		 \[c_k^{\mathrm{RSFT}}(X,\w) \leq c_k^{\mathrm{RSFT}}(X',\w')\quad  \text{for all $k$}.\]
		\item The capacities scale linearly with symplectic area: for any $s>0$,
		\[c_k^{\mathrm{RSFT}}(X,s\w) = s\cdot c_k^{\mathrm{RSFT}}(X,\w) \quad \text{for all k.}\]
		\item The capacities satisfy a disjoint union property: 
				\[c_k^{\mathrm{RSFT}}\left(\coprod_{i=1}^n(X_i,\w_i)\right) = \max_{k_1+\cdots+k_n=k} \sum_{i=1}^n c_{k_i}^{\mathrm{RSFT}}(X_i,\w_i).\]
	\end{enumerate}
\end{lemma}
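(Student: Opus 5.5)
The plan is to verify each of the three properties in \Cref{lem:rsftcapprops} directly from the structures already in place: the filtered cobordism maps $\Phi^L$, the unit property of \Cref{lem:rsftempty}, the $U$-compatibility \eqref{eq:rsftcob}, and the disjoint-union map \eqref{eq:rsftdis}. In each case the argument copies the corresponding ECH argument of Hutchings \cite{HutQ}, with ECH replaced by $q$-only rational SFT. Throughout I work first with non-degenerate boundaries and pass to degenerate ones by the limiting definition. This requires checking that $c_k^{\mathrm{RSFT}}$ is continuous under $C^0$-small rescalings $\lambda \mapsto e^f\lambda$. For that I use the exact cobordism between $(Y,e^{f}\lambda)$ and $(Y,\lambda)$ when $f\ge 0$, which gives $c_k(Y,\lambda)\le c_k(Y,e^f\lambda)\le e^{\max f}c_k(Y,\lambda)$.

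\emph{Monotonicity.} Let $(X,\w)\hookrightarrow(X',\w')$ be an embedding of weakly exact fillings into the interior. The complement $X'\setminus X$ is then a weakly exact cobordism from $(Y',\lambda')$ to $(Y,\lambda)$, and it has a filtered map $\Phi^L$ on the $0$-graded piece. Suppose $[\emptyset]=U^k\eta$ in $RH^L(Y',\lambda')$. Since $[\emptyset]$ and $U$ preserve $H_1$-grading, we may take $\eta$ to lie in the $0$-graded piece. Applying $\Phi^L$ together with \eqref{eq:rsftcob} (choosing a path from a point of $Y'$ to a point of each relevant component of $Y$) and \Cref{lem:rsftempty}, we get $[\emptyset]=U^k\Phi^L\eta$ in $RH^L(Y,\lambda)$. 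Hence $c_k(Y,\lambda)\le c_k(Y',\lambda')$.

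The main obstacle here is connectedness. If $X'\setminus X$ is disconnected, or if some component of $Y$ is not joined to the chosen basepoint, then \eqref{eq:rsftcob} only intertwines $U$ maps on components linked by a path. This is exactly the issue the $k$-fold composition convention in the definition is designed to handle. I would argue that, because $X'$ is connected, each component of $Y$ is joined to $Y'$ through the cobordism, so every $U$ map on $Y$ is intertwined with the single $U$ on $Y'$. For a general target $X'$, I would take the supremum over weakly exact subdomains as in the ECH definition, so that monotonicity holds tautologically.

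\emph{Scaling and disjoint union.} For scaling, I identify the complex of $(Y,s\lambda)$ with that of $(Y,\lambda)$, using $J$ pulled back by $t\mapsto t+\log s$. Actions multiply by $s$, so $RH^{sL}(Y,s\lambda)\cong RH^L(Y,\lambda)$ compatibly with $U$ and $[\emptyset]$. For disjoint unions, I use the map $j$ from \eqref{eq:rsftdis}. Since $j$ is injective with a filtration-preserving right inverse, and \eqref{eq:rsftucom} holds, the map is compatible with the $U_{Y_i}$. If $[\emptyset]=U_{Y_i}^{k_i}\eta_i$ in $RH^{L_i}(Y_i)$ for each $i$, then $j(\eta_1\otimes\cdots\otimes\eta_n)$ lies in action below $\sum L_i$ and maps to $[\emptyset]$ under $\prod U_{Y_i}^{k_i}$. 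This gives the inequality ``$\le$'' against the required max.

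For the reverse inequality, I would use the chain-level decomposition of $\mathfrak{A}_Y$ as a product of the $\mathfrak{A}_{Y_i}$ filtered by summed action. Under this decomposition, $\mathbf h$ and $\mathbf u_y$ split because curves in $\R\times Y$ are connected and lie in a single component. So $RH^L(Y)$ is a filtered K\"unneth product, and $[\emptyset]$ being in the image of every $k$-fold composition forces, for some split $k=\sum k_i$, the factors $[\emptyset_i]$ to lie in images of $U^{k_i}$ with total action at most $L$. I expect this K\"unneth step over $\Q$ to be routine but the most notationally delicate part.
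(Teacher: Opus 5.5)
Your treatment of monotonicity, scaling, and the inequality $c_k^{\mathrm{RSFT}}(\coprod_i X_i)\le\max_{k_1+\cdots+k_n=k}\sum_i c_{k_i}^{\mathrm{RSFT}}(X_i)$ follows the paper. That is: push a witness forward under the filtered map of the complementary cobordism, using $\Phi[\emptyset]=[\emptyset]$ and $U$-equivariance; identify $RH^L(Y,\lambda)\cong RH^{sL}(Y,s\lambda)$; and apply $j$ to $\eta_1\otimes\cdots\otimes\eta_n$. Two small corrections are needed.
\begin{itemize}
\item Translating by $\log s$ does not turn a $\lambda$-admissible $J$ into an $s\lambda$-admissible one, since $R_{s\lambda}=s^{-1}R_\lambda$. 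Rescale the $\R$-coordinate instead, or use the trivial cobordism as the paper does.
\item Every component of $X'\setminus X$ does meet $Y'$, but not because $X'$ is connected. A component $W_0$ with only concave boundary would be an exact cap, and Stokes gives $\int_{W_0}\w'\wedge\w'=-\int_{\partial W_0}\lambda\wedge d\lambda<0$ (with the contact orientation on $\partial W_0$), which is impossible.
\end{itemize}

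The genuine gap is in the reverse inequality. There is no chain-level splitting $\mathfrak A_Y\cong\mathfrak A_{Y_1}\otimes\cdots\otimes\mathfrak A_{Y_n}$. The space $\mathcal A_Y$ consists of all non-constant polynomials in the $q_\gamma$, so it contains mixed monomials $q_aq_b$ with $a$ an orbit in $Y_1$ and $b$ an orbit in $Y_2$. This is exactly why $j$ is only an injection, as you yourself used one step earlier. Nor does $\mathbf h$ respect any such splitting: a single connected curve in $\R\times Y_1$ with several positive ends multiplies together factors that may carry $Y_2$-variables. Write $\odot$ for the symmetric product in $\mathfrak A_Y$. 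A pair-of-pants term $q_c\,\partial_{q_a}\partial_{q_{a'}}$ in $\mathbf h$ contributes to $\mathbf h\cdot\big((q_aq_b)\odot(q_{a'}q_{b'})\big)$ a multiple of the single factor $q_cq_bq_{b'}$, fusing two $Y_2$-monomials. In $\mathfrak A_{Y_1}\otimes\mathfrak A_{Y_2}$ the corresponding term would be $q_c\otimes(q_b\odot q_{b'})$. So ``curves are connected'' does not make $RH^L(Y)$ a filtered K\"unneth product. The paper does not claim an isomorphism; it pushes a witness $x\in RH^L(Y)$ forward along the retraction $j^\vee$. However, its one-line justification that $j^\vee$ is a chain map runs into the same fusion phenomenon, so this direction needs a real argument in either formulation.

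Even granting a tensor splitting, the filtered K\"unneth step is the actual content, not notation. You must pass from $(U^{k_1}\otimes\cdots\otimes U^{k_n})\eta=[\emptyset]$ in the summed-action filtration at level $L$ to witnesses in $RH^{L_i}(Y_i)$ with $\sum L_i\le L$. This step cannot be purely formal, because it must use that $[\emptyset]\neq0$ in each $RH(Y_i)$, which holds for fillings. Indeed, if $\emptyset=\partial w$ in $\mathfrak A_{Y_2}$, then also $\emptyset=\partial w$ in $\mathfrak A_Y$, so $c_k(Y)\le\mathcal A(w)$ however large $c_k(Y_1)$ is. Finally, your quantifier is too weak. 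Concluding the statement ``for some split'' only bounds $c_k(Y)$ below by a \emph{minimum} over splits. To get the maximum, run the argument separately for every $(k_1,\ldots,k_n)$, using the composition $U_{Y_1}^{k_1}\cdots U_{Y_n}^{k_n}$. The definition allows this, since $[\emptyset]$ lies in the image of every $k$-fold composition.
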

\begin{proof} Let $(X,\w)$ and $(X',\w')$ have contact boundaries $(Y,\lambda)$ and $(Y',\lambda')$ respectively. By perturbations, we may assume the boundaries are non-degenerate.
\begin{enumerate}[label={\arabic*.}, leftmargin = 0.8cm]
	\item  Suppose $c_k^{\mathrm{RSFT}}(X',\w')< L$ so there is $\gamma \in RH^L(Y',\lambda')$ which satisfies $U^k\gamma=[\emptyset]$. Given a symplectic embedding $\psi: (X,\w)\hookrightarrow (X',\w')$, the complement of the image of $\psi$ defines a weakly exact  cobordism with an associated cobordism map
	\[\Phi^L(X'\setminus \psi(X),\w) : RH^L(Y',\lambda',0)\to RH^L(Y,\lambda,0).\]
	Then 
	\begin{align*}
		U^k\circ \Phi^L(X'\setminus \psi(X),\w)\gamma &= \Phi^L(X'\setminus \psi(X),\w)\circ U^k\gamma\\
		&=\Phi^L(X'\setminus \psi(X),\w)[\emptyset]\\
		&=[\emptyset].
	\end{align*}
	Hence $[\emptyset] \in \im(U^k)$ on $RH^L(Y,\lambda)$ and so $c_k^{\mathrm{RSFT}}(X,\w)\leq L$.
	\item This is immediate from the fact there is a canonical isomorphism
	 \[RH^L(Y,\lambda) \cong RH^{sL}(Y, s\lambda)\]
	 for $s>0$ and this isomorphism respects the element $[\emptyset]$ and intertwines the actions of $U$. One can obtain this isomorphism from the cobordism map associated to the truncated symplectization $([0,s]_t\times Y, \dd(e^t\lambda))$, which for a translation invariant almost-complex structure can only count trivial cylinder curves.
	 \item Let $(X_i,\w_i)$ have boundary $(Y_i,\lambda_i)$ and let $(Y,\lambda)$ denote the union of the components $(Y_i,\lambda_i)$. Consider the surjective map $j^\vee : \mathfrak{A}_Y \to \mathfrak{A}_{Y_1}\otimes \cdots\otimes \mathfrak{A}_{Y_n}$  which divides a symmetric product of generators into the pieces coming from the various connected components of $Y$. This is the chain level right inverse to the map $j$ from \eqref{eq:rsftdis}. Because the holomorphic curves counted by $\mathbf{h}$ are connected, $j^\vee$ is a chain map. For the same reason, $j^\vee$ will commute with the chain-level map $\mathbf{u}_y$ for $y$ in any connected component of $Y$. If one can find $x_i \in RH^{L_i}(Y_i,\lambda_i)$ so that $U_{Y_i}^{k_i}x_i = [\emptyset]$ for each $i$, then the relation \eqref{eq:rsftucom} implies 
	 \[U_{Y_n}^{k_n}\circ \cdots \circ U_{Y_1}^{k_1} \circ j(x_1\otimes\cdots \otimes x_n) = [\emptyset].\]  
	 Note $j$ is compatible with the action filtrations, in particular $j(x_1\otimes \cdots \otimes x_n) \in RH^{L_1+\cdots + L_n}(Y,\lambda)$. We conclude that
	 \[c_k^\mathrm{RSFT}(Y,\lambda) \leq \max_{k_1+\cdots+k_n=k} \sum_{i=1}^n c_{k_i}^{\mathrm{RSFT}}(Y_i,\lambda_i).\]
	 Applying the corresponding argument for $j^\vee$ will give 
	 \[c_k^\mathrm{RSFT}(Y,\lambda) \geq \max_{k_1+\cdots+k_n=k} \sum_{i=1}^n c_{k_i}^{\mathrm{RSFT}}(Y_i,\lambda_i).\]
\end{enumerate}
\end{proof}
We can extend our definition of the RSFT capacities to all other symplectic manifolds $(X,\w)$ by taking the capacities to be the supremum of the capacities of any weak Liouville domain which embeds into $(X,\w)$. \Cref{lem:rsftcapprops} then holds for all symplectic manifolds.
\begin{remark}
Siegel \cite{Sie} constructs a wide catalogue of symplectic capacities from rational symplectic field theory. In particular, he defines two sequences of capacities $\mathfrak{g}_1(X,\w), \mathfrak{g}_2(X,\w),\ldots$ and $\mathfrak{r}_1(X,\w),  \mathfrak{r}_2(X,\w),\ldots$ which measure the action of certain rational holomorphic curves satisfying tangency constraints and point constraints in $X$ respectively. The capacities $\mathfrak{r}_k(X,\w)$ are quite similar to our capacities $c_k^{\mathrm{RSFT}}(X,\w)$ but not the same in general. Our capacities measure rational holomorphic curves with point constraints that can be taken to live entirely in the symplectization of the boundary of $X$ (see \cite[Rmk.\!\!\!\! 5.3]{Sie}). Moreover, Siegel's capacities are defined in terms of certain spectral invariants coming from classes in the bar complex of linearized contact homology or the contact homology algebra, instead of $q$-only rational SFT.
\end{remark}

\begin{lemma}\label{lem:rsftcap}
Finiteness of the RSFT spectrum depends only on the contact structure. I.e. if $(Y,\lambda)$ is a contact manifold with underlying contact structure $\xi=\ker\lambda$ so that $c_k^{\mathrm{RSFT}}(Y,\lambda)=\infty$, then the same is true for every other  contact form for $\xi$.
\end{lemma}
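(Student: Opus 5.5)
Two contact forms for the same $\xi$ differ by a positive function: $\lambda' = e^f\lambda$ with $f\colon Y\to\R$. We may assume both are non-degenerate, since the degenerate case is a limit of non-degenerate ones and the conclusion passes to the limit. The plan is to compare the two RSFT spectra through exact cobordisms given by slices of the symplectization, and to use the naturality properties already established: the cobordism maps preserve $[\emptyset]$ (\Cref{lem:rsftempty}), commute with $U$ by \eqref{eq:rsftcob}, and are compatible with the action filtration.

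Choose a constant $s>0$ with $e^{s} > \max e^{f}$. In the symplectization $(\R_t\times Y, \dd(e^t\lambda))$, the graph $\{t=f\}$ carries the contact form $\lambda'$ and the graph $\{t=s\}$ carries $e^s\lambda$. The region between them is an exact cobordism $X$ from $(Y,e^s\lambda)$ to $(Y,\lambda')$. We then argue by contraposition: suppose $c_k^{\mathrm{RSFT}}(Y,\lambda')=\infty$ but $c_k^{\mathrm{RSFT}}(Y,\lambda)<\infty$.

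By the scaling property (part 2 of \Cref{lem:rsftcapprops}, whose proof applies verbatim at the level of contact forms), we get $c_k^{\mathrm{RSFT}}(Y,e^s\lambda)=e^s c_k^{\mathrm{RSFT}}(Y,\lambda)<\infty$. So for some $L$ there is a class $\gamma\in RH^L(Y,e^s\lambda)$ with $U^k\gamma=[\emptyset]$. Applying the filtered map $\Phi^L(X,\w)$ and the two naturality properties gives
\[U^k\,\Phi^L(X)\gamma=\Phi^L(X)\,U^k\gamma=\Phi^L(X)[\emptyset]=[\emptyset]\quad\text{in } RH^L(Y,\lambda').\]
Hence $c_k^{\mathrm{RSFT}}(Y,\lambda')\le L<\infty$, a contradiction. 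By symmetry, exchanging the roles of $\lambda$ and $\lambda'$ gives the converse.

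The only step needing care is whether exact cobordism maps exist and have these properties when both ends are connected copies of $Y$ with different forms. This is exactly the setting already used in the proof of monotonicity in \Cref{lem:rsftcapprops}, so under \Cref{assump:science_fiction} it is available. The other point to check is that the graph region really is an exact cobordism with the stated boundary forms, which holds because $e^t\lambda$ restricts to $e^f\lambda$ on the graph of $f$.
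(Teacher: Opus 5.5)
Your argument is correct and is essentially the paper's. The paper simply cites that the unfiltered $RH$, its $U$ map and the class $[\emptyset]$ depend only on $\xi$, an invariance produced by exactly the kind of cobordism maps you use; you instead apply the filtered map of the graph cobordism directly, which also yields the bound $c_k^{\mathrm{RSFT}}(Y,\lambda')\le e^{s}c_k^{\mathrm{RSFT}}(Y,\lambda)$ whenever $\lambda'<e^{s}\lambda$.

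The one step to make explicit is the case where the hypothesis form $\lambda$ is degenerate. There, $\lim_n c_k^{\mathrm{RSFT}}(Y,\lambda_n)=\infty$ does not by itself force any approximant to have infinite $c_k^{\mathrm{RSFT}}$, so ``the conclusion passes to the limit'' does not cover this direction. Your bound closes it, since it keeps finite values along an approximating sequence bounded; the paper instead applies monotonicity to a generic non-degenerate $e^{f}\lambda$ with $f\geq 0$.
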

\begin{proof}
	First assume $(Y,\lambda)$ is non-degenerate. By definition, finiteness of $c_k^{\mathrm{RSFT}}(Y,\lambda)$ is equivalent to $[\emptyset]$  lying in the image of $U^k$ on $RH(Y,\lambda)$. Since the $U$ map and the contact invariant depend only on $\xi=\ker \lambda$, the $k$th element of the RSFT spectrum is infinite for one non-degenerate contact form if and only if it is infinite for all.
	
	If $c_k^{\mathrm{RSFT}}(Y,\lambda)$ is infinite for all non-degenerate contact forms $\lambda$, continuity implies it will also be infinite for the degenerate contact forms. If $c_k^{\mathrm{RSFT}}(Y,\lambda)=\infty$ for some degenerate $\lambda$, then monotonicity implies $c_k^{\mathrm{RSFT}}(Y,e^{f}\lambda)=\infty$ for any non-negative function $f:Y\to \R_{\geq 0}$. For a generic such function, $e^f\lambda$ will be non-degenerate, implying $c_k^{\mathrm{RSFT}}$ is infinite for all contact forms.\end{proof}

\subsection{RSFT capacities for domains with disconnected boundary}
We now have a version of \Cref{thm:inf_spec_invts} for our new capacities.

\begin{proposition}\label{prop:rsft}
Suppose $(X,\w)$ is a  strong symplectic filling with convex boundary 
\[(Y_1,\lambda_1)\sqcup \cdots \sqcup (Y_n,\lambda_n)\qq{for}  n>1.\]
Then for $1\leq i \leq n$, the RSFT spectrum of $(Y_i,\lambda_i)$ is infinite. Hence the RSFT capacities of $(X,\w)$ and any weakly exact filling of $(Y_i,\ker\lambda_i)$ are infinite.
\end{proposition}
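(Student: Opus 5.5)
The plan is to transplant the proof of \Cref{thm:strong_inf} into rational SFT, using the RSFT versions of the formal properties: the filtered cobordism map $\Phi^L(X,\w,0)$, \Cref{lem:rsftempty}, the $U$-commutation \eqref{eq:rsftcob}, and the compatibility \eqref{eq:rsftucom} of the $U$ map with the disjoint-union map $j$ from \eqref{eq:rsftdis}. Assume for contradiction that $c_1^{\mathrm{RSFT}}(Y_1,\lambda_1)=L<\infty$, with the boundary perturbed to be non-degenerate. By definition there is then $\gamma\in RH^{L'}(Y_1,\lambda_1)$, for any $L'>L$ (we fix such an $L'$ slightly larger than $L$), with $U_{Y_1}\gamma=[\emptyset]$. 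Set $x=j(\gamma\otimes[\emptyset]\otimes\cdots\otimes[\emptyset])\in RH^{L'}(Y_+,\lambda_+)$. By \eqref{eq:rsftucom}, $U_{y_1}x=j([\emptyset]\otimes\cdots\otimes[\emptyset])=[\emptyset]$, where $y_1\in Y_1$.

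Next, as in the ECH proof, attach a long finite cylindrical end $[0,R]\times Y_2$ to $X$, obtaining $X_R$. Inside this end, embed a symplectic ball, perturb it to an irrational ellipsoid $E$, and enlarge it along the Liouville flow so that every Reeb orbit of $(\pd E,\lambda_\std)$ has action at least $2L'$. In higher dimension $2n$ we use $E(a_1,\dots,a_n)$ with all $a_i\ge 2L'$ and rationally independent. The convex boundary of $X_R$ still contains $(Y_1,\lambda_1)$, and the rescaling of $\lambda_2$ does not affect $x$, since $x$ involves only the empty word on $Y_2,\dots,Y_n$. The complement $X_R\setminus E$ is a strong cobordism from $Y_+^R$ to $\pd E$. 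Since $X_R$ is connected, we can choose a path from $y_1\in Y_1$ to a point $y_E\in\pd E$. Applying \eqref{eq:rsftcob} for the strong cobordism map together with \Cref{lem:rsftempty} gives
\[U_{y_E}\,\Phi^{L'}(X_R\setminus E,\w,0)\,x=\Phi^{L'}(X_R\setminus E,\w,0)\,U_{y_1}x=[\emptyset]\in RH^{L'}(\pd E,\lambda_\std,0).\]

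Finally, every Reeb orbit on $\pd E$ has action at least $2L'>L'$, so $\mathfrak{A}^{L'}$ for $\pd E$ is spanned by the unit $\emptyset$ alone. Hence $RH^{L'}(\pd E)=\Q\cdot[\emptyset]$, and $U_{y_E}$ vanishes on it because $\mathbf{u}_y$ has only terms with $\pd_{q^{\Gamma_+}}$ for nonempty $\Gamma_+$. So $[\emptyset]$ would lie in the image of the zero map. We must still check that $[\emptyset]\neq 0$ in $RH^{L'}(\pd E)$. This holds because the differential $\mathbf{h}$ kills every generator at this filtration level, as there are no others, so $[\emptyset]$ is not a boundary. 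This contradiction shows $c_1^{\mathrm{RSFT}}(Y_1,\lambda_1)=\infty$, hence $c_k^{\mathrm{RSFT}}=\infty$ for all $k$. By symmetry the same holds for every $Y_i$. \Cref{lem:rsftcap} extends this to all contact forms for $\ker\lambda_i$, which gives the statement for weakly exact fillings of $(Y_i,\ker\lambda_i)$. The disjoint union property of \Cref{lem:rsftcapprops}, which gives $c_k^{\mathrm{RSFT}}(Y_+)\ge c_k^{\mathrm{RSFT}}(Y_1)$, handles $(X,\w)$ itself.

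The main obstacle I expect is justifying \eqref{eq:rsftcob} for the non-exact strong cobordism $X_R\setminus E$ with a point-constraint path running through the interior, and checking that $\Phi^{L'}(\cdot,0)$ is filtered. I would argue this as in the ECH case: restricting to curves in the class $0\in H_2$ lets Stokes' theorem give the energy identity $\mathcal{A}(\Gamma_+)-\mathcal{A}(\Gamma_-)=\int\w\ge 0$. I would then appeal to \Cref{assump:science_fiction} for the virtual counts. A secondary point is that in higher dimension the ball enlargement inside the cylindrical end must be made precise. This works because $e^t\lambda_2$ is a Liouville form there and the flow expands volumes without bound as $R\to\infty$.
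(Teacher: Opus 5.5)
Your proposal is correct and follows the paper's proof essentially step for step. You assume $c_1^{\mathrm{RSFT}}(Y_1,\lambda_1)<\infty$ and push $j(\gamma)$ through the strong cobordism $X_R\setminus E$, where $E$ is a large irrational ellipsoid in a long collar of $Y_2$; then \eqref{eq:rsftucom}, \eqref{eq:rsftcob} and \Cref{lem:rsftempty} force $[\emptyset]$ into the image of the zero $U$-map on the truncated group of $\pd E$, which is $\Q\cdot[\emptyset]$. Your use of $L'>L$, which accounts for the infimum and the strict action filtration, and your explicit checks that $[\emptyset]\neq 0$ and $U_{y_E}[\emptyset]=0$ there, are small clarifications of points the paper leaves implicit.
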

\begin{proof}
The proof is a minor modification of that of \Cref{thm:strong_inf}. 	Consider a Liouville domain $(X,\w)$ with convex boundary
	\[(Y, \lambda) = (Y_1,\lambda_1)\sqcup\cdots \sqcup(Y_n,\lambda_n) \quad \text{for $n>1$}.\]
	
	 We may approximate to assume $(Y,\lambda)$ is non-degenerate. Suppose for contradiction that the RSFT spectrum of $(Y_1,\lambda_1)$ is finite with $c_1(Y_1,\lambda_1)=L<\infty$. We can then find an element $\gamma \in RH^L(Y_1,\lambda_1)$ with $U\gamma = [\emptyset]$. 
	 
	 First we enlarge our cobordism $X$ by gluing on a cylindrical end to form
	\[ (X_R,\w_R) := (X,\w)  \bigcup_{\pd X=\{0\}\times Y_2} \Big([0,R]_t \times Y_2, \dd(e^t\lambda_2)\Big)\]
	with $R$ large enough so that one can find an irrational ellipsoid $E$ which symplectically embeds into $X_R$ whose shortest Reeb orbit has action $2L$. Then $X_R$ has convex boundary  
	\[(Y^R_+, \lambda_+) = (Y_1,\lambda_1)\sqcup (Y_2,e^R\lambda_1)\sqcup\cdots \sqcup(Y_n,\lambda_n).\]
 The complement of $E$ inside $X_R$ is a strong symplectic cobordism which induces a cobordism map
	\[\Phi^L(X_R\setminus E,\w,0): RH^L(Y^R_+, \lambda_+,0) \to RH^L(\pd E, \lambda_{\std},0).\]
	 Let $U_1$ denote the $U$ map on $RH(Y,\lambda)$ associated to a point in $Y_1$ and $U$ the $U$ map on $RH(Y_1,\lambda_1)$. In light of \eqref{eq:rsftucom} and the fact $j$ is induced from an inclusion of algebras, 
	 \begin{equation*}
	 	U_1^L\circ j(\gamma)= j\circ U_1^L\gamma= j([\emptyset])=[\emptyset].
	 \end{equation*}
	Let $U_E$ denote the $U$ map on $RH(\pd E,\lambda_\std)$. In light of \eqref{eq:rsftcob} and \Cref{lem:rsftempty},
	\begin{align*}
		U_E^L \circ \Phi^L(X\setminus E, \w,0)\circ j(\gamma)
		 &= \Phi^L(X\setminus E, \w,0)\circ U_{1}^L\circ j(\gamma)\\
		&= \Phi^L(X\setminus E,\w,0)[\emptyset]\\
		&= [\emptyset].
	\end{align*}
	Hence
	\[[\emptyset] \in \im\bigg(U_E^L: RH^L(\pd E,\lambda_\std)\to RH^L(\pd E, \lambda_\std)\bigg).\]
	But the shortest Reeb orbit of $\pd E$ has length $2L$. So $RH^L(\pd E, 0)$ is isomorphic to $\Q$, generated by $[\emptyset]$, with a trivial $U_E$ action. We thus obtain a contradiction. 
	
	By permuting the ends, we see the RSFT spectrum is infinite for each boundary component $(Y_i,\lambda_i)$. If $(X_i,\w_i)$ is a weak filling of $(Y_i,\tilde{\lambda_i})$, and $\tilde{\lambda_i}$ is any contact form for $\ker(\lambda_i)$, then $c_k^\mathrm{RSFT}(X_i,\w_i) = c_k^\mathrm{RSFT}(Y_i,\tilde{\lambda_i})=\infty$,
	by \Cref{lem:rsftcap}. Lastly, from the proof of the disjoint union property in \Cref{lem:rsftcapprops},
	\[c_k^\mathrm{RSFT}(X,\w)= c_k^\mathrm{RSFT}(Y,\lambda) \geq c_k^\mathrm{RSFT}(Y_1,\lambda_1)=\infty. \]
\end{proof}

\begin{corollary}
	Let $\Sigma_g$ be a closed surface of genus $g\geq 2$. Then the RSFT capacities of a cotangent disk bundle $D^\ast\Sigma_g$ with its standard Liouville structure are infinite.
\end{corollary}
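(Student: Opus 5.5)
The plan is to deduce the corollary directly from \Cref{prop:rsft}, applied to McDuff's Anosov Liouville domain, in the same way \Cref{thm:cot_inf} was deduced from \Cref{thm:inf_spec_invts}.

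First, fix a hyperbolic metric on $\Sigma_g$, which exists since $g\geq 2$. McDuff's construction recalled in \Cref{ssec:AL} then gives an exact (in particular strong) filling $A^\ast\Sigma_g\cong I\times S^\ast\Sigma_g$ of
\[(S^\ast\Sigma_g,\xi_\can)\sqcup(-S^\ast\Sigma_g,\xi_\pre).\]
Near the outer end the Liouville form restricts to a positive multiple of $\lambda_\can$, and the boundary has two components. \Cref{prop:rsft} therefore shows that the RSFT spectrum of $(S^\ast\Sigma_g,c\lambda_\can)$ is infinite, where $c>0$ is the constant appearing at the outer end.

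Next, I pass to the cotangent disk bundle. By \Cref{lem:rsftcap}, finiteness of the RSFT spectrum depends only on the contact structure $\xi_\can$. Hence $c_k^{\mathrm{RSFT}}(S^\ast\Sigma_g,\lambda)=\infty$ for every contact form $\lambda$ defining $\xi_\can$. In particular this holds for the restriction of $\lambda_\can$ to the unit cotangent bundle of any metric, or to the boundary of any fibrewise star-shaped disk bundle with the standard Liouville structure. Since $(D^\ast\Sigma_g,\dd\lambda_\can)$ is a Liouville filling of that boundary, the definition of RSFT capacities gives $c_k^{\mathrm{RSFT}}(D^\ast\Sigma_g)=\infty$ for all $k\geq1$.

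Alternatively, one can argue by monotonicity, using property 1 of \Cref{lem:rsftcapprops}. After rescaling, the outer piece of McDuff's annulus bundle symplectically contains a region that is a Liouville collar, so one can compare directly. The route through \Cref{lem:rsftcap} is cleaner, though.

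The only step requiring care is checking that the convex boundary component of McDuff's domain really carries a contact form for $\xi_\can$, so that \Cref{lem:rsftcap} applies. At large radius the form $\lambda_\can$ dominates the magnetic term, and the induced contact structure on the outer sphere bundle is isotopic to $\xi_\can$. By Gray stability, the RSFT of this boundary is canonically identified with that of $(S^\ast\Sigma_g,\xi_\can)$, compatibly with $U$ and $[\emptyset]$. This is the point I would verify explicitly. Everything else is a formal consequence of \Cref{prop:rsft}, working under \Cref{assump:science_fiction}.
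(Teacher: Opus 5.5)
Your proposal is correct and is essentially the paper's argument. The paper deduces the corollary by applying \Cref{prop:rsft} to McDuff's Anosov Liouville domain and then using that finiteness of the RSFT spectrum depends only on the contact structure (\Cref{lem:rsftcap}), exactly as \Cref{thm:cot_inf} is deduced from \Cref{thm:inf_spec_invts}. Your Gray-stability check, that the outer boundary carries a contact form whose kernel is isotopic to $\xi_\can$, is a detail the paper glosses over.

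Discarding the monotonicity aside is right: as sketched it does not yield the needed inequality. Monotonicity would require a domain with infinite capacities embedded in $D^\ast\Sigma_g$, not a collar embedded in McDuff's domain.
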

\begin{corollary}
	If $(X,\w)$ is a symplectic 4-manifold whose first RSFT capacity is finite, then $X$ contains no closed embedded Lagrangian surface of genus $g\geq 2$.
\end{corollary}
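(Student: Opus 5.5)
The final statement is the corollary: if the first RSFT capacity of a symplectic 4-manifold $(X,\w)$ is finite, then $X$ contains no closed embedded Lagrangian surface of genus $g\geq 2$. I would argue by contradiction, in parallel with \Cref{cor:Lags}, using the preceding corollary that the RSFT capacities of $D^\ast\Sigma_g$ are infinite for $g\geq 2$.

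Suppose $L\subset X$ is such a Lagrangian. The Weinstein neighbourhood theorem gives, for some small $\varepsilon>0$, a symplectic embedding
\[(D^\ast L,\varepsilon\w_\can)\hookrightarrow (X,\w),\]
where the disk bundle is taken with respect to a hyperbolic metric on $L$. Its image is a tubular neighbourhood of $L$.

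By the extension of the RSFT capacities to arbitrary symplectic manifolds (supremum over embedded weak Liouville domains), monotonicity in \Cref{lem:rsftcapprops} holds for all symplectic embeddings. Together with the scaling property, this gives
\[\varepsilon\, c_1^{\mathrm{RSFT}}(D^\ast L,\w_\can)\leq c_1^{\mathrm{RSFT}}(X,\w)<\infty.\]

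The preceding corollary supplies the contradiction. Its proof applies \Cref{prop:rsft} to McDuff's Anosov Liouville domain $A^\ast L$, which has disconnected boundary $(S^\ast L,\xi_\can)\sqcup(-S^\ast L,\xi_\pre)$. This shows the RSFT spectrum of $(S^\ast L,\xi_\can)$ is infinite for every contact form, using \Cref{lem:rsftcap}. Hence $c_1^{\mathrm{RSFT}}(D^\ast L,\w_\can)=\infty$. Since $\varepsilon>0$, the left-hand side above is infinite, contradicting finiteness of $c_1^{\mathrm{RSFT}}(X,\w)$.

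The main point needing care is orientability. The statement as written does not say "oriented", but the surface $L$ must be oriented for two reasons. McDuff's construction needs an oriented hyperbolic surface to produce the prequantization bundle as a second boundary component. Also, the ECH analogue fails for non-orientable surfaces (\Cref{prop:non-orient}), and the same examples should behave the same way for RSFT. So I would read "genus $g\geq 2$" as referring to an orientable surface, and if needed state the orientation hypothesis explicitly, as in \Cref{cor:Lags}.

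Two further points also need checking. First, monotonicity must hold for non-exact embeddings into arbitrary $X$; this follows directly from the supremum definition, since any weak Liouville domain inside $D^\ast L$ also embeds in $X$. Second, the choice of metric on $L$ is irrelevant, by the rescaling remark following \Cref{thm:cot_inf}.
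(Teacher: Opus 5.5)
Your proposal is correct and follows the same route the paper intends, in exact parallel with \Cref{cor:Lags}: a Weinstein-neighbourhood embedding of a small $D^\ast L$, monotonicity and scaling of the RSFT capacities, and infinitude of $c_1^{\mathrm{RSFT}}(D^\ast\Sigma_g)$ obtained from \Cref{prop:rsft} applied to McDuff's domain. Your orientability caveat is also right: Givental's non-orientable Lagrangians lie in a ball, whose first RSFT capacity is finite by the ellipsoid computation at the end of the appendix, so the corollary needs $L$ to be oriented, just as \Cref{cor:Lags} states explicitly.
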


In higher dimensions, \Cref{prop:rsft} should already be non-trivial for the examples of Geiges and Massot--Niedekr\"uger--Wendl \cite{Gei6,MNW}. 

The following is well known \cite{McD} but we explain how to prove it within our framework. It will also provide a sample computation of the capacities, which recovers the Gromov non-squeezing theorem. Similar restrictions on co-fillings have also been obtained using RSFT by Moreno--Zhou \cite{MZ2}.
\begin{corollary}
	If a closed contact $2n-1$ manifold $(Y, \lambda)$ embeds as a contact-type hypersurface in $\R^{2n}$, then it is not strongly co-fillable.
\end{corollary}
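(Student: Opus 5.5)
The plan is to argue by contradiction using \Cref{prop:rsft}. Suppose $(Y,\lambda)$ sits as a contact-type hypersurface in $(\R^{2n},\w_\std)$ and that $(Y,\xi)$, with $\xi=\ker\lambda$, is strongly co-fillable. Then there is a contact manifold $(Y',\xi')$, non-empty, such that $(Y,\xi)\sqcup(Y',\xi')$ has a connected strong filling $(W,\w_W)$. Since $W$ has disconnected convex boundary, \Cref{prop:rsft} gives $c_k^{\mathrm{RSFT}}(Y,\lambda')=\infty$ for every $k$ and every contact form $\lambda'$ for $\xi$; by \Cref{lem:rsftcap}, finiteness of the spectrum depends only on $\xi$. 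So it suffices to show that $c_1^{\mathrm{RSFT}}(Y,\lambda)<\infty$ for the form induced from the embedding.

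Being contact type, $Y$ is compact and separates $\R^{2n}$. Let $D$ be the bounded component it encloses. The Liouville vector field defined near $Y$ points out of $D$, because $Y$ is oriented as the boundary of $D$ with $\lambda\wedge(\dd\lambda)^{n-1}>0$. Hence $(D,\w_\std)$ is a weakly exact filling of $(Y,\lambda)$, since $\w_\std$ is exact on $D$, and by definition $c_k^{\mathrm{RSFT}}(D,\w_\std)=c_k^{\mathrm{RSFT}}(Y,\lambda)$. Because $D$ is compact, it lies in some large round ball $B^{2n}(R)$. Monotonicity from \Cref{lem:rsftcapprops}, applied to the inclusion $D\hookrightarrow B^{2n}(R)$ (whose complement is a weakly exact cobordism), gives $c_1^{\mathrm{RSFT}}(Y,\lambda)\le c_1^{\mathrm{RSFT}}(B^{2n}(R))$. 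To avoid any orientation subtlety with $Y$ one may instead use $D\subset B^{2n}(R)$ directly, where $D$ is a genuine weak Liouville domain.

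The remaining step, and the main obstacle, is to show that $c_1^{\mathrm{RSFT}}$ of a ball, or of an irrational ellipsoid $E(a,b,\dots)\supset B$ perturbed to be non-degenerate, is finite. This amounts to computing that $[\emptyset]=U\gamma$ for some $\gamma\in RH^L(\pd E)$. The natural candidate is $\gamma=q_{\gamma_1}$, where $\gamma_1$ is the shortest Reeb orbit. Every orbit of $\pd E$ has even $\mu_{\mathrm{CZ}}$ up to the shift, so all generators have the same parity and the differential $\pd$ vanishes for grading reasons. The class $[q_{\gamma_1}]$ is therefore nonzero, and $\mathbf{u}_y q_{\gamma_1}$ counts index $2n-2$ planes asymptotic to $\gamma_1$ through a generic point. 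This count is the standard count of one, as in the Gutt--Hutchings or Siegel computation for ellipsoids: in the $q$-only formalism only curves with negative ends in the fixed orbit set contribute, and by action only planes with positive end $\gamma_1$ can appear. I would first try to make the plane count rigorous by a direct transversality argument in the integrable case, or by quoting the known fact that $\mathfrak{r}_1(E)=a_{\min}$ and the comparison in \cite[Rmk.\ 5.3]{Sie}. Granting this, $c_1^{\mathrm{RSFT}}(B^{2n}(R))\le \pi R^2<\infty$, which recovers non-squeezing-type bounds and contradicts the infiniteness above. Hence $(Y,\xi)$ is not strongly co-fillable.
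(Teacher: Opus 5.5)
Your strategy is essentially the paper's, reorganized. The paper glues $B^{2n}(R)\setminus D$ onto the co-filling to get a strong co-filling of $(S^{2n-1},\xi_\std)$ and applies \Cref{prop:rsft} to the sphere. You apply \Cref{prop:rsft} to $Y$ itself and use monotonicity for $D\subset B^{2n}(R)$. Monotonicity is proved with the cobordism map of that same complement, so the two routes are equivalent, and yours avoids the gluing.

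There is one genuine gap. Your claim that the Liouville field $V$ points out of $D$ ``because $Y$ is oriented as the boundary of $D$ with $\lambda\wedge(\dd\lambda)^{n-1}>0$'' is circular. That the contact orientation agrees with the boundary orientation of $D$ is exactly what has to be shown. Your fallback, that $D$ is a genuine weak Liouville domain, restates the same claim. The claim is true for connected $Y$, and Stokes proves it. Take $\lambda=\iota_V\w_\std|_Y$. Then $\dd\lambda=\w_\std|_Y=\dd(\lambda_\std|_Y)$, so $\lambda-\lambda_\std|_Y$ is closed. Since $\w_\std^{n-1}|_Y$ is exact, the closed part integrates to zero against it, giving
\[\int_{\pd D}\lambda\wedge(\dd\lambda)^{n-1}=\int_{\pd D}\lambda_\std\wedge(\dd\lambda_\std)^{n-1}=\int_D\w_\std^n>0.\]
Because $\iota_V\w_\std^n=n\,\lambda\wedge(\dd\lambda)^{n-1}$ on $TY$, this forces $V$ to point outward. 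Alternatively, treat the inward case as the paper does. There $B^{2n}(R)\setminus D$ is itself a connected strong filling of $(S^{2n-1},\xi_\std)\sqcup(Y,\lambda)$, and \Cref{prop:rsft} contradicts finiteness of $c_1^{\mathrm{RSFT}}$ for the ball directly.

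The step you grant, that the point-constrained count of planes asymptotic to $\gamma_1$ is nonzero, is where the paper does its real work.
\begin{itemize}
\item Your citation route is legitimate and matches the paper's first move. By \cite[Rmk.\ 5.3]{Sie}, $U[q_{\gamma_1}]\neq0$ if and only if the point-constrained augmentation $\epsilon_E\langle p\rangle$ is nonzero.
\item The paper then proves this nonvanishing instead of quoting it. It embeds a rescaled $E$ in $\C P^n$ with complement $W$ and pairs $\epsilon_E\langle p\rangle$ with the Cieliebak--Latschev element $\mathfrak{cl}_{W,H,1}$. A gluing argument then gives $\epsilon_E\langle p\rangle(\mathfrak{cl}_{W,H,1})=\mathrm{GW}_{0,2}(\C P^n,H;\mathrm{PD}(\mathrm{pt}),\mathrm{PD}(\mathrm{pt}))=1$.
\item Your other route, direct transversality for the integrable structure, is less immediate than it sounds. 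Under $\R\times\pd E\cong\C^n\setminus0$, the standard complex structure sends $\pd_t$ to a multiple of $\sum_i\pd_{\theta_i}$. This is not parallel to the Reeb field $\sum_i\frac{2\pi}{a_i}\pd_{\theta_i}$ of a non-round ellipsoid, so that structure is not admissible. The round sphere, where it would be, is degenerate.
\end{itemize}
With the orientation fix and this input, under \Cref{assump:science_fiction} as in the paper, your argument is complete.
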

\begin{proof}
	Suppose a hypersurface $(Y,\lambda)$ was strongly co-fillable. Necessarily, $(Y,\lambda)$ bounds a compact subdomain $(X,\w)$ of $\R^{2n}$. And $(X,\w)$ must be contained in some large ball $(B^{2n}(R), \w)$ inside $\R^{2n}$. The complement $B^{2n}(R) \setminus X$ defines a weakly exact cobordism either from $(S^{2n-1},\xi_\std)\sqcup (Y,\lambda)$ to $\varnothing$ or from $(S^{2n-1}, \xi_{\std})$ to $(Y,\lambda)$. In the first case, this is a strong co-filling of $(S^{2n-1},\xi_\std)$. In the second case, composing with the strong co-filling of $(Y,\lambda)$ gives a strong co-filling of $(S^{2n-1},\xi_\std)$. Hence it suffices to show $(S^{2n-1},\xi_\std)$ has no strong co-filling. 
	
	In light of \Cref{prop:rsft}, we just need to show the first RSFT capacity of some star-shaped domain is finite. For this we may consider an ellipsoid 
	\[E:= E(a_1,\ldots, a_n) = \left\{z \in \C^n \bigg| \sum_{i=1}^n \frac{\pi |z_i|^2}{a_i} \leq 1\right\},\]
	where $0< a_1 <a_2 < \cdots < a_n$ form a linearly independent set over $\Q$. Then $(\pd E, \lambda_\std)$ is a non-degenerate contact manifold and it has $n$ simple Reeb orbits $\gamma_1,\ldots, \gamma_n$ of periods $a_1,\ldots, a_n$ respectively. All the simple orbits and their covers have distinct odd Conley--Zehnder indices, so all Reeb orbits $\gamma$ are good and their corresponding variables $q_\gamma$ have even grading. 
	
	Let $\mathcal{A}$ be the ideal of non-constant polynomials inside the polynomial algebra generated by variables $q_{\gamma}$ for $\gamma$ a $k$-fold cover of $\gamma_i$ with $i\in \{1,\ldots,  n\}$ and $k$ any positive integer, and $\mathfrak{A} = \mathrm{Sym}(\mathcal{A})$. The RSFT differential is trivial for grading reasons and so $RH(\pd E, \lambda_\std)= \mathfrak{A}$. To show the first RSFT capacity is finite, it suffices to show $[\emptyset]$ is in the image of the $U$ map on $RH(\pd E, \lambda_\std)$. For grading and index reasons, the only homogeneous element which can have $[\emptyset]$ in its image is $[q_{\gamma_1}]$ and $U[q_{\gamma_1}]$ is always some multiple of $[\emptyset]$. We need to know it is a non-zero multiple, i.e. the algebraic count of holomorphic planes  asymptotic to $\gamma_1$ and satisfying a point constraint is non-zero.
	
	There is a similar augmentation  
	\[\epsilon_E \langle p \rangle : \mathrm{CHA}(\pd E, \lambda_\std) \to \Q \]
	defined on the contact homology algebra of $\pd E$ which counts holomorphic planes in the ellipsoid $E$ with a point constraint. For the same index and grading reasons, the contact homology algebra is formal (i.e. the differential is trivial) and the augmentation only counts holomorphic planes positively asymptotic to $\gamma_1$ with a point constraint. As explained by Siegel \cite[Rmk.\!\! 5.3]{Sie}, counting holomorphic planes in the ellipsoid $E$ is equivalent to counting planes in the symplectization $\R\times \pd E$. Hence, assuming a coherent system of virtual perturbations for all rational curve counts, $U[q_{\gamma_1}]$ is non-zero if and only if the augmentation $ \epsilon_E \langle p \rangle $ is non-trivial. 
	
	So to see $c_1^{\mathrm{RSFT}}(E,\w)$ is finite, we just need to know $\epsilon_E\langle p\rangle$ is non-trival. Consider an embedding of $E$ (appropriately scaled) into $\C P^n$ and let $W$ denote its complement. We can define a Cieliebak--Latschev element $\mathfrak{cl}_{W,H,1}\in\mathrm{CHA}(\pd E,\lambda_\std)$, as in \cite[\S 4]{Sie}, which counts point-constrained curves in $W$. More precisely, fix a cobordism-compatible almost-complex structure $J$ on the completion $\widehat{W}$ and a point $p\in \widehat{W}$. Then
	\[\mathfrak{cl}_{W,H, 1}: = \sum_{\Gamma} \frac{1}{\kappa_{\Gamma}}\#^\mathrm{vir}\left(\big\{ u\in \mathcal{M}^\mathrm{ind\, 2n-2}_{0,1} (\widehat{W}, J, H, \varnothing, \Gamma)| \mathrm{ev}_1(u) = p\big\}\right) \Gamma,\]
	where we sum over lists of orbits $\Gamma$ respecting a chosen ordering, $H\in H_2(\C P^n)$ denotes the class of a complex line, and the term $\#^\mathrm{vir}(\cdots )\in\Q$ is the orbifold count of genus-zero index $2n-2$ finite energy $J$-holomorphic curves in $\widehat{W}$ with no positive ends, negatively asymptotic to $\Gamma$, representing the class $H$, and with a marked point mapping to $p$. By standard arguments, this is a cycle and its homology class is independent of the choice of $p$ and $J$ (this is trivial in our case because the contact homology algebra is formal). 
	
	One can apply the above augmentation $\epsilon_E\langle p\rangle$ to this Cieliebak--Latschev element to obtain a rational number. By a standard gluing argument, we see that
	\[\epsilon_E\langle  p\rangle (\mathfrak{cl}_{W,H,1}) = \mathrm{GW}_{0,2} (\C P^n, H;\mathrm{PD}(\mathrm{pt}),\mathrm{PD}(\mathrm{pt})).\]
	The righthand side of the equality is the Gromov--Witten invariant of $\C P^n$ counting rational curves representing the homology class $H\in H_2(\C P^n)$ with two point constraints. Because every two points in $\C P^n$ define a unique complex line, and the standard integrable almost-complex structure satisfies the criteria for automatic transversality, this Gromov--Witten invariant equals one. Thus, the map $\epsilon_E\langle p\rangle$ cannot be zero. We deduce
	\[c_1^\mathrm{RSFT}(E(a_1,\ldots,a_n),\w_\std)=a_1<\infty.\]
	\end{proof}
	 

\end{document}